\definecolor{AliceBlue}{rgb}{0.94,0.97,1.00}
\definecolor{AntiqueWhite1}{rgb}{1.00,0.94,0.86}
\definecolor{AntiqueWhite2}{rgb}{0.93,0.87,0.80}
\definecolor{AntiqueWhite3}{rgb}{0.80,0.75,0.69}
\definecolor{AntiqueWhite4}{rgb}{0.55,0.51,0.47}
\definecolor{AntiqueWhite}{rgb}{0.98,0.92,0.84}
\definecolor{BlanchedAlmond}{rgb}{1.00,0.92,0.80}
\definecolor{BlueViolet}{rgb}{0.54,0.17,0.89}
\definecolor{CadetBlue1}{rgb}{0.60,0.96,1.00}
\definecolor{CadetBlue2}{rgb}{0.56,0.90,0.93}
\definecolor{CadetBlue3}{rgb}{0.48,0.77,0.80}
\definecolor{CadetBlue4}{rgb}{0.33,0.53,0.55}
\definecolor{CadetBlue}{rgb}{0.37,0.62,0.63}
\definecolor{CornflowerBlue}{rgb}{0.39,0.58,0.93}
\definecolor{DarkBlue}{rgb}{0.00,0.00,0.55}
\definecolor{DarkCyan}{rgb}{0.00,0.55,0.55}
\definecolor{DarkGoldenrod1}{rgb}{1.00,0.73,0.06}
\definecolor{DarkGoldenrod2}{rgb}{0.93,0.68,0.05}
\definecolor{DarkGoldenrod3}{rgb}{0.80,0.58,0.05}
\definecolor{DarkGoldenrod4}{rgb}{0.55,0.40,0.03}
\definecolor{DarkGoldenrod}{rgb}{0.72,0.53,0.04}
\definecolor{DarkGray}{rgb}{0.66,0.66,0.66}
\definecolor{DarkGreen}{rgb}{0.00,0.39,0.00}
\definecolor{DarkGrey}{rgb}{0.66,0.66,0.66}
\definecolor{DarkKhaki}{rgb}{0.74,0.72,0.42}
\definecolor{DarkMagenta}{rgb}{0.55,0.00,0.55}
\definecolor{DarkOliveGreen1}{rgb}{0.79,1.00,0.44}
\definecolor{DarkOliveGreen2}{rgb}{0.74,0.93,0.41}
\definecolor{DarkOliveGreen3}{rgb}{0.64,0.80,0.35}
\definecolor{DarkOliveGreen4}{rgb}{0.43,0.55,0.24}
\definecolor{DarkOliveGreen}{rgb}{0.33,0.42,0.18}
\definecolor{DarkOrange1}{rgb}{1.00,0.50,0.00}
\definecolor{DarkOrange2}{rgb}{0.93,0.46,0.00}
\definecolor{DarkOrange3}{rgb}{0.80,0.40,0.00}
\definecolor{DarkOrange4}{rgb}{0.55,0.27,0.00}
\definecolor{DarkOrange}{rgb}{1.00,0.55,0.00}
\definecolor{DarkOrchid1}{rgb}{0.75,0.24,1.00}
\definecolor{DarkOrchid2}{rgb}{0.70,0.23,0.93}
\definecolor{DarkOrchid3}{rgb}{0.60,0.20,0.80}
\definecolor{DarkOrchid4}{rgb}{0.41,0.13,0.55}
\definecolor{DarkOrchid}{rgb}{0.60,0.20,0.80}
\definecolor{DarkRed}{rgb}{0.55,0.00,0.00}
\definecolor{DarkSalmon}{rgb}{0.91,0.59,0.48}
\definecolor{DarkSeaGreen1}{rgb}{0.76,1.00,0.76}
\definecolor{DarkSeaGreen2}{rgb}{0.71,0.93,0.71}
\definecolor{DarkSeaGreen3}{rgb}{0.61,0.80,0.61}
\definecolor{DarkSeaGreen4}{rgb}{0.41,0.55,0.41}
\definecolor{DarkSeaGreen}{rgb}{0.56,0.74,0.56}
\definecolor{DarkSlateBlue}{rgb}{0.28,0.24,0.55}
\definecolor{DarkSlateGray1}{rgb}{0.59,1.00,1.00}
\definecolor{DarkSlateGray2}{rgb}{0.55,0.93,0.93}
\definecolor{DarkSlateGray3}{rgb}{0.47,0.80,0.80}
\definecolor{DarkSlateGray4}{rgb}{0.32,0.55,0.55}
\definecolor{DarkSlateGray}{rgb}{0.18,0.31,0.31}
\definecolor{DarkSlateGrey}{rgb}{0.18,0.31,0.31}
\definecolor{DarkTurquoise}{rgb}{0.00,0.81,0.82}
\definecolor{DarkViolet}{rgb}{0.58,0.00,0.83}
\definecolor{DeepPink1}{rgb}{1.00,0.08,0.58}
\definecolor{DeepPink2}{rgb}{0.93,0.07,0.54}
\definecolor{DeepPink3}{rgb}{0.80,0.06,0.46}
\definecolor{DeepPink4}{rgb}{0.55,0.04,0.31}
\definecolor{DeepPink}{rgb}{1.00,0.08,0.58}
\definecolor{DeepSkyBlue1}{rgb}{0.00,0.75,1.00}
\definecolor{DeepSkyBlue2}{rgb}{0.00,0.70,0.93}
\definecolor{DeepSkyBlue3}{rgb}{0.00,0.60,0.80}
\definecolor{DeepSkyBlue4}{rgb}{0.00,0.41,0.55}
\definecolor{DeepSkyBlue}{rgb}{0.00,0.75,1.00}
\definecolor{DimGray}{rgb}{0.41,0.41,0.41}
\definecolor{DimGrey}{rgb}{0.41,0.41,0.41}
\definecolor{DodgerBlue1}{rgb}{0.12,0.56,1.00}
\definecolor{DodgerBlue2}{rgb}{0.11,0.53,0.93}
\definecolor{DodgerBlue3}{rgb}{0.09,0.45,0.80}
\definecolor{DodgerBlue4}{rgb}{0.06,0.31,0.55}
\definecolor{DodgerBlue}{rgb}{0.12,0.56,1.00}
\definecolor{FloralWhite}{rgb}{1.00,0.98,0.94}
\definecolor{ForestGreen}{rgb}{0.13,0.55,0.13}
\definecolor{GhostWhite}{rgb}{0.97,0.97,1.00}
\definecolor{GreenYellow}{rgb}{0.68,1.00,0.18}
\definecolor{HotPink1}{rgb}{1.00,0.43,0.71}
\definecolor{HotPink2}{rgb}{0.93,0.42,0.65}
\definecolor{HotPink3}{rgb}{0.80,0.38,0.56}
\definecolor{HotPink4}{rgb}{0.55,0.23,0.38}
\definecolor{HotPink}{rgb}{1.00,0.41,0.71}
\definecolor{IndianRed1}{rgb}{1.00,0.42,0.42}
\definecolor{IndianRed2}{rgb}{0.93,0.39,0.39}
\definecolor{IndianRed3}{rgb}{0.80,0.33,0.33}
\definecolor{IndianRed4}{rgb}{0.55,0.23,0.23}
\definecolor{IndianRed}{rgb}{0.80,0.36,0.36}
\definecolor{LavenderBlush1}{rgb}{1.00,0.94,0.96}
\definecolor{LavenderBlush2}{rgb}{0.93,0.88,0.90}
\definecolor{LavenderBlush3}{rgb}{0.80,0.76,0.77}
\definecolor{LavenderBlush4}{rgb}{0.55,0.51,0.53}
\definecolor{LavenderBlush}{rgb}{1.00,0.94,0.96}
\definecolor{LawnGreen}{rgb}{0.49,0.99,0.00}
\definecolor{LemonChiffon1}{rgb}{1.00,0.98,0.80}
\definecolor{LemonChiffon2}{rgb}{0.93,0.91,0.75}
\definecolor{LemonChiffon3}{rgb}{0.80,0.79,0.65}
\definecolor{LemonChiffon4}{rgb}{0.55,0.54,0.44}
\definecolor{LemonChiffon}{rgb}{1.00,0.98,0.80}
\definecolor{LightBlue1}{rgb}{0.75,0.94,1.00}
\definecolor{LightBlue2}{rgb}{0.70,0.87,0.93}
\definecolor{LightBlue3}{rgb}{0.60,0.75,0.80}
\definecolor{LightBlue4}{rgb}{0.41,0.51,0.55}
\definecolor{LightBlue}{rgb}{0.68,0.85,0.90}
\definecolor{LightCoral}{rgb}{0.94,0.50,0.50}
\definecolor{LightCyan1}{rgb}{0.88,1.00,1.00}
\definecolor{LightCyan2}{rgb}{0.82,0.93,0.93}
\definecolor{LightCyan3}{rgb}{0.71,0.80,0.80}
\definecolor{LightCyan4}{rgb}{0.48,0.55,0.55}
\definecolor{LightCyan}{rgb}{0.88,1.00,1.00}
\definecolor{LightGoldenrod1}{rgb}{1.00,0.93,0.55}
\definecolor{LightGoldenrod2}{rgb}{0.93,0.86,0.51}
\definecolor{LightGoldenrod3}{rgb}{0.80,0.75,0.44}
\definecolor{LightGoldenrod4}{rgb}{0.55,0.51,0.30}
\definecolor{LightGoldenrodYellow}{rgb}{0.98,0.98,0.82}
\definecolor{LightGoldenrod}{rgb}{0.93,0.87,0.51}
\definecolor{LightGray}{rgb}{0.83,0.83,0.83}
\definecolor{LightGreen}{rgb}{0.56,0.93,0.56}
\definecolor{LightGrey}{rgb}{0.83,0.83,0.83}
\definecolor{LightPink1}{rgb}{1.00,0.68,0.73}
\definecolor{LightPink2}{rgb}{0.93,0.64,0.68}
\definecolor{LightPink3}{rgb}{0.80,0.55,0.58}
\definecolor{LightPink4}{rgb}{0.55,0.37,0.40}
\definecolor{LightPink}{rgb}{1.00,0.71,0.76}
\definecolor{LightSalmon1}{rgb}{1.00,0.63,0.48}
\definecolor{LightSalmon2}{rgb}{0.93,0.58,0.45}
\definecolor{LightSalmon3}{rgb}{0.80,0.51,0.38}
\definecolor{LightSalmon4}{rgb}{0.55,0.34,0.26}
\definecolor{LightSalmon}{rgb}{1.00,0.63,0.48}
\definecolor{LightSeaGreen}{rgb}{0.13,0.70,0.67}
\definecolor{LightSkyBlue1}{rgb}{0.69,0.89,1.00}
\definecolor{LightSkyBlue2}{rgb}{0.64,0.83,0.93}
\definecolor{LightSkyBlue3}{rgb}{0.55,0.71,0.80}
\definecolor{LightSkyBlue4}{rgb}{0.38,0.48,0.55}
\definecolor{LightSkyBlue}{rgb}{0.53,0.81,0.98}
\definecolor{LightSlateBlue}{rgb}{0.52,0.44,1.00}
\definecolor{LightSlateGray}{rgb}{0.47,0.53,0.60}
\definecolor{LightSlateGrey}{rgb}{0.47,0.53,0.60}
\definecolor{LightSteelBlue1}{rgb}{0.79,0.88,1.00}
\definecolor{LightSteelBlue2}{rgb}{0.74,0.82,0.93}
\definecolor{LightSteelBlue3}{rgb}{0.64,0.71,0.80}
\definecolor{LightSteelBlue4}{rgb}{0.43,0.48,0.55}
\definecolor{LightSteelBlue}{rgb}{0.69,0.77,0.87}
\definecolor{LightYellow1}{rgb}{1.00,1.00,0.88}
\definecolor{LightYellow2}{rgb}{0.93,0.93,0.82}
\definecolor{LightYellow3}{rgb}{0.80,0.80,0.71}
\definecolor{LightYellow4}{rgb}{0.55,0.55,0.48}
\definecolor{LightYellow}{rgb}{1.00,1.00,0.88}
\definecolor{LimeGreen}{rgb}{0.20,0.80,0.20}
\definecolor{MediumAquamarine}{rgb}{0.40,0.80,0.67}
\definecolor{MediumBlue}{rgb}{0.00,0.00,0.80}
\definecolor{MediumOrchid1}{rgb}{0.88,0.40,1.00}
\definecolor{MediumOrchid2}{rgb}{0.82,0.37,0.93}
\definecolor{MediumOrchid3}{rgb}{0.71,0.32,0.80}
\definecolor{MediumOrchid4}{rgb}{0.48,0.22,0.55}
\definecolor{MediumOrchid}{rgb}{0.73,0.33,0.83}
\definecolor{MediumPurple1}{rgb}{0.67,0.51,1.00}
\definecolor{MediumPurple2}{rgb}{0.62,0.47,0.93}
\definecolor{MediumPurple3}{rgb}{0.54,0.41,0.80}
\definecolor{MediumPurple4}{rgb}{0.36,0.28,0.55}
\definecolor{MediumPurple}{rgb}{0.58,0.44,0.86}
\definecolor{MediumSeaGreen}{rgb}{0.24,0.70,0.44}
\definecolor{MediumSlateBlue}{rgb}{0.48,0.41,0.93}
\definecolor{MediumSpringGreen}{rgb}{0.00,0.98,0.60}
\definecolor{MediumTurquoise}{rgb}{0.28,0.82,0.80}
\definecolor{MediumVioletRed}{rgb}{0.78,0.08,0.52}
\definecolor{MidnightBlue}{rgb}{0.10,0.10,0.44}
\definecolor{MintCream}{rgb}{0.96,1.00,0.98}
\definecolor{MistyRose1}{rgb}{1.00,0.89,0.88}
\definecolor{MistyRose2}{rgb}{0.93,0.84,0.82}
\definecolor{MistyRose3}{rgb}{0.80,0.72,0.71}
\definecolor{MistyRose4}{rgb}{0.55,0.49,0.48}
\definecolor{MistyRose}{rgb}{1.00,0.89,0.88}
\definecolor{NavajoWhite1}{rgb}{1.00,0.87,0.68}
\definecolor{NavajoWhite2}{rgb}{0.93,0.81,0.63}
\definecolor{NavajoWhite3}{rgb}{0.80,0.70,0.55}
\definecolor{NavajoWhite4}{rgb}{0.55,0.47,0.37}
\definecolor{NavajoWhite}{rgb}{1.00,0.87,0.68}
\definecolor{NavyBlue}{rgb}{0.00,0.00,0.50}
\definecolor{OldLace}{rgb}{0.99,0.96,0.90}
\definecolor{OliveDrab1}{rgb}{0.75,1.00,0.24}
\definecolor{OliveDrab2}{rgb}{0.70,0.93,0.23}
\definecolor{OliveDrab3}{rgb}{0.60,0.80,0.20}
\definecolor{OliveDrab4}{rgb}{0.41,0.55,0.13}
\definecolor{OliveDrab}{rgb}{0.42,0.56,0.14}
\definecolor{OrangeRed1}{rgb}{1.00,0.27,0.00}
\definecolor{OrangeRed2}{rgb}{0.93,0.25,0.00}
\definecolor{OrangeRed3}{rgb}{0.80,0.22,0.00}
\definecolor{OrangeRed4}{rgb}{0.55,0.15,0.00}
\definecolor{OrangeRed}{rgb}{1.00,0.27,0.00}
\definecolor{PaleGoldenrod}{rgb}{0.93,0.91,0.67}
\definecolor{PaleGreen1}{rgb}{0.60,1.00,0.60}
\definecolor{PaleGreen2}{rgb}{0.56,0.93,0.56}
\definecolor{PaleGreen3}{rgb}{0.49,0.80,0.49}
\definecolor{PaleGreen4}{rgb}{0.33,0.55,0.33}
\definecolor{PaleGreen}{rgb}{0.60,0.98,0.60}
\definecolor{PaleTurquoise1}{rgb}{0.73,1.00,1.00}
\definecolor{PaleTurquoise2}{rgb}{0.68,0.93,0.93}
\definecolor{PaleTurquoise3}{rgb}{0.59,0.80,0.80}
\definecolor{PaleTurquoise4}{rgb}{0.40,0.55,0.55}
\definecolor{PaleTurquoise}{rgb}{0.69,0.93,0.93}
\definecolor{PaleVioletRed1}{rgb}{1.00,0.51,0.67}
\definecolor{PaleVioletRed2}{rgb}{0.93,0.47,0.62}
\definecolor{PaleVioletRed3}{rgb}{0.80,0.41,0.54}
\definecolor{PaleVioletRed4}{rgb}{0.55,0.28,0.36}
\definecolor{PaleVioletRed}{rgb}{0.86,0.44,0.58}
\definecolor{PapayaWhip}{rgb}{1.00,0.94,0.84}
\definecolor{PeachPuff1}{rgb}{1.00,0.85,0.73}
\definecolor{PeachPuff2}{rgb}{0.93,0.80,0.68}
\definecolor{PeachPuff3}{rgb}{0.80,0.69,0.58}
\definecolor{PeachPuff4}{rgb}{0.55,0.47,0.40}
\definecolor{PeachPuff}{rgb}{1.00,0.85,0.73}
\definecolor{PowderBlue}{rgb}{0.69,0.88,0.90}
\definecolor{RosyBrown1}{rgb}{1.00,0.76,0.76}
\definecolor{RosyBrown2}{rgb}{0.93,0.71,0.71}
\definecolor{RosyBrown3}{rgb}{0.80,0.61,0.61}
\definecolor{RosyBrown4}{rgb}{0.55,0.41,0.41}
\definecolor{RosyBrown}{rgb}{0.74,0.56,0.56}
\definecolor{RoyalBlue1}{rgb}{0.28,0.46,1.00}
\definecolor{RoyalBlue2}{rgb}{0.26,0.43,0.93}
\definecolor{RoyalBlue3}{rgb}{0.23,0.37,0.80}
\definecolor{RoyalBlue4}{rgb}{0.15,0.25,0.55}
\definecolor{RoyalBlue}{rgb}{0.25,0.41,0.88}
\definecolor{SaddleBrown}{rgb}{0.55,0.27,0.07}
\definecolor{SandyBrown}{rgb}{0.96,0.64,0.38}
\definecolor{SeaGreen1}{rgb}{0.33,1.00,0.62}
\definecolor{SeaGreen2}{rgb}{0.31,0.93,0.58}
\definecolor{SeaGreen3}{rgb}{0.26,0.80,0.50}
\definecolor{SeaGreen4}{rgb}{0.18,0.55,0.34}
\definecolor{SeaGreen}{rgb}{0.18,0.55,0.34}
\definecolor{SkyBlue1}{rgb}{0.53,0.81,1.00}
\definecolor{SkyBlue2}{rgb}{0.49,0.75,0.93}
\definecolor{SkyBlue3}{rgb}{0.42,0.65,0.80}
\definecolor{SkyBlue4}{rgb}{0.29,0.44,0.55}
\definecolor{SkyBlue}{rgb}{0.53,0.81,0.92}
\definecolor{SlateBlue1}{rgb}{0.51,0.44,1.00}
\definecolor{SlateBlue2}{rgb}{0.48,0.40,0.93}
\definecolor{SlateBlue3}{rgb}{0.41,0.35,0.80}
\definecolor{SlateBlue4}{rgb}{0.28,0.24,0.55}
\definecolor{SlateBlue}{rgb}{0.42,0.35,0.80}
\definecolor{SlateGray1}{rgb}{0.78,0.89,1.00}
\definecolor{SlateGray2}{rgb}{0.73,0.83,0.93}
\definecolor{SlateGray3}{rgb}{0.62,0.71,0.80}
\definecolor{SlateGray4}{rgb}{0.42,0.48,0.55}
\definecolor{SlateGray}{rgb}{0.44,0.50,0.56}
\definecolor{SlateGrey}{rgb}{0.44,0.50,0.56}
\definecolor{SpringGreen1}{rgb}{0.00,1.00,0.50}
\definecolor{SpringGreen2}{rgb}{0.00,0.93,0.46}
\definecolor{SpringGreen3}{rgb}{0.00,0.80,0.40}
\definecolor{SpringGreen4}{rgb}{0.00,0.55,0.27}
\definecolor{SpringGreen}{rgb}{0.00,1.00,0.50}
\definecolor{SteelBlue1}{rgb}{0.39,0.72,1.00}
\definecolor{SteelBlue2}{rgb}{0.36,0.67,0.93}
\definecolor{SteelBlue3}{rgb}{0.31,0.58,0.80}
\definecolor{SteelBlue4}{rgb}{0.21,0.39,0.55}
\definecolor{SteelBlue}{rgb}{0.27,0.51,0.71}
\definecolor{VioletRed1}{rgb}{1.00,0.24,0.59}
\definecolor{VioletRed2}{rgb}{0.93,0.23,0.55}
\definecolor{VioletRed3}{rgb}{0.80,0.20,0.47}
\definecolor{VioletRed4}{rgb}{0.55,0.13,0.32}
\definecolor{VioletRed}{rgb}{0.82,0.13,0.56}
\definecolor{WhiteSmoke}{rgb}{0.96,0.96,0.96}
\definecolor{YellowGreen}{rgb}{0.60,0.80,0.20}
\definecolor{aliceblue}{rgb}{0.94,0.97,1.00}
\definecolor{antiquewhite}{rgb}{0.98,0.92,0.84}
\definecolor{aquamarine1}{rgb}{0.50,1.00,0.83}
\definecolor{aquamarine2}{rgb}{0.46,0.93,0.78}
\definecolor{aquamarine3}{rgb}{0.40,0.80,0.67}
\definecolor{aquamarine4}{rgb}{0.27,0.55,0.45}
\definecolor{aquamarine}{rgb}{0.50,1.00,0.83}
\definecolor{azure1}{rgb}{0.94,1.00,1.00}
\definecolor{azure2}{rgb}{0.88,0.93,0.93}
\definecolor{azure3}{rgb}{0.76,0.80,0.80}
\definecolor{azure4}{rgb}{0.51,0.55,0.55}
\definecolor{azure}{rgb}{0.94,1.00,1.00}
\definecolor{beige}{rgb}{0.96,0.96,0.86}
\definecolor{bisque1}{rgb}{1.00,0.89,0.77}
\definecolor{bisque2}{rgb}{0.93,0.84,0.72}
\definecolor{bisque3}{rgb}{0.80,0.72,0.62}
\definecolor{bisque4}{rgb}{0.55,0.49,0.42}
\definecolor{bisque}{rgb}{1.00,0.89,0.77}
\definecolor{black}{rgb}{0.00,0.00,0.00}
\definecolor{blanchedalmond}{rgb}{1.00,0.92,0.80}
\definecolor{blue1}{rgb}{0.00,0.00,1.00}
\definecolor{blue2}{rgb}{0.00,0.00,0.93}
\definecolor{blue3}{rgb}{0.00,0.00,0.80}
\definecolor{blue4}{rgb}{0.00,0.00,0.55}
\definecolor{blueviolet}{rgb}{0.54,0.17,0.89}
\definecolor{blue}{rgb}{0.00,0.00,1.00}
\definecolor{brown1}{rgb}{1.00,0.25,0.25}
\definecolor{brown2}{rgb}{0.93,0.23,0.23}
\definecolor{brown3}{rgb}{0.80,0.20,0.20}
\definecolor{brown4}{rgb}{0.55,0.14,0.14}
\definecolor{brown}{rgb}{0.65,0.16,0.16}
\definecolor{burlywood1}{rgb}{1.00,0.83,0.61}
\definecolor{burlywood2}{rgb}{0.93,0.77,0.57}
\definecolor{burlywood3}{rgb}{0.80,0.67,0.49}
\definecolor{burlywood4}{rgb}{0.55,0.45,0.33}
\definecolor{burlywood}{rgb}{0.87,0.72,0.53}
\definecolor{cadetblue}{rgb}{0.37,0.62,0.63}
\definecolor{chartreuse1}{rgb}{0.50,1.00,0.00}
\definecolor{chartreuse2}{rgb}{0.46,0.93,0.00}
\definecolor{chartreuse3}{rgb}{0.40,0.80,0.00}
\definecolor{chartreuse4}{rgb}{0.27,0.55,0.00}
\definecolor{chartreuse}{rgb}{0.50,1.00,0.00}
\definecolor{chocolate1}{rgb}{1.00,0.50,0.14}
\definecolor{chocolate2}{rgb}{0.93,0.46,0.13}
\definecolor{chocolate3}{rgb}{0.80,0.40,0.11}
\definecolor{chocolate4}{rgb}{0.55,0.27,0.07}
\definecolor{chocolate}{rgb}{0.82,0.41,0.12}
\definecolor{coral1}{rgb}{1.00,0.45,0.34}
\definecolor{coral2}{rgb}{0.93,0.42,0.31}
\definecolor{coral3}{rgb}{0.80,0.36,0.27}
\definecolor{coral4}{rgb}{0.55,0.24,0.18}
\definecolor{coral}{rgb}{1.00,0.50,0.31}
\definecolor{cornflowerblue}{rgb}{0.39,0.58,0.93}
\definecolor{cornsilk1}{rgb}{1.00,0.97,0.86}
\definecolor{cornsilk2}{rgb}{0.93,0.91,0.80}
\definecolor{cornsilk3}{rgb}{0.80,0.78,0.69}
\definecolor{cornsilk4}{rgb}{0.55,0.53,0.47}
\definecolor{cornsilk}{rgb}{1.00,0.97,0.86}
\definecolor{cyan1}{rgb}{0.00,1.00,1.00}
\definecolor{cyan2}{rgb}{0.00,0.93,0.93}
\definecolor{cyan3}{rgb}{0.00,0.80,0.80}
\definecolor{cyan4}{rgb}{0.00,0.55,0.55}
\definecolor{cyan}{rgb}{0.00,1.00,1.00}
\definecolor{darkblue}{rgb}{0.00,0.00,0.55}
\definecolor{darkcyan}{rgb}{0.00,0.55,0.55}
\definecolor{darkgoldenrod}{rgb}{0.72,0.53,0.04}
\definecolor{darkgray}{rgb}{0.66,0.66,0.66}
\definecolor{darkgreen}{rgb}{0.00,0.39,0.00}
\definecolor{darkgrey}{rgb}{0.66,0.66,0.66}
\definecolor{darkkhaki}{rgb}{0.74,0.72,0.42}
\definecolor{darkmagenta}{rgb}{0.55,0.00,0.55}
\definecolor{darkolive}{rgb}{0.33,0.42,0.18}
\definecolor{darkorange}{rgb}{1.00,0.55,0.00}
\definecolor{darkorchid}{rgb}{0.60,0.20,0.80}
\definecolor{darkred}{rgb}{0.55,0.00,0.00}
\definecolor{darksalmon}{rgb}{0.91,0.59,0.48}
\definecolor{darksea}{rgb}{0.56,0.74,0.56}
\definecolor{darkslate}{rgb}{0.18,0.31,0.31}
\definecolor{darkslate}{rgb}{0.18,0.31,0.31}
\definecolor{darkslate}{rgb}{0.28,0.24,0.55}
\definecolor{darkturquoise}{rgb}{0.00,0.81,0.82}
\definecolor{darkviolet}{rgb}{0.58,0.00,0.83}
\definecolor{deeppink}{rgb}{1.00,0.08,0.58}
\definecolor{deepsky}{rgb}{0.00,0.75,1.00}
\definecolor{dimgray}{rgb}{0.41,0.41,0.41}
\definecolor{dimgrey}{rgb}{0.41,0.41,0.41}
\definecolor{dodgerblue}{rgb}{0.12,0.56,1.00}
\definecolor{firebrick1}{rgb}{1.00,0.19,0.19}
\definecolor{firebrick2}{rgb}{0.93,0.17,0.17}
\definecolor{firebrick3}{rgb}{0.80,0.15,0.15}
\definecolor{firebrick4}{rgb}{0.55,0.10,0.10}
\definecolor{firebrick}{rgb}{0.70,0.13,0.13}
\definecolor{floralwhite}{rgb}{1.00,0.98,0.94}
\definecolor{forestgreen}{rgb}{0.13,0.55,0.13}
\definecolor{gainsboro}{rgb}{0.86,0.86,0.86}
\definecolor{ghostwhite}{rgb}{0.97,0.97,1.00}
\definecolor{gold1}{rgb}{1.00,0.84,0.00}
\definecolor{gold2}{rgb}{0.93,0.79,0.00}
\definecolor{gold3}{rgb}{0.80,0.68,0.00}
\definecolor{gold4}{rgb}{0.55,0.46,0.00}
\definecolor{goldenrod1}{rgb}{1.00,0.76,0.15}
\definecolor{goldenrod2}{rgb}{0.93,0.71,0.13}
\definecolor{goldenrod3}{rgb}{0.80,0.61,0.11}
\definecolor{goldenrod4}{rgb}{0.55,0.41,0.08}
\definecolor{goldenrod}{rgb}{0.85,0.65,0.13}
\definecolor{gold}{rgb}{1.00,0.84,0.00}
\definecolor{gray0}{rgb}{0.00,0.00,0.00}
\definecolor{gray100}{rgb}{1.00,1.00,1.00}
\definecolor{gray10}{rgb}{0.10,0.10,0.10}
\definecolor{gray11}{rgb}{0.11,0.11,0.11}
\definecolor{gray12}{rgb}{0.12,0.12,0.12}
\definecolor{gray13}{rgb}{0.13,0.13,0.13}
\definecolor{gray14}{rgb}{0.14,0.14,0.14}
\definecolor{gray15}{rgb}{0.15,0.15,0.15}
\definecolor{gray16}{rgb}{0.16,0.16,0.16}
\definecolor{gray17}{rgb}{0.17,0.17,0.17}
\definecolor{gray18}{rgb}{0.18,0.18,0.18}
\definecolor{gray19}{rgb}{0.19,0.19,0.19}
\definecolor{gray1}{rgb}{0.01,0.01,0.01}
\definecolor{gray20}{rgb}{0.20,0.20,0.20}
\definecolor{gray21}{rgb}{0.21,0.21,0.21}
\definecolor{gray22}{rgb}{0.22,0.22,0.22}
\definecolor{gray23}{rgb}{0.23,0.23,0.23}
\definecolor{gray24}{rgb}{0.24,0.24,0.24}
\definecolor{gray25}{rgb}{0.25,0.25,0.25}
\definecolor{gray26}{rgb}{0.26,0.26,0.26}
\definecolor{gray27}{rgb}{0.27,0.27,0.27}
\definecolor{gray28}{rgb}{0.28,0.28,0.28}
\definecolor{gray29}{rgb}{0.29,0.29,0.29}
\definecolor{gray2}{rgb}{0.02,0.02,0.02}
\definecolor{gray30}{rgb}{0.30,0.30,0.30}
\definecolor{gray31}{rgb}{0.31,0.31,0.31}
\definecolor{gray32}{rgb}{0.32,0.32,0.32}
\definecolor{gray33}{rgb}{0.33,0.33,0.33}
\definecolor{gray34}{rgb}{0.34,0.34,0.34}
\definecolor{gray35}{rgb}{0.35,0.35,0.35}
\definecolor{gray36}{rgb}{0.36,0.36,0.36}
\definecolor{gray37}{rgb}{0.37,0.37,0.37}
\definecolor{gray38}{rgb}{0.38,0.38,0.38}
\definecolor{gray39}{rgb}{0.39,0.39,0.39}
\definecolor{gray3}{rgb}{0.03,0.03,0.03}
\definecolor{gray40}{rgb}{0.40,0.40,0.40}
\definecolor{gray41}{rgb}{0.41,0.41,0.41}
\definecolor{gray42}{rgb}{0.42,0.42,0.42}
\definecolor{gray43}{rgb}{0.43,0.43,0.43}
\definecolor{gray44}{rgb}{0.44,0.44,0.44}
\definecolor{gray45}{rgb}{0.45,0.45,0.45}
\definecolor{gray46}{rgb}{0.46,0.46,0.46}
\definecolor{gray47}{rgb}{0.47,0.47,0.47}
\definecolor{gray48}{rgb}{0.48,0.48,0.48}
\definecolor{gray49}{rgb}{0.49,0.49,0.49}
\definecolor{gray4}{rgb}{0.04,0.04,0.04}
\definecolor{gray50}{rgb}{0.50,0.50,0.50}
\definecolor{gray51}{rgb}{0.51,0.51,0.51}
\definecolor{gray52}{rgb}{0.52,0.52,0.52}
\definecolor{gray53}{rgb}{0.53,0.53,0.53}
\definecolor{gray54}{rgb}{0.54,0.54,0.54}
\definecolor{gray55}{rgb}{0.55,0.55,0.55}
\definecolor{gray56}{rgb}{0.56,0.56,0.56}
\definecolor{gray57}{rgb}{0.57,0.57,0.57}
\definecolor{gray58}{rgb}{0.58,0.58,0.58}
\definecolor{gray59}{rgb}{0.59,0.59,0.59}
\definecolor{gray5}{rgb}{0.05,0.05,0.05}
\definecolor{gray60}{rgb}{0.60,0.60,0.60}
\definecolor{gray61}{rgb}{0.61,0.61,0.61}
\definecolor{gray62}{rgb}{0.62,0.62,0.62}
\definecolor{gray63}{rgb}{0.63,0.63,0.63}
\definecolor{gray64}{rgb}{0.64,0.64,0.64}
\definecolor{gray65}{rgb}{0.65,0.65,0.65}
\definecolor{gray66}{rgb}{0.66,0.66,0.66}
\definecolor{gray67}{rgb}{0.67,0.67,0.67}
\definecolor{gray68}{rgb}{0.68,0.68,0.68}
\definecolor{gray69}{rgb}{0.69,0.69,0.69}
\definecolor{gray6}{rgb}{0.06,0.06,0.06}
\definecolor{gray70}{rgb}{0.70,0.70,0.70}
\definecolor{gray71}{rgb}{0.71,0.71,0.71}
\definecolor{gray72}{rgb}{0.72,0.72,0.72}
\definecolor{gray73}{rgb}{0.73,0.73,0.73}
\definecolor{gray74}{rgb}{0.74,0.74,0.74}
\definecolor{gray75}{rgb}{0.75,0.75,0.75}
\definecolor{gray76}{rgb}{0.76,0.76,0.76}
\definecolor{gray77}{rgb}{0.77,0.77,0.77}
\definecolor{gray78}{rgb}{0.78,0.78,0.78}
\definecolor{gray79}{rgb}{0.79,0.79,0.79}
\definecolor{gray7}{rgb}{0.07,0.07,0.07}
\definecolor{gray80}{rgb}{0.80,0.80,0.80}
\definecolor{gray81}{rgb}{0.81,0.81,0.81}
\definecolor{gray82}{rgb}{0.82,0.82,0.82}
\definecolor{gray83}{rgb}{0.83,0.83,0.83}
\definecolor{gray84}{rgb}{0.84,0.84,0.84}
\definecolor{gray85}{rgb}{0.85,0.85,0.85}
\definecolor{gray86}{rgb}{0.86,0.86,0.86}
\definecolor{gray87}{rgb}{0.87,0.87,0.87}
\definecolor{gray88}{rgb}{0.88,0.88,0.88}
\definecolor{gray89}{rgb}{0.89,0.89,0.89}
\definecolor{gray8}{rgb}{0.08,0.08,0.08}
\definecolor{gray90}{rgb}{0.90,0.90,0.90}
\definecolor{gray91}{rgb}{0.91,0.91,0.91}
\definecolor{gray92}{rgb}{0.92,0.92,0.92}
\definecolor{gray93}{rgb}{0.93,0.93,0.93}
\definecolor{gray94}{rgb}{0.94,0.94,0.94}
\definecolor{gray95}{rgb}{0.95,0.95,0.95}
\definecolor{gray96}{rgb}{0.96,0.96,0.96}
\definecolor{gray97}{rgb}{0.97,0.97,0.97}
\definecolor{gray98}{rgb}{0.98,0.98,0.98}
\definecolor{gray99}{rgb}{0.99,0.99,0.99}
\definecolor{gray9}{rgb}{0.09,0.09,0.09}
\definecolor{gray}{rgb}{0.75,0.75,0.75}
\definecolor{green1}{rgb}{0.00,1.00,0.00}
\definecolor{green2}{rgb}{0.00,0.93,0.00}
\definecolor{green3}{rgb}{0.00,0.80,0.00}
\definecolor{green4}{rgb}{0.00,0.55,0.00}
\definecolor{greenyellow}{rgb}{0.68,1.00,0.18}
\definecolor{green}{rgb}{0.00,1.00,0.00}
\definecolor{grey0}{rgb}{0.00,0.00,0.00}
\definecolor{grey100}{rgb}{1.00,1.00,1.00}
\definecolor{grey10}{rgb}{0.10,0.10,0.10}
\definecolor{grey11}{rgb}{0.11,0.11,0.11}
\definecolor{grey12}{rgb}{0.12,0.12,0.12}
\definecolor{grey13}{rgb}{0.13,0.13,0.13}
\definecolor{grey14}{rgb}{0.14,0.14,0.14}
\definecolor{grey15}{rgb}{0.15,0.15,0.15}
\definecolor{grey16}{rgb}{0.16,0.16,0.16}
\definecolor{grey17}{rgb}{0.17,0.17,0.17}
\definecolor{grey18}{rgb}{0.18,0.18,0.18}
\definecolor{grey19}{rgb}{0.19,0.19,0.19}
\definecolor{grey1}{rgb}{0.01,0.01,0.01}
\definecolor{grey20}{rgb}{0.20,0.20,0.20}
\definecolor{grey21}{rgb}{0.21,0.21,0.21}
\definecolor{grey22}{rgb}{0.22,0.22,0.22}
\definecolor{grey23}{rgb}{0.23,0.23,0.23}
\definecolor{grey24}{rgb}{0.24,0.24,0.24}
\definecolor{grey25}{rgb}{0.25,0.25,0.25}
\definecolor{grey26}{rgb}{0.26,0.26,0.26}
\definecolor{grey27}{rgb}{0.27,0.27,0.27}
\definecolor{grey28}{rgb}{0.28,0.28,0.28}
\definecolor{grey29}{rgb}{0.29,0.29,0.29}
\definecolor{grey2}{rgb}{0.02,0.02,0.02}
\definecolor{grey30}{rgb}{0.30,0.30,0.30}
\definecolor{grey31}{rgb}{0.31,0.31,0.31}
\definecolor{grey32}{rgb}{0.32,0.32,0.32}
\definecolor{grey33}{rgb}{0.33,0.33,0.33}
\definecolor{grey34}{rgb}{0.34,0.34,0.34}
\definecolor{grey35}{rgb}{0.35,0.35,0.35}
\definecolor{grey36}{rgb}{0.36,0.36,0.36}
\definecolor{grey37}{rgb}{0.37,0.37,0.37}
\definecolor{grey38}{rgb}{0.38,0.38,0.38}
\definecolor{grey39}{rgb}{0.39,0.39,0.39}
\definecolor{grey3}{rgb}{0.03,0.03,0.03}
\definecolor{grey40}{rgb}{0.40,0.40,0.40}
\definecolor{grey41}{rgb}{0.41,0.41,0.41}
\definecolor{grey42}{rgb}{0.42,0.42,0.42}
\definecolor{grey43}{rgb}{0.43,0.43,0.43}
\definecolor{grey44}{rgb}{0.44,0.44,0.44}
\definecolor{grey45}{rgb}{0.45,0.45,0.45}
\definecolor{grey46}{rgb}{0.46,0.46,0.46}
\definecolor{grey47}{rgb}{0.47,0.47,0.47}
\definecolor{grey48}{rgb}{0.48,0.48,0.48}
\definecolor{grey49}{rgb}{0.49,0.49,0.49}
\definecolor{grey4}{rgb}{0.04,0.04,0.04}
\definecolor{grey50}{rgb}{0.50,0.50,0.50}
\definecolor{grey51}{rgb}{0.51,0.51,0.51}
\definecolor{grey52}{rgb}{0.52,0.52,0.52}
\definecolor{grey53}{rgb}{0.53,0.53,0.53}
\definecolor{grey54}{rgb}{0.54,0.54,0.54}
\definecolor{grey55}{rgb}{0.55,0.55,0.55}
\definecolor{grey56}{rgb}{0.56,0.56,0.56}
\definecolor{grey57}{rgb}{0.57,0.57,0.57}
\definecolor{grey58}{rgb}{0.58,0.58,0.58}
\definecolor{grey59}{rgb}{0.59,0.59,0.59}
\definecolor{grey5}{rgb}{0.05,0.05,0.05}
\definecolor{grey60}{rgb}{0.60,0.60,0.60}
\definecolor{grey61}{rgb}{0.61,0.61,0.61}
\definecolor{grey62}{rgb}{0.62,0.62,0.62}
\definecolor{grey63}{rgb}{0.63,0.63,0.63}
\definecolor{grey64}{rgb}{0.64,0.64,0.64}
\definecolor{grey65}{rgb}{0.65,0.65,0.65}
\definecolor{grey66}{rgb}{0.66,0.66,0.66}
\definecolor{grey67}{rgb}{0.67,0.67,0.67}
\definecolor{grey68}{rgb}{0.68,0.68,0.68}
\definecolor{grey69}{rgb}{0.69,0.69,0.69}
\definecolor{grey6}{rgb}{0.06,0.06,0.06}
\definecolor{grey70}{rgb}{0.70,0.70,0.70}
\definecolor{grey71}{rgb}{0.71,0.71,0.71}
\definecolor{grey72}{rgb}{0.72,0.72,0.72}
\definecolor{grey73}{rgb}{0.73,0.73,0.73}
\definecolor{grey74}{rgb}{0.74,0.74,0.74}
\definecolor{grey75}{rgb}{0.75,0.75,0.75}
\definecolor{grey76}{rgb}{0.76,0.76,0.76}
\definecolor{grey77}{rgb}{0.77,0.77,0.77}
\definecolor{grey78}{rgb}{0.78,0.78,0.78}
\definecolor{grey79}{rgb}{0.79,0.79,0.79}
\definecolor{grey7}{rgb}{0.07,0.07,0.07}
\definecolor{grey80}{rgb}{0.80,0.80,0.80}
\definecolor{grey81}{rgb}{0.81,0.81,0.81}
\definecolor{grey82}{rgb}{0.82,0.82,0.82}
\definecolor{grey83}{rgb}{0.83,0.83,0.83}
\definecolor{grey84}{rgb}{0.84,0.84,0.84}
\definecolor{grey85}{rgb}{0.85,0.85,0.85}
\definecolor{grey86}{rgb}{0.86,0.86,0.86}
\definecolor{grey87}{rgb}{0.87,0.87,0.87}
\definecolor{grey88}{rgb}{0.88,0.88,0.88}
\definecolor{grey89}{rgb}{0.89,0.89,0.89}
\definecolor{grey8}{rgb}{0.08,0.08,0.08}
\definecolor{grey90}{rgb}{0.90,0.90,0.90}
\definecolor{grey91}{rgb}{0.91,0.91,0.91}
\definecolor{grey92}{rgb}{0.92,0.92,0.92}
\definecolor{grey93}{rgb}{0.93,0.93,0.93}
\definecolor{grey94}{rgb}{0.94,0.94,0.94}
\definecolor{grey95}{rgb}{0.95,0.95,0.95}
\definecolor{grey96}{rgb}{0.96,0.96,0.96}
\definecolor{grey97}{rgb}{0.97,0.97,0.97}
\definecolor{grey98}{rgb}{0.98,0.98,0.98}
\definecolor{grey99}{rgb}{0.99,0.99,0.99}
\definecolor{grey9}{rgb}{0.09,0.09,0.09}
\definecolor{grey}{rgb}{0.75,0.75,0.75}
\definecolor{honeydew1}{rgb}{0.94,1.00,0.94}
\definecolor{honeydew2}{rgb}{0.88,0.93,0.88}
\definecolor{honeydew3}{rgb}{0.76,0.80,0.76}
\definecolor{honeydew4}{rgb}{0.51,0.55,0.51}
\definecolor{honeydew}{rgb}{0.94,1.00,0.94}
\definecolor{hotpink}{rgb}{1.00,0.41,0.71}
\definecolor{indianred}{rgb}{0.80,0.36,0.36}
\definecolor{ivory1}{rgb}{1.00,1.00,0.94}
\definecolor{ivory2}{rgb}{0.93,0.93,0.88}
\definecolor{ivory3}{rgb}{0.80,0.80,0.76}
\definecolor{ivory4}{rgb}{0.55,0.55,0.51}
\definecolor{ivory}{rgb}{1.00,1.00,0.94}
\definecolor{khaki1}{rgb}{1.00,0.96,0.56}
\definecolor{khaki2}{rgb}{0.93,0.90,0.52}
\definecolor{khaki3}{rgb}{0.80,0.78,0.45}
\definecolor{khaki4}{rgb}{0.55,0.53,0.31}
\definecolor{khaki}{rgb}{0.94,0.90,0.55}
\definecolor{lavenderblush}{rgb}{1.00,0.94,0.96}
\definecolor{lavender}{rgb}{0.90,0.90,0.98}
\definecolor{lawngreen}{rgb}{0.49,0.99,0.00}
\definecolor{lemonchiffon}{rgb}{1.00,0.98,0.80}
\definecolor{lightblue}{rgb}{0.68,0.85,0.90}
\definecolor{lightcoral}{rgb}{0.94,0.50,0.50}
\definecolor{lightcyan}{rgb}{0.88,1.00,1.00}
\definecolor{lightgoldenrod}{rgb}{0.93,0.87,0.51}
\definecolor{lightgoldenrod}{rgb}{0.98,0.98,0.82}
\definecolor{lightgray}{rgb}{0.83,0.83,0.83}
\definecolor{lightgreen}{rgb}{0.56,0.93,0.56}
\definecolor{lightgrey}{rgb}{0.83,0.83,0.83}
\definecolor{lightpink}{rgb}{1.00,0.71,0.76}
\definecolor{lightsalmon}{rgb}{1.00,0.63,0.48}
\definecolor{lightsea}{rgb}{0.13,0.70,0.67}
\definecolor{lightsky}{rgb}{0.53,0.81,0.98}
\definecolor{lightslate}{rgb}{0.47,0.53,0.60}
\definecolor{lightslate}{rgb}{0.47,0.53,0.60}
\definecolor{lightslate}{rgb}{0.52,0.44,1.00}
\definecolor{lightsteel}{rgb}{0.69,0.77,0.87}
\definecolor{lightyellow}{rgb}{1.00,1.00,0.88}
\definecolor{limegreen}{rgb}{0.20,0.80,0.20}
\definecolor{linen}{rgb}{0.98,0.94,0.90}
\definecolor{magenta1}{rgb}{1.00,0.00,1.00}
\definecolor{magenta2}{rgb}{0.93,0.00,0.93}
\definecolor{magenta3}{rgb}{0.80,0.00,0.80}
\definecolor{magenta4}{rgb}{0.55,0.00,0.55}
\definecolor{magenta}{rgb}{1.00,0.00,1.00}
\definecolor{maroon1}{rgb}{1.00,0.20,0.70}
\definecolor{maroon2}{rgb}{0.93,0.19,0.65}
\definecolor{maroon3}{rgb}{0.80,0.16,0.56}
\definecolor{maroon4}{rgb}{0.55,0.11,0.38}
\definecolor{maroon}{rgb}{0.69,0.19,0.38}
\definecolor{mediumaquamarine}{rgb}{0.40,0.80,0.67}
\definecolor{mediumblue}{rgb}{0.00,0.00,0.80}
\definecolor{mediumorchid}{rgb}{0.73,0.33,0.83}
\definecolor{mediumpurple}{rgb}{0.58,0.44,0.86}
\definecolor{mediumsea}{rgb}{0.24,0.70,0.44}
\definecolor{mediumslate}{rgb}{0.48,0.41,0.93}
\definecolor{mediumspring}{rgb}{0.00,0.98,0.60}
\definecolor{mediumturquoise}{rgb}{0.28,0.82,0.80}
\definecolor{mediumviolet}{rgb}{0.78,0.08,0.52}
\definecolor{midnightblue}{rgb}{0.10,0.10,0.44}
\definecolor{mintcream}{rgb}{0.96,1.00,0.98}
\definecolor{mistyrose}{rgb}{1.00,0.89,0.88}
\definecolor{moccasin}{rgb}{1.00,0.89,0.71}
\definecolor{navajowhite}{rgb}{1.00,0.87,0.68}
\definecolor{navyblue}{rgb}{0.00,0.00,0.50}
\definecolor{navy}{rgb}{0.00,0.00,0.50}
\definecolor{oldlace}{rgb}{0.99,0.96,0.90}
\definecolor{olivedrab}{rgb}{0.42,0.56,0.14}
\definecolor{orange1}{rgb}{1.00,0.65,0.00}
\definecolor{orange2}{rgb}{0.93,0.60,0.00}
\definecolor{orange3}{rgb}{0.80,0.52,0.00}
\definecolor{orange4}{rgb}{0.55,0.35,0.00}
\definecolor{orangered}{rgb}{1.00,0.27,0.00}
\definecolor{orange}{rgb}{1.00,0.65,0.00}
\definecolor{orchid1}{rgb}{1.00,0.51,0.98}
\definecolor{orchid2}{rgb}{0.93,0.48,0.91}
\definecolor{orchid3}{rgb}{0.80,0.41,0.79}
\definecolor{orchid4}{rgb}{0.55,0.28,0.54}
\definecolor{orchid}{rgb}{0.85,0.44,0.84}
\definecolor{palegoldenrod}{rgb}{0.93,0.91,0.67}
\definecolor{palegreen}{rgb}{0.60,0.98,0.60}
\definecolor{paleturquoise}{rgb}{0.69,0.93,0.93}
\definecolor{paleviolet}{rgb}{0.86,0.44,0.58}
\definecolor{papayawhip}{rgb}{1.00,0.94,0.84}
\definecolor{peachpuff}{rgb}{1.00,0.85,0.73}
\definecolor{peru}{rgb}{0.80,0.52,0.25}
\definecolor{pink1}{rgb}{1.00,0.71,0.77}
\definecolor{pink2}{rgb}{0.93,0.66,0.72}
\definecolor{pink3}{rgb}{0.80,0.57,0.62}
\definecolor{pink4}{rgb}{0.55,0.39,0.42}
\definecolor{pink}{rgb}{1.00,0.75,0.80}
\definecolor{plum1}{rgb}{1.00,0.73,1.00}
\definecolor{plum2}{rgb}{0.93,0.68,0.93}
\definecolor{plum3}{rgb}{0.80,0.59,0.80}
\definecolor{plum4}{rgb}{0.55,0.40,0.55}
\definecolor{plum}{rgb}{0.87,0.63,0.87}
\definecolor{powderblue}{rgb}{0.69,0.88,0.90}
\definecolor{purple1}{rgb}{0.61,0.19,1.00}
\definecolor{purple2}{rgb}{0.57,0.17,0.93}
\definecolor{purple3}{rgb}{0.49,0.15,0.80}
\definecolor{purple4}{rgb}{0.33,0.10,0.55}
\definecolor{purple}{rgb}{0.63,0.13,0.94}
\definecolor{red1}{rgb}{1.00,0.00,0.00}
\definecolor{red2}{rgb}{0.93,0.00,0.00}
\definecolor{red3}{rgb}{0.80,0.00,0.00}
\definecolor{red4}{rgb}{0.55,0.00,0.00}
\definecolor{red}{rgb}{1.00,0.00,0.00}
\definecolor{rosybrown}{rgb}{0.74,0.56,0.56}
\definecolor{royalblue}{rgb}{0.25,0.41,0.88}
\definecolor{saddlebrown}{rgb}{0.55,0.27,0.07}
\definecolor{salmon1}{rgb}{1.00,0.55,0.41}
\definecolor{salmon2}{rgb}{0.93,0.51,0.38}
\definecolor{salmon3}{rgb}{0.80,0.44,0.33}
\definecolor{salmon4}{rgb}{0.55,0.30,0.22}
\definecolor{salmon}{rgb}{0.98,0.50,0.45}
\definecolor{sandybrown}{rgb}{0.96,0.64,0.38}
\definecolor{seagreen}{rgb}{0.18,0.55,0.34}
\definecolor{seashell1}{rgb}{1.00,0.96,0.93}
\definecolor{seashell2}{rgb}{0.93,0.90,0.87}
\definecolor{seashell3}{rgb}{0.80,0.77,0.75}
\definecolor{seashell4}{rgb}{0.55,0.53,0.51}
\definecolor{seashell}{rgb}{1.00,0.96,0.93}
\definecolor{sienna1}{rgb}{1.00,0.51,0.28}
\definecolor{sienna2}{rgb}{0.93,0.47,0.26}
\definecolor{sienna3}{rgb}{0.80,0.41,0.22}
\definecolor{sienna4}{rgb}{0.55,0.28,0.15}
\definecolor{sienna}{rgb}{0.63,0.32,0.18}
\definecolor{skyblue}{rgb}{0.53,0.81,0.92}
\definecolor{slateblue}{rgb}{0.42,0.35,0.80}
\definecolor{slategray}{rgb}{0.44,0.50,0.56}
\definecolor{slategrey}{rgb}{0.44,0.50,0.56}
\definecolor{snow1}{rgb}{1.00,0.98,0.98}
\definecolor{snow2}{rgb}{0.93,0.91,0.91}
\definecolor{snow3}{rgb}{0.80,0.79,0.79}
\definecolor{snow4}{rgb}{0.55,0.54,0.54}
\definecolor{snow}{rgb}{1.00,0.98,0.98}
\definecolor{springgreen}{rgb}{0.00,1.00,0.50}
\definecolor{steelblue}{rgb}{0.27,0.51,0.71}
\definecolor{tan1}{rgb}{1.00,0.65,0.31}
\definecolor{tan2}{rgb}{0.93,0.60,0.29}
\definecolor{tan3}{rgb}{0.80,0.52,0.25}
\definecolor{tan4}{rgb}{0.55,0.35,0.17}
\definecolor{tan}{rgb}{0.82,0.71,0.55}
\definecolor{thistle1}{rgb}{1.00,0.88,1.00}
\definecolor{thistle2}{rgb}{0.93,0.82,0.93}
\definecolor{thistle3}{rgb}{0.80,0.71,0.80}
\definecolor{thistle4}{rgb}{0.55,0.48,0.55}
\definecolor{thistle}{rgb}{0.85,0.75,0.85}
\definecolor{tomato1}{rgb}{1.00,0.39,0.28}
\definecolor{tomato2}{rgb}{0.93,0.36,0.26}
\definecolor{tomato3}{rgb}{0.80,0.31,0.22}
\definecolor{tomato4}{rgb}{0.55,0.21,0.15}
\definecolor{tomato}{rgb}{1.00,0.39,0.28}
\definecolor{turquoise1}{rgb}{0.00,0.96,1.00}
\definecolor{turquoise2}{rgb}{0.00,0.90,0.93}
\definecolor{turquoise3}{rgb}{0.00,0.77,0.80}
\definecolor{turquoise4}{rgb}{0.00,0.53,0.55}
\definecolor{turquoise}{rgb}{0.25,0.88,0.82}
\definecolor{violetred}{rgb}{0.82,0.13,0.56}
\definecolor{violet}{rgb}{0.93,0.51,0.93}
\definecolor{wheat1}{rgb}{1.00,0.91,0.73}
\definecolor{wheat2}{rgb}{0.93,0.85,0.68}
\definecolor{wheat3}{rgb}{0.80,0.73,0.59}
\definecolor{wheat4}{rgb}{0.55,0.49,0.40}
\definecolor{wheat}{rgb}{0.96,0.87,0.70}
\definecolor{whitesmoke}{rgb}{0.96,0.96,0.96}
\definecolor{white}{rgb}{1.00,1.00,1.00}
\definecolor{yellow1}{rgb}{1.00,1.00,0.00}
\definecolor{yellow2}{rgb}{0.93,0.93,0.00}
\definecolor{yellow3}{rgb}{0.80,0.80,0.00}
\definecolor{yellow4}{rgb}{0.55,0.55,0.00}
\definecolor{yellowgreen}{rgb}{0.60,0.80,0.20}
\definecolor{yellow}{rgb}{1.00,1.00,0.00}
\numberwithin{equation}{section}
\newcommand{\R}{\ensuremath{\mathbb{R}}}
\newcommand{\T}{\ensuremath{{T}}}
\newtheorem{definition}{Definition}[section]
\newtheorem{theorem}{Theorem}[section]
\newtheorem{lemma}{Lemma}[section]
\begin{document}

\begin{center}
{\LARGE Tyler Shape Depth}	
\end{center}

\begin{center}
{\large Davy Paindaveine$^*$ \quad and \quad  Germain Van Bever$^\dagger$}	
\end{center}

\begin{center}
$^*$ ECARES and Departement of Mathematics, Universit\'{e} libre de Bruxelles, Avenue \mbox{F.D.} Roosevelt, 50, 
CP114/04, B-1050, Brussels, Belgium
\end{center}

\begin{center}
$^\dagger$ Departement of Mathematics and Namur Institute for Complex Systems, Universit\'{e} de Namur, Rempart de la Vierge, 8, 5000, Namur, Belgium
\end{center}

\begin{abstract}
In many problems from multivariate analysis, the parameter of interest is a shape matrix, that is, a normalized version  of the corresponding scatter or dispersion matrix. In this paper, we propose a depth concept for shape matrices that involves data points only through their directions from the center of the distribution. We use the terminology Tyler shape depth since the resulting estimator of shape, namely the deepest shape matrix, is the median-based counterpart of the M-estimator of shape of \cite{Tyl1987}. Beyond estimation, shape depth, like its Tyler antecedent, also allows hypothesis testing on shape. Its main benefit, however, lies in the ranking of shape matrices it provides, whose practical relevance is illustrated in principal component analysis and in shape-based outlier detection. We study the invariance, quasi-concavity and continuity properties of Tyler shape depth, the topological and boundedness properties of the corresponding depth regions, existence of a deepest shape matrix and prove Fisher consistency in the elliptical case. Finally, we derive a Glivenko--Cantelli-type result and establish almost sure consistency of the deepest shape matrix estimator.  
\end{abstract} 

{\it Keywords}: Elliptical distribution; Principal component analysis; Robustness; Shape matrix; Statistical depth; Test for sphericity.\\

\hrule


\section{Introduction}
\vspace{0mm}

Location depths measure the centrality of an arbitrary $k$-vector~$\theta$ with respect to a probabi\-lity measure~$P=P^X$ over~$\R^k$. Letting~$\mathcal{S}^{k-1}=\{x\in\R^k:\|x\|^2=x^\T x =1\}$ denote the unit sphere in~$\R^k$, the most famous instance is the   \cite{Tuk1975} halfspace depth
\begin{equation}
	\label{definHD}
	D(\theta,P)
=
\inf_{u\in \mathcal{S}^{k-1}}{\rm pr}\{u^\T(X-\theta)\geq 0\}
;
\end{equation}  
throughout, $\rm pr$ refers to probability under the probability measure~$P$ at hand.
 The halfspace depth regions~$\{\theta\in\R^k: D(\theta,P)\geq \alpha\}$ form a family of nested convex subsets of~$\R^k$. The Tukey median~$\theta_P$, defined as the barycenter of the innermost region~$M_P=\{\theta\in\R^k:D(\theta,P)=\max_{\xi\in\R^k} D(\xi,P)\}$, extends the univariate median to the multivariate case and is a robust alternative to the expectation~$E(X)$. Beyond location estimation, many inference problems can be tackled in a robust and nonparametric way by using the center-outward order resulting from depth (\citealp{Liuetal1999}). Adopting the parametric depth approach from \cite{Miz2002}, $D(\theta,P)$ can also be read as a measure of how well the location parameter value~$\theta$ fits the probability measure~$P$. In this spirit, possible outliers in a data set~$X_1,\ldots,X_n$ will be flagged by low depth values~$D(X_i,P_n)$, where~$P_n$ denotes the corresponding empirical probability measure. 

In this paper, the focus is on multivariate dispersion parameters known as shape matrices. For simplicity, we restrict in this section to elliptical distributions. Let~$\mathcal{P}_k$ be the collection of $k\times k$ symmetric positive definite matrices and write~$A^{1/2}$, with~$A\in\mathcal{P}_k$, for the unique square root of~$A$ in~$\mathcal{P}_k$. We will say that~$P=P^X$ is elliptical with location~$\theta\in\R^k$, scatter~$\Sigma\in\mathcal{P}_k$ and generating variate~$R$ if~$X$ has the same distribution as~$\theta+R\Sigma^{1/2}U$, where~$U$ is uniformly distributed over~$\mathcal{S}^{k-1}$ and is independent of the nonnegative scalar random variable~$R$, which has unit median. This median constraint makes~$\Sigma$ identifiable without moment conditions. Under finite second-order moments, the resulting covariance matrix is~$\Sigma_P=\{E(R^2)/k\}\Sigma$. Inference problems such as constructing confidence regions for~$\theta$ require one to estimate the full scatter matrix~$\Sigma$ or the full covariance matrix~$\Sigma_P$. However, in many other problems, it is sufficient to estimate the shape matrix, that is, the normalized scatter matrix
$$
V
=
\frac{k}{{\rm tr}(\Sigma)}\,\Sigma
=
\frac{k}{{\rm tr}(\Sigma_P)}\,\Sigma_P
.
$$ 
This shape matrix~$V$ could be normalized, as in \cite{Pai2008}, to have determinant one or upper-left entry one, which would not affect the results of the present paper. 
 For instance, principal components may be equivalently computed from~$V$, from~$\Sigma$ or, when it exists, from~$\Sigma_P$, since proportional matrices have the same eigenvectors. Now, when it comes to fixing the number of principal components on which to base further analysis, one typically looks at the proportions of explained variances 
$
p_m(\Sigma_P)
=
\sum_{\ell=1}^m \lambda_\ell(\Sigma_P)/\sum_{\ell=1}^k \lambda_\ell(\Sigma_P)
$ 
 ($m=1,\ldots,k$), where~$\lambda_\ell(A)$ denotes the $\ell$th largest eigenvalue of~$A$. Similarly to eigenvectors, these proportions remain unchanged if they are computed from~$V$ rather than from~$\Sigma$ or~$\Sigma_P$. In principal component analysis it is thus sufficient to estimate, or know the value of,~$V$.  

There is a large literature on inference for shape. Our main contribution is to provide a depth concept for shape, measuring how well a given shape matrix~$V$ fits the probability measure~$P$. While the proposed depth will lead to estimators and tests for shape, its main added value is the ordering of shape matrices resulting from depth. Here, we mention only two possible applications. The first is in principal component analysis, where a suitable estimator~$\hat{V}$ is to be chosen. When it is suspected that there might be outliers, one might for instance consider the minimum covariance determinant estimates~$\hat{V}_\gamma$, $\gamma\in[0.5,1]$, trimming a proportion~$1-\gamma$ of the data; see $\S$~\ref{Secapplis}. Choosing~$\gamma$ should typically be done on the basis of the proportion of outliers, which is usually unknown. We will show that the shape depth of~$\hat{V}_{\gamma}$ allows for an informed choice on~$\gamma$. The second application concerns outlier detection in multivariate financial times series. Since volatility is key in finance, one might flag atypical days in such series by spotting days that associate a low depth to a shape estimator~$\hat V_{\rm full}$ computed from the full series.

Depth for a generic parameter has been discussed in~\cite{Miz2002}. Depth for scatter matrices, however, has only been considered in \cite{Zha2002}, \cite{Chenetal16} and \cite{PVB17}, and only the last considers depth for shape matrices.


\section{Shape depth}
\vspace{0mm}

\cite{Tyl1987} introduced a shape notion extending the concept of shape outside the elliptical setup. Consider the multivariate sign~$U_{\theta,V}$ defined as~$V^{-1/2}(X-\theta)/\|V^{-1/2}(X-\theta)\|$ if~$X\neq \theta$ and as~$0$ otherwise, where~$V^{-1/2}$ is the inverse of~$V^{1/2}$.
%
Let also~$W_{\theta,V}={\rm vec}\{ U_{\theta,V} U^\T_{\theta,V} - (1/k) I_k\}$, where ${\rm vec}\,A$ stacks the columns of~$A$ on top of each other and where~$I_k$ is the $k\times k$ identity matrix. The Tyler shape of~$P=P^X$, $V_T$ say, is then the matrix~$V\in\mathcal{P}_{k,{\rm tr}}=\{V\in\mathcal{P}_k:~{\rm tr}(V)=k \}$ satisfying
\begin{equation}
\label{tyler}	
E 
(
W_{\theta,V}
)
=
0
.
\end{equation} 
If~$P$ is smooth at~$\theta$, in the sense that no hyperplane containing $\theta$ has a strictly positive \mbox{$P$-probability} mass, then~(\ref{tyler}) admits a unique solution~$V\in\mathcal{P}_{k,{\rm tr}}$ that agrees with the true shape if~$P$ is elliptical with location~$\theta$ (\citealp{Tyl1987}; \citealp{KenTyl1988}; \citealp{Dum1998}).
In essence, (\ref{tyler}) identifies the shape~$V$ making the origin of~$\R^{k^2}$ most central in an~$L_2$-sense for the distribution~$P^{W_{\theta,V}}$ of~$W_{\theta,V}$, that is, it defines $V_T$ as the solution of
\begin{equation}
\label{heodsh}	
0
=
\arg\min_{m\in\R^{k^2}}
 E
(
\left\|W_{\theta,V}-m\right\|^2
).
\end{equation}

The present work finds its source in the idea that one may define the shape of~$P$ as the matrix~$V\in\mathcal{P}_{k,{\rm tr}}$ making the origin of~$\R^{k^2}$ most central for the distribution of $W_{\theta, V}$, in the halfspace depth sense, that is, as the value of~$V$ maximizing the following depth.

\begin{definition}[Tyler shape depth]
\label{shapedepthD}
Let~$P=P^X$ be a probability measure over~$\R^k$ and fix~$V\in\mathcal{P}_{k,{\rm tr}}$. (i) For any~$\theta\in\R^k$, the fixed-$\theta$ shape depth of~$V$ with respect to~$P$ 
is
$
D_\theta(V,P)
=
D(0,P^{W_{\theta,V}}) 
=
 \inf_{u\in\mathcal{S}^{k^2-1}}
 {\rm pr}(
  u^\T 
W_{\theta,V} 
 \geq 0 
)
$.
(ii) 
The shape depth of~$\hspace{.2mm}V$ with respect to~$P$ 
is~$D(V,P)=D_{\theta_P}(V,P)$, where~$\theta_P$ is the Tukey median of~$P$. 
\end{definition}

We will use the notation~$D(\cdot,P)$ for both halfspace and Tyler shape depths, as the vector or matrix nature of the argument will remove any ambiguity. The fixed-$\theta$ shape depth can equivalently be defined 	as
$
D_\theta(V,P)
=
 \inf_{M}
 {\rm pr}\{
U_{\theta,V}^\T M U_{\theta,V} 
 -  {{\rm tr}(M)/k}  
 \geq 0  
\}
,
$
where the infimum is over all $k\times k$ symmetric matrices~$M$; see Lemma~1 in the Supplementary Material.
While, in view of~(\ref{heodsh}),~$V_T$ can be seen as a sign-based mean concept for shape, the maximizer of Tyler shape depth is of a median nature. The main benefit of the proposed depth does not come from the deepest shape itself but rather from the ranking of shapes it provides; see $\S$~\ref{Secapplis}. 

Definition~\ref{shapedepthD}(ii) calls for some comments. 
Two approaches were considered in the literature for Tyler shape in the case of unspecified center: the \cite{Tyl1987} plug-in approach, which replaces the unknown~$\theta$ with some location functional, and the \cite{HetRan2002} approach, which jointly solves
$
E
(
U_{\theta,V} 
)
=
0
$ and $
E
(
W_{\theta,V}
) 
=
0
$; 
existence of a unique solution to joint location and scatter M-estimating equations was studied in \cite{Mar1976} under ellipticity and in \cite{TatTyl2000} for non-elliptic cases. Both approaches provide two distinct shapes outside the elliptical setup. In contrast, for the proposed depth, the plug-in and joint maximization approaches always lead to the same shape: irrespective of $\lambda$, the objective function $(\theta,V)\mapsto D(0,P^{U_{\theta,V}})+\lambda D(0,P^{W_{\theta,V}})$ is indeed maximized 
\vspace{-.8mm}
at $\theta=\theta_P$ and $V=\arg\max_V D(0,P^{W_{\theta_P,V}})$, since $D\left(0,P^{U_{\theta,V}}\right)=D(0,P^{V^{-1/2}(X-\theta)})=D(\theta,P^X)$ is, for any $V$, maximized at~$\theta=\theta_P$.

An alternative way to obtain an unspecified location version of Tyler shape is to construct it on pairwise differences (\citealp{Dum1998}). We will not investigate this for our shape depth, since the sample version of the resulting depth would lead to a much heavier computational burden.


\section{Main properties}
\vspace{0mm}

In this section, we study the main properties of the shape depth~$D_\theta(V,P)$ and of the corresponding depth regions~$R_{\theta}(\alpha,P)=\{ V\in \mathcal{P}_{k,{\rm tr}}: D_\theta(V,P)\geq \alpha\}$. Topological statements for subsets of~$\mathcal{P}_{k,{\rm tr}}$ and for functions defined on~$\mathcal{P}_{k,{\rm tr}}$ will refer to the topology whose open sets are generated by balls of the form $B(V_0,r)=\{ V\in\mathcal{P}_{k,{\rm tr}} : d(V,V_0)<r\}$, where~$d$ is the usual geodesic distance   on~$\mathcal{P}_k$: with the classical log mapping on~$\mathcal{P}_k$, this distance is such that~$
d(V_a,V_b)=\| \log( V_a^{-1/2} V_b V_a^{-1/2} ) \|_F 
$, where~$\|A\|_F=\{{\rm tr}(AA^\T)\}^{1/2}$ is the Frobenius norm of~$A$ (\citealp{Bha07}). We start with the following continuity result.

\begin{theorem}
\label{Theorcontinuity}
Let~$P$ be a probability measure over~$\R^k$ and fix~$\theta\in\R^k$.
Then, (i)  $V\mapsto D_\theta(V,P)$ is upper semicontinuous on~$\mathcal{P}_{k,{\rm tr}}$; (ii) the depth region~$R_{\theta}(\alpha,P)$ is closed for any~\mbox{$\alpha\geq 0$}; (iii) if $P$ is absolutely continuous with respect to the Lebesgue measure, then $V\mapsto D_\theta(V,P)$ is also lower semicontinuous, hence continuous, on~$\mathcal{P}_{k,{\rm tr}}$. 
\end{theorem}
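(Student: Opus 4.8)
The plan is to reduce everything to the behaviour of the function $(u,V)\mapsto {\rm pr}(u^\T W_{\theta,V}\geq 0)$ and exploit the compactness of $\mathcal{S}^{k^2-1}$. For part (i), fix $V_0\in\mathcal{P}_{k,{\rm tr}}$ and a sequence $V_n\to V_0$. For each $n$ pick a near-minimizing direction $u_n\in\mathcal{S}^{k^2-1}$ with ${\rm pr}(u_n^\T W_{\theta,V_n}\geq 0)\leq D_\theta(V_n,P)+1/n$; passing to a subsequence, $u_n\to u_\star$ for some $u_\star\in\mathcal{S}^{k^2-1}$. The key analytic input is that the map $V\mapsto W_{\theta,V}$ is continuous (pointwise in $X\ne\theta$), so $u_n^\T W_{\theta,V_n}\to u_\star^\T W_{\theta,V_0}$ pointwise off $\{\theta\}$. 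For the closed halfspace $\{w:u_\star^\T w\geq 0\}$ a portmanteau/Fatou argument gives $\limsup_n {\rm pr}(u_n^\T W_{\theta,V_n}\geq 0)\leq {\rm pr}(u_\star^\T W_{\theta,V_0}\geq 0)+P(\{\theta\})$; a short separate remark handles the atom at $\theta$ (it contributes $W=0$, which lies in every closed halfspace, so it does not create an issue — one must be slightly careful, but the inequality $\limsup_n D_\theta(V_n,P)\leq {\rm pr}(u_\star^\T W_{\theta,V_0}\geq 0)\leq$ whatever bound we need goes through once one notes $0$ is on the boundary of every such halfspace). Hence $\limsup_n D_\theta(V_n,P)\leq {\rm pr}(u_\star^\T W_{\theta,V_0}\geq 0)$, but this is not immediately $\leq D_\theta(V_0,P)$ unless $u_\star$ is a minimizer — so the argument must instead be run the other way: show $D_\theta(V_0,P)\geq\limsup_n D_\theta(V_n,P)$ by taking, for the \emph{upper} semicontinuity, a direction that nearly achieves the infimum at $V_0$ is not what we want either. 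The correct route is: for any $u$, $V\mapsto {\rm pr}(u^\T W_{\theta,V}\geq 0)$ is upper semicontinuous (closed halfspace + weak convergence of $W_{\theta,V_n}\Rightarrow W_{\theta,V_0}$), and an infimum of upper semicontinuous functions is upper semicontinuous; this is clean and I would present it that way.

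For part (ii), upper semicontinuity of $V\mapsto D_\theta(V,P)$ immediately gives that the superlevel set $R_\theta(\alpha,P)=\{V:D_\theta(V,P)\geq\alpha\}$ is closed in $\mathcal{P}_{k,{\rm tr}}$, for every $\alpha\geq 0$. This is a one-line deduction from (i) and needs no further work.

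For part (iii), assume $P\ll{\rm Leb}$. I would prove lower semicontinuity by showing that for each fixed $u$ the function $V\mapsto {\rm pr}(u^\T W_{\theta,V}\geq 0)$ is in fact continuous, and then that the infimum over the compact sphere is attained and depends lower semicontinuously on $V$. Continuity at a fixed $u$: the possible discontinuity of $V\mapsto {\rm pr}(u^\T W_{\theta,V}\geq 0)$ comes only from mass sitting exactly on the hyperplane $\{u^\T W_{\theta,V}=0\}$. Unwinding the definition of $W_{\theta,V}$ via the equivalent formulation stated after Definition~\ref{shapedepthD} (with a symmetric matrix $M={\rm vec}^{-1}(u)$, symmetrized), the event $u^\T W_{\theta,V}=0$ is $\{U_{\theta,V}^\T M U_{\theta,V}={\rm tr}(M)/k\}$, i.e. $X-\theta$ lies on the cone $\{x:(V^{-1/2}x)^\T M (V^{-1/2}x)=({\rm tr}(M)/k)\|V^{-1/2}x\|^2\}$, a (possibly degenerate) quadratic cone through the origin, which is Lebesgue-null unless $M$ is a scalar multiple of $V$ — and when $M\propto V$ the set is either empty or all of $\R^k$, in which case ${\rm pr}$ is locally constant. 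Absolute continuity then kills every boundary contribution, so $V\mapsto{\rm pr}(u^\T W_{\theta,V}\geq 0)$ is continuous. To pass from pointwise (in $u$) continuity to lower semicontinuity of the infimum, combine with the upper semicontinuity from (i): given $V_n\to V_0$, pick minimizers $u_n$ (the infimum is attained since $u\mapsto {\rm pr}(u^\T W_{\theta,V_0}\geq 0)$ is lower semicontinuous on the compact sphere — standard for closed halfspaces), extract $u_n\to u_\star$, and use joint continuity of $(u,V)\mapsto {\rm pr}(u^\T W_{\theta,V}\geq 0)$ at $(u_\star,V_0)$ — which follows from equicontinuity, itself a consequence of the uniform (in $u$) absolute-continuity estimate, e.g. $\sup_u {\rm pr}(0\leq u^\T W_{\theta,V}<\varepsilon)\to 0$ — to get $\liminf_n D_\theta(V_n,P)=\liminf_n {\rm pr}(u_n^\T W_{\theta,V_n}\geq 0)\geq {\rm pr}(u_\star^\T W_{\theta,V_0}\geq 0)\geq D_\theta(V_0,P)$.

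The main obstacle is the joint continuity step in (iii): one really needs the boundary mass ${\rm pr}(0\leq u^\T W_{\theta,V}<\varepsilon)$ to go to $0$ \emph{uniformly} in $u\in\mathcal{S}^{k^2-1}$ as $\varepsilon\to 0$, since the minimizing direction moves with $V_n$. I expect this to follow from a compactness argument: the family of thin slabs $\{w: 0\le u^\T w<\varepsilon\}$ pulled back to $x$-space is a compact family of shrinking quadratic shells, and dominated convergence along the compact sphere $\mathcal{S}^{k^2-1}$ gives the uniformity; absolute continuity of $P$ is exactly what is needed to make each limiting shell null. Everything else — parts (i) and (ii) — is soft and follows the classical halfspace-depth template, the only genuinely new ingredient being the continuity of $V\mapsto W_{\theta,V}$ and of the induced pushforward measures, which is elementary.
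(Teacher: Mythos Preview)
Your approach to (i) and (ii) is correct and matches the paper's: for each fixed direction~$u$ (equivalently, each symmetric~$M$), the set $\{u^\T W_{\theta,V}\geq 0\}$ is closed, so portmanteau gives upper semicontinuity of each term, and the infimum of upper semicontinuous functions is upper semicontinuous. Your initial detour in (i) --- picking near-minimizers at~$V_n$ --- indeed goes the wrong way, and you were right to discard it.

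For (iii), your high-level strategy (extract a convergent subsequence of near-minimizing directions, pass to the limit) is exactly what the paper does. But you misidentify the analytic obstacle and, as a result, propose to prove something that is both stronger than needed and actually false. The uniform estimate $\sup_{u\in\mathcal{S}^{k^2-1}}{\rm pr}(0\leq u^\T W_{\theta,V}<\varepsilon)\to 0$ fails: take $u={\rm vec}(I_k)/\sqrt{k}$, for which $u^\T W_{\theta,V}\equiv 0$, so the probability is~$1$ for every~$\varepsilon>0$. More generally, directions corresponding to nearly-scalar~$M$ keep the boundary mass bounded away from zero. So equicontinuity in~$u$ cannot be the mechanism.

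What the paper does instead is simpler and avoids any uniformity. Having extracted $M_{n_\ell}\to M_0$ (with $\|M_0\|_F=1$) and $V_{n_\ell}\to V_0$, observe that for each fixed $x\neq\theta$ the real number $(u^x_{\theta,V_{n_\ell}})^\T M_{n_\ell}\, u^x_{\theta,V_{n_\ell}}-{\rm tr}(M_{n_\ell})/k$ converges to the corresponding quantity at $(M_0,V_0)$. Hence the indicator of $\tilde{C}^{M_{n_\ell}}_{\theta,V_{n_\ell}}$ converges pointwise to that of $\tilde{C}^{M_0}_{\theta,V_0}$ at every~$x$ not on the \emph{limiting} boundary $\{x:(u^x_{\theta,V_0})^\T M_0\, u^x_{\theta,V_0}={\rm tr}(M_0)/k\}$. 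That boundary is a quadratic cone in $x-\theta$, hence Lebesgue-null (and therefore $P$-null by absolute continuity), unless $M_0$ is a scalar multiple of~$I_k$ --- not of~$V$, as you wrote. Dominated convergence (the indicators are bounded by~$1$) then gives ${\rm pr}(\tilde{C}^{M_{n_\ell}}_{\theta,V_{n_\ell}})\to{\rm pr}(\tilde{C}^{M_0}_{\theta,V_0})\geq D_\theta(V_0,P)$, which is exactly the lower semicontinuity inequality. No joint continuity, no equicontinuity, no uniform boundary control --- just pointwise a.e.\ convergence of indicators plus DCT along the one subsequence you already hold. The degenerate scalar-$M_0$ case can be sidestepped by taking the near-minimizers in the compact set of \emph{traceless} unit-norm symmetric matrices, which Lemma~\ref{LemrewritingD} permits; then $M_0\neq 0$ traceless is automatically non-scalar. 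Finally, you do not need the infimum in~$u$ to be attained (your lower semicontinuity claim for $u\mapsto{\rm pr}(u^\T W\geq 0)$ is in the wrong direction anyway); near-minimizers with slack $1/n$ suffice throughout.
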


We will say that a subset~$R$ of~$\mathcal{P}_{k,{\rm tr}}$ is bounded if and only if~$R\subset B(I_k,r)$ for some~$r>0$; since~$d$ satisfies the triangle inequality, we need only consider balls centered at~$I_k$. 
Moreover, we will say that $P$ is smooth at~$\theta$ if and only if~$t_{\theta,P}=0$, with~$t_{\theta,P}=\sup_{u\in\mathcal{S}^{k-1}} {\rm pr}\{u^\T(X-\theta)=0\}$. We then have the following result. 

\begin{theorem}
\label{Theorboundedness}
Let~$P$ be a probability measure over~$\R^k$ and fix~$\theta\in\R^k$. Then the depth region~$R_{\theta}(\alpha,P)$ is bounded and compact for any~$\alpha>t_{\theta,P}$.
\end{theorem}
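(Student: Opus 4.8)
The plan is to show that as a shape matrix $V$ degenerates — i.e., as $d(V,I_k)\to\infty$, equivalently as the condition number of $V$ blows up — the fixed-$\theta$ shape depth $D_\theta(V,P)$ drops to at most $t_{\theta,P}$, so that no $V$ outside a bounded ball can lie in $R_\theta(\alpha,P)$ when $\alpha>t_{\theta,P}$. Compactness then follows immediately: $R_\theta(\alpha,P)$ is closed by Theorem~\ref{Theorcontinuity}(ii), and a closed bounded subset of the complete metric space $(\mathcal{P}_{k,\mathrm{tr}},d)$ is compact (the relevant sets are closed subsets of geodesic balls, which are compact in the nonpositively curved manifold $\mathcal{P}_k$). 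So the real content is the boundedness claim.

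First I would reduce to a statement about directions. Since $U_{\theta,V}=V^{-1/2}(X-\theta)/\|V^{-1/2}(X-\theta)\|$ depends on $X$ only through $X-\theta$ and only through its direction, and by affine equivariance of halfspace depth we may as well pull the $V^{-1/2}$ into the test functional, so that (using the equivalent matrix-$M$ form of $D_\theta$ recalled after Definition~\ref{shapedepthD}) for a fixed symmetric $M$ the defining probability is $\mathrm{pr}\{U_{\theta,V}^\T M U_{\theta,V}\ge \mathrm{tr}(M)/k\}$. The strategy is to pick, for a given degenerate $V$, a cleverly chosen test matrix $M=M_V$ that witnesses low depth. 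Write $V=\sum_{j=1}^k \lambda_j \, e_j e_j^\T$ with $\lambda_1\ge\cdots\ge\lambda_k>0$ and $\sum\lambda_j=k$; degeneration of $V$ forces $\lambda_k\to 0$ (and/or $\lambda_1\to\infty$). A natural choice is $M_V=V^{-1}$, or a spectral projector onto the small eigendirections: the point is that $U_{\theta,V}^\T V^{-1} U_{\theta,V}$ is a ratio that, for most realizations of $X$ not lying on the hyperplane $\{e_k^\T(x-\theta)=0\}$, concentrates on the large-$V^{-1}$-eigenvalue directions, so the event $U_{\theta,V}^\T M_V U_{\theta,V}\ge \mathrm{tr}(M_V)/k$ becomes rare. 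Concretely, I would show $U_{\theta,V}^\T V^{-1}U_{\theta,V}=\|V^{-1}(X-\theta)\|^2/\|V^{-1/2}(X-\theta)\|^2$, bound this below by roughly $\lambda_k^{-1}$ times a factor measuring how much mass of $X-\theta$ lies near the $e_k$-axis, and compare with $\mathrm{tr}(V^{-1})/k\ge \lambda_k^{-1}/k$: for $X$ in the complement of a small neighbourhood of the hyperplane $e_k^\T(x-\theta)=0$ the inequality is reversed, so the depth-defining probability for this $M_V$ is at most $\mathrm{pr}\{|e_k^\T(X-\theta)|\le \varepsilon\}$ plus a controllable remainder.

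The key step, and the main obstacle, is making the last estimate uniform and squeezing it down to $t_{\theta,P}$ rather than just "something small." The quantity $t_{\theta,P}=\sup_u \mathrm{pr}\{u^\T(X-\theta)=0\}$ is the supremal atom of $P$ on hyperplanes through $\theta$; I need that for any $\alpha>t_{\theta,P}$ there is $r$ with $D_\theta(V,P)\le\alpha$ whenever $d(V,I_k)>r$. This requires (a) choosing the test matrix adapted not merely to the smallest eigendirection of $V$ but possibly to the full block of eigenvalues tending to $0$, so that the "bad" hyperplane is replaced by the low-dimensional subspace spanned by those eigenvectors, and then (b) a compactness argument over the Grassmannian of such subspaces together with the fact that $\mathrm{pr}\{X-\theta \in L\}\le (\dim L)$-fold use of $t_{\theta,P}$ — more carefully, that $\sup\{\mathrm{pr}(X-\theta\in H): H\text{ a hyperplane through }\theta\}=t_{\theta,P}$ and that the probability of lying within distance $\varepsilon$ of $H$ converges down to $\mathrm{pr}(X-\theta\in H)$ as $\varepsilon\downarrow 0$, uniformly in $H$ by a standard compactness-plus-monotone-convergence argument. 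Handling the case where several eigenvalues collapse simultaneously, and getting the $\varepsilon\to 0$ limit uniform in the direction $e_k$ (or in the collapsing subspace), is where the care is needed; the single-eigenvalue case is essentially the picture above and the multi-eigenvalue case is the same idea applied to a projector $M_V=\Pi_{L_V}$ onto the span of eigenvectors with small eigenvalues. Once that uniform bound is in hand, boundedness of $R_\theta(\alpha,P)$ is immediate, and compactness follows as noted.
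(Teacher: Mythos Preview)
Your overall architecture is right and matches the paper's: pick a test matrix $M_V$ adapted to the spectrum of $V$, bound the resulting depth-defining probability by the probability that $X-\theta$ lies in a thin neighbourhood of a hyperplane through $\theta$, and then invoke a compactness-on-the-sphere argument to show that such neighbourhood probabilities converge down to $t_{\theta,P}$ uniformly in the direction. Closedness plus boundedness in the complete metric space $(\mathcal{P}_{k,\mathrm{tr}},d)$ then gives compactness, exactly as you say.

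There is, however, a direction error in your concrete choice of $M_V$. With $M_V=V^{-1}$ (or the projector onto the \emph{small} eigendirections of $V$), the event $U_{\theta,V}^\T M_V U_{\theta,V}\ge \mathrm{tr}(M_V)/k$ is not rare as $\lambda_k\to 0$: when $e_k^\T(X-\theta)\neq 0$, both $\|V^{-1}(X-\theta)\|^2/\|V^{-1/2}(X-\theta)\|^2$ and $\mathrm{tr}(V^{-1})/k$ are of order $\lambda_k^{-1}$, but the former dominates (by a factor of roughly $k$), so the event \emph{holds} rather than fails. You would need $-M_V$, not $M_V$, to witness low depth. This is harmless in principle since the infimum is over all symmetric $M$, but it shows the computation was not carried through.

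More importantly, the paper's choice sidesteps the complications you anticipate. The paper takes $M_V = v_1(V)v_1(V)^\T$, the rank-one projector onto the \emph{largest} eigendirection of $V$, and obtains in a few lines the clean bound
\[
D_\theta(V,P)\ \le\ t_{\theta,P}\big(\{k\lambda_k(V)\}^{1/2}\big),
\qquad
t_{\theta,P}(c)=\sup_{v\in\mathcal{S}^{k-1}}\mathrm{pr}\big(|v^\T u_\theta^X|\le c\big),
\]
which depends on $V$ only through $\lambda_k(V)$. Since $d(V,I_k)\to\infty$ with $\mathrm{tr}(V)=k$ forces $\lambda_k(V)\to 0$, and a short compactness argument on $\mathcal{S}^{k-1}$ (the paper's Lemma~\ref{lemtp}) gives $t_{\theta,P}(c)\downarrow t_{\theta,P}$ as $c\downarrow 0$, boundedness follows immediately. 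In particular there is no need to treat the multi-eigenvalue collapse separately or to bring in Grassmannian compactness: the single inequality above handles every mode of degeneration uniformly, precisely because the only spectral information it uses is $\lambda_k(V)$. Your $M_V=V^{-1}$ route (after the sign correction) could in principle be pushed through, but the analysis is messier exactly because the resulting bound then entangles all the small eigenvalues at once.
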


The main reason to work with geodesic distance rather than Frobenius distance $d_F(V_1,V_2)=\|V_2-V_1\|_F$ is that, unlike~$(\mathcal{P}_{k,{\rm tr}},d_F)$, the metric space~$(\mathcal{P}_{k,{\rm tr}},d)$ is complete; see, e.g., Proposition~10 in \cite{BhaHol06}. This is what allows us to establish compacity in Theorem~\ref{Theorboundedness}, which is the main ingredient for the following result.

\begin{theorem}
\label{Theorexistmax}
Let~$P$ be a probability measure over~$\R^k$ and fix~$\theta\in\R^k$. (i) If~$R_{\theta}(t_{\theta,P},P)$ is non-empty, then there exists a shape~$V_*\in\mathcal{P}_{k,{\rm tr}}$ maximizing~$D_\theta(V,P)$. In particular, (ii) if~$P$ is smooth at~$\theta$, then such a deepest shape~$V_*$ exists.    
\end{theorem}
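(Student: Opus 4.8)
The plan is to deduce this from Theorems 2.1 and 2.2 using a standard compactness argument. For part (i), assume $R_\theta(t_{\theta,P},P)$ is non-empty and set $\alpha_* = \sup_{V\in\mathcal{P}_{k,{\rm tr}}} D_\theta(V,P)$; note $\alpha_* \geq t_{\theta,P}$ precisely because the region at level $t_{\theta,P}$ is non-empty, and $\alpha_* \leq 1$ trivially. The first step is to show the supremum is attained. If $\alpha_* = t_{\theta,P}$ (which can happen, e.g., when no shape does better than the trivial bound coming from point masses on hyperplanes), then any element of the non-empty set $R_\theta(t_{\theta,P},P)$ is already a maximizer, so there is nothing to prove. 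So assume $\alpha_* > t_{\theta,P}$. Take a sequence $V_j \in \mathcal{P}_{k,{\rm tr}}$ with $D_\theta(V_j,P) \to \alpha_*$; discarding finitely many terms we may assume $D_\theta(V_j,P) > (t_{\theta,P}+\alpha_*)/2 =: \beta > t_{\theta,P}$ for all $j$, so that all $V_j$ lie in $R_\theta(\beta,P)$. By Theorem~2.2, $R_\theta(\beta,P)$ is compact in $(\mathcal{P}_{k,{\rm tr}},d)$, hence (as this is a metric space) sequentially compact, so a subsequence $V_{j_\ell}$ converges to some $V_* \in R_\theta(\beta,P) \subseteq \mathcal{P}_{k,{\rm tr}}$. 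By the upper semicontinuity of $V\mapsto D_\theta(V,P)$ from Theorem~2.1(i), $D_\theta(V_*,P) \geq \limsup_\ell D_\theta(V_{j_\ell},P) = \alpha_*$, and since $\alpha_*$ is the supremum we get $D_\theta(V_*,P) = \alpha_*$. Thus $V_*$ maximizes $D_\theta(\cdot,P)$.

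For part (ii), suppose $P$ is smooth at $\theta$, i.e. $t_{\theta,P}=0$. Then $R_\theta(t_{\theta,P},P) = R_\theta(0,P) = \mathcal{P}_{k,{\rm tr}}$, which is non-empty (it contains $I_k$), so the hypothesis of part (i) is automatically satisfied and the existence of a deepest shape $V_*$ follows immediately. One might also remark, for orientation, that when $t_{\theta,P}=0$ the relevant level $\beta$ in the argument above is strictly positive as soon as $\alpha_*>0$, and $\alpha_*>0$ indeed holds here — e.g. $D_\theta(I_k,P) \geq 1/k^2$ or a similar crude bound — so the compact region $R_\theta(\beta,P)$ is genuinely being used.

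The only real subtlety is the degenerate case $\alpha_* = t_{\theta,P}$ in part (i): there the compactness/upper-semicontinuity machinery does not directly apply (the sets $R_\theta(\alpha,P)$ with $\alpha > \alpha_*$ are empty, and $R_\theta(\alpha_*,P)$ need not be compact, only closed by Theorem~2.1(ii)), so one must observe separately that the hypothesis hands us a maximizer for free. Everything else is the routine "u.s.c.\ function on a compact set attains its max" reasoning, with the one point worth double-checking being that geodesic-metric compactness gives sequential compactness — which it does, since $(\mathcal{P}_{k,{\rm tr}},d)$ is a metric space. I would also take care to phrase the passage to the sublevel (superlevel) set $R_\theta(\beta,P)$ with a $\beta$ strictly between $t_{\theta,P}$ and $\alpha_*$, since Theorem~2.2 requires the level to be $>t_{\theta,P}$, not merely $\geq$.
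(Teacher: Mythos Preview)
Your proof is correct and follows essentially the same route as the paper: split into the cases $\alpha_*=t_{\theta,P}$ and $\alpha_*>t_{\theta,P}$, and in the latter case trap a maximizing sequence in a compact region $R_\theta(\beta,P)$ with $\beta\in(t_{\theta,P},\alpha_*)$, extract a convergent subsequence, and conclude via upper semicontinuity (the paper phrases this last step equivalently through closedness of the nested depth regions). Your side remark that $D_\theta(I_k,P)\geq 1/k^2$ is not needed for the argument and is not established in the paper, so I would drop it.
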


While the previous result guarantees existence of a deepest shape for absolutely continuous probability measures, uniqueness is not guaranteed in general. Parallel to what is done for the Tukey median, we then define the fixed-$\theta$ shape matrix of~$P$ as the barycenter of the deepest shape region of~$P$, that is, as the shape matrix~$V_{\theta,P}$ satisfying
\begin{equation}
\label{barydef}	
{\rm vec}\,V_{\theta,P}
=
{\int_{{\rm vec}\,R_\theta(\alpha_*,P)} v \, dv}
\Big/
{\int_{{\rm vec}\,R_\theta(\alpha_*,P)} dv}
,
\end{equation}
with~$\alpha_*=\max_V D_\theta(V,P)$. Two remarks are in order. First, the integrals in~(\ref{barydef}) exist and are finite since~${\rm vec}\,\mathcal{P}_{k,{\rm tr}}$ is a bounded subset of~$\R^{k^2}$: $0\leq V^2_{ij} < V_{ii} V_{jj}\leq k^2$ for any~$V
\in\mathcal{P}_{k,{\rm tr}}$. Second,~the following convexity result implies that $V_{\theta,P}$ has maximal depth.

\begin{theorem}
	\label{TheorQuasic}
Let~$P$ be a probability measure over~$\R^k$ and fix~$\theta\in\R^k$.
Then, (i)  $V\mapsto D_\theta(V,P)$ is quasi-concave: $D_\theta(V_t,P) \geq \min\{D_\theta(V_a,P),D_\theta(V_b,P)\}$ for~$V_t=(1-t)V_a+t V_b$ with~$V_a,V_b\in \mathcal{P}_{k,{\rm tr}}$ and~$t\in[0,1]$; (ii) the region~$R_{\theta}(\alpha,P)$ is convex  for any~$\alpha\geq 0$.
\end{theorem}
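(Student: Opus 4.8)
The plan is to establish (i) and then deduce (ii), since a real-valued function is quasi-concave exactly when all its upper level sets are convex, and $\mathcal{P}_{k,{\rm tr}}$ is itself convex (a convex combination of positive definite matrices of trace $k$ is again positive definite of trace $k$), so that $R_\theta(\alpha,P)$ is the intersection of $\mathcal{P}_{k,{\rm tr}}$ with the superlevel set $\{V:D_\theta(V,P)\geq\alpha\}$.

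First I would rewrite $D_\theta(V,P)$ in a form that isolates the role of $V$. Starting from the equivalent expression recalled after Definition~\ref{shapedepthD}, namely $D_\theta(V,P)=\inf_{M}{\rm pr}\{U_{\theta,V}^\T M U_{\theta,V}-{\rm tr}(M)/k\geq 0\}$ over symmetric $M$, I would multiply the inequality inside the probability by the positive quantity $(X-\theta)^\T V^{-1}(X-\theta)$ on $\{X\neq\theta\}$ and set $A:=V^{-1/2}\{M-({\rm tr}(M)/k)I_k\}V^{-1/2}$; this $A$ is symmetric with ${\rm tr}(VA)=0$, and conversely every symmetric $A$ with ${\rm tr}(VA)=0$ arises this way, via $M=V^{1/2}AV^{1/2}$. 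After checking that one may restrict the infimum to $M$ with ${\rm tr}(M)>0$ --- so that a possible atom of $P$ at $\theta$, which is treated differently by the ``$M$'' and the ``quadratic form'' forms of the event, never contributes --- and writing $g_\theta(A)={\rm pr}\{(X-\theta)^\T A(X-\theta)\geq 0,\ X\neq\theta\}$ and $\mathcal{H}_V=\{A=A^\T:{\rm tr}(VA)=0\}$, one reaches
\begin{equation*}
D_\theta(V,P)=\inf_{A\in\mathcal{H}_V}g_\theta(A).
\end{equation*}
The advantage is that $g_\theta$ does not depend on $V$: the whole $V$-dependence now sits in the constraint set $\mathcal{H}_V$, a hyperplane through the origin of the space of symmetric matrices with ``normal'' $V$.

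Two facts then drive the proof. First, $g_\theta$ is monotone in the direction $I_k$, i.e. $g_\theta(A-cI_k)\leq g_\theta(A)$ for every $c\geq 0$, because $(X-\theta)^\T(A-cI_k)(X-\theta)\leq(X-\theta)^\T A(X-\theta)$ on $\{X\neq\theta\}$. Second, ${\rm tr}(V)=k$ for all $V\in\mathcal{P}_{k,{\rm tr}}$, so subtracting a multiple of $I_k$ from $A$ shifts its ``height'' ${\rm tr}(VA)$ by a controlled amount. Now fix $V_a,V_b\in\mathcal{P}_{k,{\rm tr}}$ and $t\in(0,1)$ (the endpoints being trivial), put $V_t=(1-t)V_a+tV_b\in\mathcal{P}_{k,{\rm tr}}$, and let $A\in\mathcal{H}_{V_t}$, so $(1-t){\rm tr}(V_aA)+t\,{\rm tr}(V_bA)=0$. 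Since $t\in(0,1)$, either ${\rm tr}(V_aA)={\rm tr}(V_bA)=0$, in which case $A\in\mathcal{H}_{V_a}$ and $g_\theta(A)\geq D_\theta(V_a,P)$; or exactly one of the two traces is strictly positive, say $\rho:={\rm tr}(V_jA)>0$ with $j\in\{a,b\}$, in which case $A-(\rho/k)I_k\in\mathcal{H}_{V_j}$ (here ${\rm tr}(V_j)=k$ is used) and $g_\theta(A)\geq g_\theta(A-(\rho/k)I_k)\geq D_\theta(V_j,P)$. In either case $g_\theta(A)\geq\min\{D_\theta(V_a,P),D_\theta(V_b,P)\}$, and taking the infimum over $A\in\mathcal{H}_{V_t}$ gives $D_\theta(V_t,P)\geq\min\{D_\theta(V_a,P),D_\theta(V_b,P)\}$, which is (i); part (ii) follows as explained above.

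I expect the delicate point to be the reformulation rather than the concluding argument: one must verify that $M\mapsto A$ is genuinely onto $\mathcal{H}_V$ and, in particular, handle the atom of $P$ at $\theta$ correctly, since $X=\theta$ makes the quadratic form $(X-\theta)^\T A(X-\theta)$ vanish (hence satisfy ``$\geq 0$'') whereas in the ``$M$'' event $X=\theta$ is counted only when ${\rm tr}(M)\leq 0$; the freedom in ${\rm tr}(M)$ resolves this discrepancy. Once $D_\theta(V,P)=\inf_{A\in\mathcal{H}_V}g_\theta(A)$ is available --- with $g_\theta$ independent of $V$, monotone along $I_k$, and ${\rm tr}(V)\equiv k$ on $\mathcal{P}_{k,{\rm tr}}$ --- quasi-concavity follows almost at once.
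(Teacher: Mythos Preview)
Your argument is correct and takes a genuinely different route from the paper's proof. The paper first establishes, as a separate lemma, that for every symmetric~$M$ and every~$y\in\R^k$ the map $V\mapsto {\rm tr}(MV)\,y^\T V^{-1}y$ is quasi-convex; this lemma rests on the weighted harmonic--arithmetic matrix inequality $\big\{(1-s)A^{-1}+sB^{-1}\big\}^{-1}\preceq (1-s)A+sB$ (with a careful case split according to the signs of ${\rm tr}(MV_a)$ and ${\rm tr}(MV_b)$). Quasi-concavity of $D_\theta(\cdot,P)$ then follows from the rewriting
\[
D_\theta(V,P)=\inf_{M}\ {\rm pr}\big\{ (X-\theta)^\T M (X-\theta)\geq (1/k)\,{\rm tr}(MV)\,d_\theta^2(V),\ X\neq\theta\big\},
\]
by applying that lemma pointwise in~$X$.

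Your approach bypasses the matrix inequality entirely. By pushing the substitution one step further to $A=V^{-1/2}\{M-({\rm tr}\,M/k)I_k\}V^{-1/2}$, you obtain $D_\theta(V,P)=\inf_{A\in\mathcal{H}_V}g_\theta(A)$ with an objective $g_\theta$ that is independent of~$V$; the $V$-dependence is confined to the linear constraint ${\rm tr}(VA)=0$. The normalization ${\rm tr}(V)\equiv k$ on~$\mathcal{P}_{k,{\rm tr}}$ then lets you slide any $A\in\mathcal{H}_{V_t}$ along the direction~$I_k$ into whichever of $\mathcal{H}_{V_a},\mathcal{H}_{V_b}$ lies ``below'' it, while $g_\theta$ is monotone along that direction. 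This is a cleaner and more elementary argument than the paper's: it uses only the affine structure of the constraint sets and a pointwise monotonicity, whereas the paper invokes a nontrivial operator inequality. What the paper's route buys is a standalone quasi-convexity lemma for $V\mapsto {\rm tr}(MV)\,y^\T V^{-1}y$ that may be of independent interest; what yours buys is a shorter, self-contained proof of the theorem itself. Your handling of the atom at~$\theta$ is also correct and matches the paper's Lemma~\ref{LemrewritingD}, which shows that the infimum may equivalently be taken over~$\mathcal{M}_k^r$ for any fixed~$r$ once $\{\theta\}$ is excluded from the event.
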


This defines the fixed-$\theta$ shape of a probability measure~$P$ under the very mild condition that~$R_{\theta}(t_{\theta,P},P)$ is non-empty, hence in particular when~$P$ is smooth at~$\theta$. Of course, it is important that, under ellipticity, this agrees with the elliptical concept of shape provided in $\S$\,1. The following Fisher consistency result confirms that this is the case.

\begin{theorem}
\label{TheorFishconst}
Let~$P$ be an elliptical probability measure over~$\R^k$ with location~$\theta_0$ and shape~$V_0$. Then,
$
D_{\theta_0}(V_0,P)
 \geq 
D_{\theta_0}(V,P)
$
for any~$V\in\mathcal{P}_{k,{\rm tr}}$, and, provided \mbox{that~${\rm pr}[\{\theta_0\}]<1$,} the equality holds if and only if 
$V=V_0$. Letting~$Y_k$ be Beta with parameters~$1/2$ and~$(k-1)/2$, the maximal depth is~$D_{\theta_0}(V_0,P)=(1-{\rm pr}[\{\theta_0\}]) {\rm pr}(Y_k>1/k)$. 
\end{theorem}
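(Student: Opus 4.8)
The plan is to reduce everything to the canonical spherical case and exploit the structure of the multivariate sign. First I would note that, by the affine-invariance properties of the construction (established earlier for $D_\theta$, or directly: $U_{\theta_0,V}$ depends on $X$ only through $V^{-1/2}(X-\theta_0)$, whose direction is, under ellipticity with shape $V_0$, distributed as the direction of $V^{-1/2}V_0^{1/2}U$ with $U$ uniform on $\mathcal{S}^{k-1}$), it suffices to prove the statement when $\theta_0=0$ and $V_0=I_k$. In that case $U_{0,I_k}$ is uniform on $\mathcal{S}^{k-1}$ on the event $\{X\neq 0\}$, and we must show that, among all $V\in\mathcal{P}_{k,\mathrm{tr}}$, the sphericity value $V=I_k$ uniquely maximizes
\[
D_0(V,P)=\inf_{M}\ \mathrm{pr}\bigl\{U_{0,V}^\T M\,U_{0,V}-\mathrm{tr}(M)/k\geq 0\bigr\},
\]
the infimum running over symmetric $k\times k$ matrices $M$ (this equivalent form is Lemma~1 of the Supplement).

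The key computation is the value at $V=I_k$. Here $U=U_{0,I_k}$ is uniform on $\mathcal{S}^{k-1}$ (with probability $1-\mathrm{pr}[\{0\}]$) and equals $0$ otherwise. For symmetric $M$ one has $U^\T M U-\mathrm{tr}(M)/k$; diagonalizing $M=O\,\mathrm{diag}(\mu_1,\dots,\mu_k)\,O^\T$ and using rotational invariance of $U$, this is distributed as $\sum_i \mu_i U_i^2-\bar\mu$ where $\bar\mu=\tfrac1k\sum_i\mu_i$ and $(U_1^2,\dots,U_k^2)$ is Dirichlet$(1/2,\dots,1/2)$; in particular each $U_i^2\sim\mathrm{Beta}(1/2,(k-1)/2)=Y_k$. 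The infimum over $M$ is attained in the limit where all mass of $\mu$ concentrates on one coordinate, i.e. $M\to e_1e_1^\T$, giving $\mathrm{pr}(U_1^2\geq 1/k)=\mathrm{pr}(Y_k>1/k)$ (the event $\{U=0\}$ contributes $0<1/k$, hence is excluded). One checks no symmetric $M$ does worse: writing $f(M)=\mathrm{pr}\{U^\T M U\geq \mathrm{tr}(M)/k\}$, scaling and shift invariance reduce to $\mathrm{tr}(M)=0$, $\|M\|=1$, and on that compact set the minimum is a rank-one projection shifted to be trace-free, which is the rank-one direction above. Hence $D_0(I_k,P)=(1-\mathrm{pr}[\{0\}])\,\mathrm{pr}(Y_k>1/k)$.

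The remaining, and main, task is the strict inequality $D_0(V,P)<D_0(I_k,P)$ for $V\neq I_k$ in $\mathcal{P}_{k,\mathrm{tr}}$. The idea is to exhibit a single good test matrix $M=M_V$ for which $\mathrm{pr}\{U_{0,V}^\T M U_{0,V}\geq \mathrm{tr}(M)/k\}$ is strictly below the maximal value. Under $P$ with $V_0=I_k$, the direction $U_{0,V}=V^{-1/2}U/\|V^{-1/2}U\|$ with $U$ uniform; the natural choice is $M=V^{-1}$, so that $U_{0,V}^\T V^{-1}U_{0,V}=\|V^{-1/2}U\|^{-2}U^\T V^{-1}U\cdot(\text{wait, recompute})$ — more cleanly take $M$ proportional to the projection onto the top eigenvector $v_1$ of $V$; then $U_{0,V}^\T M U_{0,V}$ is large exactly when $V^{-1/2}U$ aligns with $v_1$, i.e. when $U$ aligns with $v_1$, but the conditional law of $U$ given alignment is tilted by the factor $\|V^{-1/2}U\|^{-k}$ from the change to the $U_{0,V}$-direction, and this tilting \emph{strictly decreases} the probability of landing in the cap $\{U^\T M U\geq\mathrm{tr}(M)/k\}$ compared with the uniform $U$ unless $V=I_k$. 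Making this precise — computing $\mathrm{pr}\{U_{0,V}^\T M U_{0,V}\geq \mathrm{tr}(M)/k\}$ as an integral over $\mathcal{S}^{k-1}$ against the density proportional to $\|V^{-1/2}u\|^{-k}$ and showing it is $<\mathrm{pr}(Y_k>1/k)$ whenever $V\neq I_k$, with a strict monotonicity argument in the eigenvalues of $V$ — is the hard part. I would handle it by reducing to $V$ diagonal (rotational invariance), treating the function $g(V)=\mathrm{pr}\{U_{0,V}^\T (v_1v_1^\T) U_{0,V}\geq 1/k\}$, differentiating along paths in $\mathcal{P}_{k,\mathrm{tr}}$ that fix the trace, and showing the derivative is non-zero off $I_k$; the condition $\mathrm{pr}[\{\theta_0\}]<1$ (equivalently $\mathrm{pr}[\{0\}]<1$ in canonical form) is exactly what guarantees $U_{0,V}$ is genuinely non-degenerate so that the inequality is strict and the argument does not collapse. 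Finally, one invokes Theorem~3.5's predecessors only in the trivial direction ($I_k$ lies in every depth region down to $t_{0,P}$), and assembles: $D_{\theta_0}(V_0,P)=(1-\mathrm{pr}[\{\theta_0\}])\,\mathrm{pr}(Y_k>1/k)$ with equality in the depth inequality iff $V=V_0$.
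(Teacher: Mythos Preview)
Your overall architecture matches the paper's: reduce via affine invariance to the spherical case $\theta_0=0$, $V_0=I_k$; compute $D_0(I_k,P)$; show $D_0(V,P)$ is strictly smaller for $V\neq I_k$; then transport back. Two points deserve comment.

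First, your computation of $D_0(I_k,P)$ is incomplete at the crucial step. You correctly reduce to $\inf_{\lambda\in\mathcal{S}^{k-1}}\mathrm{pr}\{\sum_\ell\lambda_\ell(U_\ell^2-1/k)\geq 0\}$, but the assertion that ``on that compact set the minimum is a rank-one projection shifted to be trace-free'' is exactly the nontrivial content. The paper does not prove this either: it invokes Theorem~2 of \cite{PVB17b} for the fact that this infimum equals $\mathrm{pr}(U_1^2>1/k)$. You should be aware that this is not a one-line compactness observation; it requires a separate argument about Dirichlet probabilities.

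Second, and more importantly, you badly overcomplicate the strict inequality, which you call ``the hard part''. Your instinct to test with $M=v_1v_1^\T$ (projection onto the top eigenvector of $V$) is exactly right, but the subsequent plan---tilted densities $\|V^{-1/2}u\|^{-k}$ on the sphere, differentiation along paths in $\mathcal{P}_{k,\mathrm{tr}}$---is unnecessary and, as stated, not obviously correct. The paper's route is a two-line event containment. After rotating so that $V=\Lambda=\mathrm{diag}(\lambda_1,\ldots,\lambda_k)$ with $\lambda_1\geq\cdots\geq\lambda_k$, one has
\[
\big\{(U_{0,\Lambda})_1^2\geq 1/k\big\}
=\Big\{\lambda_1^{-1}X_1^2\geq \tfrac{1}{k}\textstyle\sum_\ell\lambda_\ell^{-1}X_\ell^2\Big\}
\subset
\Big\{X_1^2\geq \tfrac{1}{k}\textstyle\sum_\ell X_\ell^2\Big\},
\]
the inclusion holding because $\lambda_\ell^{-1}\geq\lambda_1^{-1}$ implies $\sum_\ell\lambda_\ell^{-1}X_\ell^2\geq\lambda_1^{-1}\sum_\ell X_\ell^2$. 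Taking probabilities gives $D_0(V,P)\leq(1-\mathrm{pr}[\{0\}])\,\mathrm{pr}(U_1^2\geq 1/k)$, and the inclusion is strict on a set of positive measure unless all $\lambda_\ell$ coincide, i.e.\ $V=I_k$. No calculus, no density computations: just monotonicity of the eigenvalue weights. So the part you flagged as hard is actually elementary, while the part you waved through (identifying the minimizing $M$ at $I_k$) is where the real work lies.
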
 


In this result, ${\rm pr}[\{\theta_0\}]$ equals the probability that the generating variate~$R$ associated to~$P$ is equal to zero. Lemma~2 in \cite{PVB17b} implies that the maximal depth in Theorem~\ref{TheorFishconst} is monotone decreasing in~$k$ if~${\rm pr}[\{\theta_0\}]$ does not depend on~$k$, in which case the maximal depth is convergent as~$k$ goes to infinity. Since~$Y_k$ has the same distribution as~$Z_1^2/(\sum_{\ell=1}^k Z_\ell^2)$, where~$Z=(Z_1,\ldots,Z_k)^\T$ is $k$-variate standard normal, the limit is equal to~${\rm pr}(Z_1^2>1)\approx 0.317$.  
The proof of Theorem~\ref{TheorFishconst} requires the following result.

\begin{theorem}
\label{Theoraffineinvariance}
Let~$P=P^X$ be a probability measure over~$\R^k$ and fix~$\theta\in\R^k$.
Then, for any shape matrix~$V$, any invertible $k\times k$ matrix~$A$ and any $k$-vector~$b$, 
$$
D_{A\theta+b}\big(V_A,P^{A X+{b}}\big)
=
D_\theta(V,P^X)
, \quad 
R_{A\theta+b}(\alpha,P^{AX+b})
=
\big\{
V_A : V\in R_{\theta}(\alpha,P)
\big\}
,
$$
where~$V_A=k A V\! A^\T / {\rm tr}(A V\!A^\T)$ is the shape matrix proportional to~$A V\!A^\T$.
\end{theorem}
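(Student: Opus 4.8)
The plan is to trace how the multivariate sign $U_{\theta,V}$ transforms under the map $X \mapsto AX + b$, and then to transport this through the definition of $W_{\theta,V}$ and of $D_\theta$. First I would observe that if $Y = AX + b$, then for $Y \neq A\theta + b$ we have $Y - (A\theta + b) = A(X-\theta)$, so with the shape $V_A = kAVA^\T/{\rm tr}(AVA^\T)$ one computes $V_A^{-1/2}(Y - A\theta - b)$. Writing $c = {\rm tr}(AVA^\T)/k$, so that $V_A = c^{-1}\, (c^{1/2}\text{-free expression})$ — more precisely $V_A^{1/2}$ is some positive definite square root of $c^{-1}AVA^\T$ — the key algebraic fact is that there exists an orthogonal matrix $O_{A,V}$ (depending on $A$ and $V$ only, not on $X$) such that $V_A^{-1/2} A V^{1/2} = c^{-1/2} O_{A,V}$. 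This is the standard polar-type identity: both $V_A^{-1/2}AV^{1/2}$ and its transpose-inverse relation force $(V_A^{-1/2}AV^{1/2})(V_A^{-1/2}AV^{1/2})^\T = V_A^{-1/2}AVA^\T V_A^{-1/2} = c\, V_A^{-1/2} V_A V_A^{-1/2} \cdot$ wait — rather $AVA^\T = c\,V_A$, so $(V_A^{-1/2}AV^{1/2})(V_A^{-1/2}AV^{1/2})^\T = V_A^{-1/2}(cV_A)V_A^{-1/2} = cI_k$, which shows $c^{-1/2}V_A^{-1/2}AV^{1/2}$ is orthogonal; call it $O_{A,V}$.

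Given this, the multivariate sign transforms equivariantly: $U_{A\theta+b,V_A}(Y) = V_A^{-1/2}A(X-\theta)/\|V_A^{-1/2}A(X-\theta)\| = O_{A,V}\,V^{1/2}(X-\theta)/\|O_{A,V}V^{1/2}(X-\theta)\| = O_{A,V}\, U_{\theta,V}(X)$, using that $O_{A,V}$ is orthogonal and hence norm-preserving. Consequently $U_{A\theta+b,V_A}U_{A\theta+b,V_A}^\T - (1/k)I_k = O_{A,V}\big(U_{\theta,V}U_{\theta,V}^\T - (1/k)I_k\big)O_{A,V}^\T$, and after vectorizing, $W_{A\theta+b,V_A}(Y) = (O_{A,V}\otimes O_{A,V})\, W_{\theta,V}(X)$, where $O_{A,V}\otimes O_{A,V}$ is a $k^2\times k^2$ orthogonal matrix (using the identity ${\rm vec}(BCB^\T) = (B\otimes B){\rm vec}(C)$).

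Now I would finish by plugging this into the halfspace-depth formula from Definition \ref{shapedepthD}(i): since $P^{W_{A\theta+b,V_A}}$ under $P^{AX+b}$ equals the pushforward of $P^{W_{\theta,V}}$ under the orthogonal map $Q := O_{A,V}\otimes O_{A,V}$, and halfspace depth at the origin is invariant under orthogonal (indeed any invertible linear) transformations — because $\inf_{u\in\mathcal{S}^{k^2-1}}{\rm pr}(u^\T Q W \geq 0) = \inf_{v\in\mathcal{S}^{k^2-1}}{\rm pr}(v^\T W \geq 0)$ via the substitution $v = Q^\T u$, which ranges over the whole sphere since $Q$ is orthogonal — we get $D_{A\theta+b}(V_A, P^{AX+b}) = D_\theta(V, P^X)$. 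The region identity then follows immediately: $V \in R_\theta(\alpha,P)$ iff $D_\theta(V,P)\geq\alpha$ iff $D_{A\theta+b}(V_A,P^{AX+b})\geq\alpha$ iff $V_A\in R_{A\theta+b}(\alpha,P^{AX+b})$, and one checks the map $V\mapsto V_A$ is a bijection of $\mathcal{P}_{k,{\rm tr}}$ onto itself (its inverse being $W\mapsto (W)_{A^{-1}}$ up to the trace normalization). The main obstacle — really the only non-routine point — is establishing cleanly the orthogonality of $V_A^{-1/2}AV^{1/2}$ up to the scalar $c^{1/2}$; once that polar-decomposition identity is in hand, everything else is bookkeeping with Kronecker products and the orthogonal invariance of halfspace depth. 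I would also make sure to note the degenerate cases ($X = \theta$, equivalently $Y = A\theta+b$) are handled since the sign is defined to be $0$ there and $0$ is mapped to $0$ by $O_{A,V}$, so equivariance persists on the full support.
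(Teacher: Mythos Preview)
Your proof is correct and hinges on the same key identity the paper uses, namely that $c^{-1/2}V_A^{-1/2}AV^{1/2}$ is orthogonal (equivalently, the paper writes $V_A^{1/2}=k^{1/2}AV^{1/2}O/\{{\rm tr}(AVA^\T)\}^{1/2}$ for some orthogonal~$O$); the only difference is packaging, since you stay with Definition~\ref{shapedepthD} and push the orthogonal map through via the Kronecker product~$O\otimes O$, whereas the paper routes through the Lemma~\ref{LemrewritingD} reformulation and absorbs~$O$ via the substitution $M\mapsto OMO^\T$. One small typo: in your sign computation the middle expression should read $O_{A,V}V^{-1/2}(X-\theta)/\|O_{A,V}V^{-1/2}(X-\theta)\|$, not $V^{1/2}$, though your conclusion $U_{A\theta+b,V_A}=O_{A,V}U_{\theta,V}$ is correct.
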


This shows that the fixed-$\theta$ shape depth and the corresponding regions behave well under affine transformations, and in particular under changes of the measurement units. Affine invariance is a classical requirement in location depth (\citealp{ZuoSer2000A}). 

Tyler shape depth is a sign concept in the sense that it depends on the underlying random vector~$X$ only through its multivariate sign~$U_{\theta,V}$. In the elliptical case, it follows that, if the distribution does not charge the center of the distribution, this depth does not depend on the distribution of the underlying generating variate~$R$. More precisely, we have the following result. 

\begin{theorem}
\label{TheorDistribfreeness}
Let~$P$ be an elliptical probability measure over~$\R^k$ with location~$\theta_0$ and shape~$V_0$. Then, (i) for some~$h:\mathcal{P}_{k,{\rm tr}}\to [0,1]$ that does not depend on~$V$ or on~$P$,
\begin{equation}
	\label{theconj}
D_{\theta_0}(V,P)
=
(1-{\rm pr}[\{\theta_0\}])
\,
h\bigg\{ \frac{k (V_0^{-1/2} V V_0^{-1/2})}{{\rm tr}(V_0^{-1}V)}\bigg\}
;
\end{equation}
(ii) for~$k=2$, 
\begin{equation}
	\label{explicik2}
D_{\theta_0}(V,P)
=
(1-{\rm pr}[\{\theta_0\}])
\,
{\rm pr}\bigg(
Y_2
\geq
\frac{1}{2}
+
\frac{1}{2} 
\bigg[
1- \det \bigg\{ \frac{2 V_0^{-1} V}{{\rm tr}(V_0^{-1}V)}\bigg\}
\bigg]^{1/2} 
\,
\bigg)
,
\end{equation}
with~$Y_2$ is Beta distributed with parameters~$1/2$ and~$1/2$.
\end{theorem}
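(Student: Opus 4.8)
The plan is to exploit affine invariance (Theorem~\ref{Theoraffineinvariance}) to reduce to the spherical case, and then to carry out an explicit computation of the halfspace depth of the origin for the distribution of $W_{\theta_0,V}$ when $X$ is spherically symmetric. First, write $P=P^X$ with $X\stackrel{d}{=}\theta_0+R V_0^{1/2}U$, $U$ uniform on $\mathcal{S}^{k-1}$ and independent of $R$. Fix a shape matrix $V$ and apply Theorem~\ref{Theoraffineinvariance} with $A=V_0^{-1/2}$ and $b=-V_0^{-1/2}\theta_0$: this gives $D_{\theta_0}(V,P^X)=D_{0}(\tilde V,P^{\tilde X})$, where $\tilde X=V_0^{-1/2}(X-\theta_0)\stackrel{d}{=}RU$ is spherically symmetric and $\tilde V = k(V_0^{-1/2}VV_0^{-1/2})/{\rm tr}(V_0^{-1}V)$ is precisely the shape matrix appearing inside $h\{\cdot\}$ in~(\ref{theconj}). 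So it suffices to prove that for a spherically symmetric $P^{\tilde X}$ with center $0$ and for any shape matrix $\tilde V$, $D_0(\tilde V,P^{\tilde X})=(1-{\rm pr}[\{0\}])\,h(\tilde V)$ for some universal $h$.

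Next, separate the mass at the center: since $W_{0,\tilde V}$ (through $U_{0,\tilde V}$) is set to zero when $\tilde X=0$, which happens with probability ${\rm pr}[\{0\}]={\rm pr}(R=0)$, and since the inequality $u^\T W_{0,\tilde V}\geq 0$ holds trivially at such points, one has for every $u\in\mathcal{S}^{k^2-1}$ that ${\rm pr}(u^\T W_{0,\tilde V}\geq 0)={\rm pr}(R=0)+{\rm pr}(R>0)\,{\rm pr}(u^\T W_{0,\tilde V}\geq 0\mid R>0)$. Conditionally on $R>0$, the sign $U_{0,\tilde V}=\tilde V^{-1/2}U/\|\tilde V^{-1/2}U\|$ does not depend on the law of $R$, so the conditional probability — and hence its infimum over $u$ — is a function of $\tilde V$ alone. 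Using the equivalent matrix-indexed form of the depth (Lemma~1 in the Supplementary Material), $D_0(\tilde V,P^{\tilde X})=(1-{\rm pr}[\{0\}])\inf_{M}{\rm pr}\{U_{0,\tilde V}^\T M U_{0,\tilde V}\geq {\rm tr}(M)/k\mid R>0\}$, which defines $h(\tilde V)$ and establishes~(i); a normalization like $h(I_k)={\rm pr}(Y_k>1/k)$ follows from Theorem~\ref{TheorFishconst}.

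For~(ii), set $k=2$ and compute $h(\tilde V)$ explicitly. Writing $\tilde V^{-1/2}U/\|\tilde V^{-1/2}U\|$ with $U=(\cos\phi,\sin\phi)^\T$ uniform on the circle, the quadratic form $U_{0,\tilde V}^\T M U_{0,\tilde V}-\tfrac12{\rm tr}(M)$ is, for $M$ symmetric $2\times2$, a trigonometric polynomial in $2\psi$ where $\psi$ is the angle of $\tilde V^{-1/2}U$; the worst-case $M$ (the one realizing the infimum) can be taken traceless, reducing to $M=\mathrm{diag}(1,-1)$ up to rotation, and the event becomes $\{\cos 2\psi \le c\}$ for a threshold $c$ depending on the eigenvalue ratio of $\tilde V$. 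One then pushes the uniform law of $\phi$ through $\phi\mapsto\psi$ and through $\psi\mapsto\cos 2\psi$; the latter has a Beta$(1/2,1/2)$ (arcsine) distribution on $[-1,1]$, since $\cos^2\psi$ on a uniform angle is Beta$(1/2,1/2)$. Tracking the threshold $c$ as a function of $\det\{2V_0^{-1}V/{\rm tr}(V_0^{-1}V)\}$ — using that in dimension two the determinant of a shape matrix with trace $2$ encodes its single eigenvalue parameter — yields exactly~(\ref{explicik2}). The main obstacle is the last computation: correctly identifying which symmetric $M$ attains the infimum in the matrix form of the depth (arguing that it is a traceless rank-one–type direction, by a compactness and homogeneity argument on the unit sphere of symmetric matrices), and then carrying the change of variables cleanly so that the threshold comes out as the stated function of the determinant rather than of the eigenvalue ratio; the spherical-to-elliptical reduction and part~(i) are comparatively routine once affine invariance is invoked.
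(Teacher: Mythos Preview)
Your route for part~(i) is the same as the paper's (affine invariance to reduce to the spherical law $P^{RU}$, then factor out the atom at the origin), but one sentence is wrong and would derail the argument if taken literally: $W_{0,\tilde V}$ is \emph{not} set to zero when $\tilde X=0$; rather $U_{0,\tilde V}=0$ and hence $W_{0,\tilde V}=-\mathrm{vec}(I_k)/k$, so $u^\T W_{0,\tilde V}\ge 0$ holds at the origin only when $u^\T\mathrm{vec}(I_k)\le 0$. Your displayed decomposition therefore yields ${\rm pr}[\{0\}]+(1-{\rm pr}[\{0\}])\,h(\tilde V)$, not the claimed product. The paper sidesteps this by using the version of Lemma~\ref{LemrewritingD} that takes the infimum over the sets $C^M_{\theta,V}$ \emph{excluding} $\theta$, which is exactly the form you invoke in your next sentence; so your conclusion is right, but the justification should go through that lemma from the outset rather than through the incorrect ``trivially holds'' claim.

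For part~(ii) there is a more serious gap. You write the event in terms of the angle $\psi$ of $\tilde V^{-1/2}U$ and then assert that $\cos^2\psi$ is Beta$(1/2,1/2)$; but $\psi$ is \emph{not} uniform when $\tilde V\neq I_2$, so this is false as stated. The paper's device is to diagonalize $V^{-1/2}MV^{-1/2}$ (equivalently, to work with $N=\tilde V^{-1/2}M\tilde V^{-1/2}$) so that the event is read off in the \emph{original} rotation-invariant coordinates: they use a standard normal $X$ and obtain $\lambda_1 X_1^2+\lambda_2 X_2^2\ge 0$, whence $X_1^2/(X_1^2+X_2^2)\sim{\rm Beta}(1/2,1/2)$; the circular analogue would use $\cos^2\phi'$ for a uniform $\phi'$, not $\cos^2\psi$. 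Moreover, for traceless $M$ the event $U_{0,\tilde V}^\T M U_{0,\tilde V}\ge 0$ is $\cos\{2(\psi-\alpha)\}\ge 0$ with no $\tilde V$-dependent threshold; the dependence on $\tilde V$ appears only \emph{after} the infimum over the rotation $\alpha$ (equivalently over $M$). That optimization is the substantive computation: the paper parametrizes $V$ and $M$, writes the two eigenvalues of $V^{-1}M$ in closed form, reduces to extremizing $f(m_2)=-\lambda_2/\lambda_1$ over $m_2\in\R$, and handles this by a case split and an auxiliary inequality (Lemma~\ref{lemineq}) to arrive at the threshold $\tfrac12+\tfrac12(1-\det\tilde V)^{1/2}$. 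Your proposal names this as the main obstacle but does not indicate how to carry it out, and the angle formulation you set up would first need to be corrected before that optimization can even be posed properly.
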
 

The function~$h$ in this result does not depend on~$P$, so that depth, under ellipticity, depends on~$P$ through~$V_0$ and~${\rm pr}[\{\theta_0\}]$ only, with the dependence on~${\rm pr}[\{\theta_0\}]$ not affecting the induced ranking of shape matrices. 
%
It is easy to check that the explicit bivariate elliptical depth in~(\ref{explicik2}) is compatible with the general results obtained above. While it seems very challenging to obtain an explicit expression for the function~$h$ in~(\ref{theconj}), numerical experiments lead us to conjecture that, irrespective of the dimension~$k$, the mapping~$h$ is of the form~$h(M)=g(\det M)$ for some function~$g:\R^+\to[0,1]$.  

The results of this section extend to the unspecified-location shape depth~$D(V,P)=D_{\theta_P}(V,P)$ and to the corresponding regions~$R(\alpha,P)=\{ V\in \mathcal{P}_{k,{\rm tr}}: D(V,P)\geq \alpha\}$. Theorems~\ref{Theorcontinuity} to~\ref{TheorQuasic} hold for any fixed~$\theta$ and their unspecified-$\theta$ versions are simply obtained by substituting~$\theta_P$ for~$\theta$ throughout. In particular, the existence of an unspecified-location deepest shape matrix is guaranteed if~$P$ is smooth at~$\theta_P$, or, more generally, if~$R(t_{\theta_P,P},P)$ is non-empty. Under unspecified location, the shape~$V_P$ of~$P$ is then defined as the barycenter of the set of shape matrices maximizing~$D(\cdot,P)$.  In view of the affine equivariance of~$\theta_P$, i.e.,~$\theta_{P^{AX+B}}=A\theta_{P^X}+b$, the affine-invariance/equivariance properties
$$
D\big(V_A,P^{A X+{b}}\big)
=
D(V,P^X)
, \quad 
R(\alpha,P^{AX+b})
=
\big\{
V_A : V\in R(\alpha,P)
\big\}
$$
follow directly from Theorem~\ref{Theoraffineinvariance}, to which we refer for the definition of~$V_A$. Finally, Theorems~\ref{TheorFishconst} and~\ref{TheorDistribfreeness} also readily extend to the unspecified-location case, since~$\theta_P=\theta_0$ for any elliptical probability measure~$P$ with location~$\theta_0$. In particular, if~$P$ is elliptical with shape~$V_0$, then the unspecified-$\theta$ shape depth~$D(V,P)$ is uniquely maximized at~$V=V_0$, if the distribution is not degenerate at a single point.

\section{Consistency}
\vspace{0mm}
\label{SecConsist}

When $k$-variate observations~$X_1,\ldots,X_n$ are available, we define the sample fixed-$\theta$ depth of a shape matrix~$V$ as~$D_\theta(V,P_n)$, where~$P_n$ is the empirical probability measure associated with~$X_1,\ldots,X_n$, and its unspecified-location version as~$D(V,P_n)$. In this section, we state a Glivenko--Cantelli-type result for these sample depths and investigate consistency of max-depth shape estimators.

\begin{theorem}
\label{Theorunifconst}
Let~$P$ be a probability measure over~$\R^k$ and let~$P_n$ denote the empirical probability measure associated with a random sample of size~$n$ from~$P$. Then, (i) for any~$\theta\in\R^k$, $\sup_{V\in\mathcal{P}_{k,{\rm tr}}}
|D_\theta(V,P_n)-D_\theta(V,P)| \to 0$ almost surely as~$n\to\infty$; (ii) if~$P$ is absolutely continuous with respect to the Lebesgue measure, then  $\sup_{V\in\mathcal{P}_{k,{\rm tr}}} |D(V,P_n)-D(V,P)| \to 0$ almost surely as~$n\to\infty$. 
\end{theorem}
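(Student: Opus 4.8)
The plan is to establish the Glivenko--Cantelli-type result by exploiting the structure of the shape depth as an infimum of halfspace-probabilities over a sphere, combined with the fact that the relevant class of halfspaces forms a Vapnik--Chervonenkis class. First I would treat part~(i), the fixed-$\theta$ case. The key observation is that $D_\theta(V,P) = \inf_{u\in\mathcal{S}^{k^2-1}} {\rm pr}(u^\T W_{\theta,V}\geq 0)$, and as $(u,V)$ ranges over $\mathcal{S}^{k^2-1}\times\mathcal{P}_{k,{\rm tr}}$, the sets $\{x\in\R^k : u^\T W_{\theta,V}(x)\geq 0\}$ (writing $W_{\theta,V}(x)$ for the value of the statistic at the data point $x$) all lie inside a fixed VC-class of subsets of~$\R^k$. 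Indeed $u^\T W_{\theta,V}(x)$ is, for each fixed $(u,V)$, a quadratic form in the sign vector $U_{\theta,V}(x)$, and hence $\{x: u^\T W_{\theta,V}(x)\geq 0\}$ is the preimage under the fixed map $x\mapsto (x-\theta)$ of a cone determined by a symmetric bilinear form; the family of all such cones, indexed by $(u,V)$, is contained in the VC-class of sets of the form $\{z: z^\T M z\geq c\,z^\T z\}$ with $M$ symmetric, which is VC because it is carved out of a finite-dimensional vector space of functions. By the classical Vapnik--Chervonenkis uniform law of large numbers, $\sup$ over this class of $|P_n(\cdot)-P(\cdot)|\to 0$ almost surely, and taking infima over $u$ for each fixed $V$ and then a supremum over $V$ preserves the bound via the elementary inequality $|\inf_u a(u)-\inf_u b(u)|\leq \sup_u |a(u)-b(u)|$. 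This yields part~(i).

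For part~(ii) the complication is that $D(V,P_n)=D_{\theta_{P_n}}(V,P_n)$ involves the random center $\theta_{P_n}$, the Tukey median of the empirical measure, rather than a fixed $\theta$. The plan here is to combine part~(i), upgraded to hold uniformly in $\theta$ over compact sets, with the strong consistency of the Tukey median $\theta_{P_n}\to\theta_P$ almost surely (which holds when $P$ is absolutely continuous, so that the Tukey median is unique and the halfspace depth is continuous), together with a continuity property in $\theta$ of the map $\theta\mapsto D_\theta(V,P)$ that is uniform in $V$. Concretely I would write
\begin{align*}
|D(V,P_n)-D(V,P)|
&\leq
|D_{\theta_{P_n}}(V,P_n)-D_{\theta_{P_n}}(V,P)|
+
|D_{\theta_{P_n}}(V,P)-D_{\theta_P}(V,P)|,
\end{align*}
bound the first term by $\sup_{\theta\in K}\sup_V |D_\theta(V,P_n)-D_\theta(V,P)|$ on the almost-sure event that $\theta_{P_n}$ eventually lies in a fixed compact neighbourhood $K$ of $\theta_P$, and bound the second term using uniform (in $V$) continuity of $\theta\mapsto D_\theta(V,P)$ at $\theta_P$ when $P$ is absolutely continuous. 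The uniform-in-$\theta$ version of part~(i) follows from the same VC argument, since enlarging the index to $(u,V,\theta)$ with $\theta$ in a compact set still keeps the relevant sets inside a fixed finite-dimensional VC-class (one simply allows the defining quadratic form and the shift to vary, which adds finitely many parameters).

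I expect the main obstacle to be the uniform-in-$V$ continuity of $\theta\mapsto D_\theta(V,P)$ near $\theta_P$ for absolutely continuous~$P$. Pointwise in $V$ this should follow from an argument like the one behind Theorem~\ref{Theorcontinuity}(iii) --- absolute continuity forces $t_{\theta,P}=0$ and makes the boundary hyperplanes $\{z:u^\T z=0\}$ negligible, so small perturbations of $\theta$ move the probabilities ${\rm pr}(u^\T W_{\theta,V}\geq 0)$ only a little --- but making this uniform over all $V\in\mathcal{P}_{k,{\rm tr}}$ requires care, because $\mathcal{P}_{k,{\rm tr}}$ is not compact: the multivariate sign $U_{\theta,V}$ degenerates as the eigenvalues of $V$ approach the boundary. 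The way around this is again the VC/equicontinuity route: since the class of sets $\{z: (z-\theta)^\T M (z-\theta)\geq c\,\|z-\theta\|^2\}$ over all symmetric $M$, all $c$, and all $\theta\in K$ is a single VC-class, and since $P$ assigns measure zero to every hyperplane, a Glivenko--Cantelli/equicontinuity argument for $P$ itself over this class gives the required modulus of continuity uniformly. Assembling these pieces, letting first $n\to\infty$ (using part~(i) uniform in $\theta$ and $\theta_{P_n}\to\theta_P$) and then the continuity estimate, yields part~(ii).
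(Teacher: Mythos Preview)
Your argument for part~(i) is correct and essentially identical to the paper's: both observe that the sets in play are quadratic cones $\{x:(x-\theta)^\T A(x-\theta)\geq 0\}$, which form a VC-class because they are cut out by a finite-dimensional space of functions, and then pass from the VC uniform law of large numbers to uniform convergence of the depths via $|\inf_u a(u)-\inf_u b(u)|\leq\sup_u|a(u)-b(u)|$.

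For part~(ii) the paper gives no proof, referring instead to \cite{PVB17}, so a detailed comparison is impossible; your decomposition through the random center~$\theta_{P_n}$, together with a uniform-in-$\theta$ upgrade of part~(i) and a uniform-in-$V$ continuity of $\theta\mapsto D_\theta(V,P)$, is the natural strategy and almost certainly what that reference does. One point of caution: your proposed justification of the equicontinuity step via ``a Glivenko--Cantelli/equicontinuity argument for~$P$ itself'' is not quite the right tool, since VC theory controls $\sup_C|P_n(C)-P(C)|$, not the modulus of continuity of $C\mapsto P(C)$ over the class. What actually delivers that modulus is a compactness argument: normalize to $\|A\|_F=1$ and $\theta\in K$, note that for absolutely continuous~$P$ the boundary $\{x:(x-\theta)^\T A(x-\theta)=0\}$ has $P$-measure zero, and use dominated convergence to get continuity of $(\theta,A)\mapsto P(D_{\theta,A})$, hence uniform continuity on the compact parameter set. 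This is a refinement of phrasing rather than a gap in the strategy.
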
 

We illustrate 
this result in the bivariate elliptical case associated with Theorem~\ref{TheorDistribfreeness}(ii). Figure~\ref{Fig1} provides contour plots of~$D_\theta(V,P)$ in terms of~$V_{12}/(V_{11}V_{22})^{1/2}$ and~$V_{22}/V_{11}$, 
for various bivariate, arbitrarily elliptical, probability measures. The sign nature of shape depth ensures that these contours, along with their empirical counterparts, are distribution-free in the class of elliptical distributions that do not charge the centre of symmetry.
Figure~\ref{Fig1} also reports the empirical contour plots obtained from a random sample of size~$n=800$ drawn from the corresponding bivariate normal distributions. Clearly, the results support the consistency in Theorem~\ref{Theorunifconst}(i).

In $\S$~3, the shape~$V_{\theta,P}$ of~$P$ was defined as the barycenter of the collection of $P$-deepest shape matrices. In the empirical case, a natural estimator is the corresponding shape matrix~$V_{\theta,P_n}$ computed from the empirical probability measure~$P_n$ associated with the sample at hand;
existence here follows from the fact that~$D_\theta(V,P_n)$ may only take values~$\ell/n$ ($\ell=0,1,\ldots,n$). The same argument ensures the existence of the sample deepest shape~$V_{P_n}$ in the unspecified-location case. The sample Tukey median~$\theta_{P_n}$ was one of the first affine-equivariant location estimators with a high breakdown point. It would therefore be interesting to investigate whether the affine-equivariant shape estimator~$V_{P_n}$, parallel to the Maronna--Stahel--Yohai P-estimators of scatter, also has a high breakdown point (\citealp{Tyl1994}). Since this is beyond the scope of this paper, we focus on consistency of sample deepest shapes.  

\begin{theorem}
\label{Theormaxdepthconst}
Let~$P$ be a probability measure over~$\R^k$ and let~$P_n$ denote the empirical probability measure associated with a random sample of size~$n$ from~$P$. (i) Fix~$\theta\in\R^k$ and assume that~$R_{\theta}(t_{\theta,P},P)$ is non-empty. Then, $V_{\theta,P_n}\to V_{\theta,P}$ almost surely as~$n\to\infty$. 
(ii) If~$P$ is absolutely continuous with respect to the Lebesgue measure, then~$V_{P_n}\to V_{P}$ almost surely as~$n\to\infty$. 
\end{theorem}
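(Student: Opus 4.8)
The plan is to deduce both statements from the Glivenko--Cantelli convergence of Theorem~\ref{Theorunifconst} together with the structural results of Section~3 (upper/lower semicontinuity, compactness of the deep regions, convexity), via the standard ``uniform convergence of the objective $\Rightarrow$ convergence of the argmax'' scheme, adapted to the fact that the estimator is the \emph{barycenter} of the deepest region rather than an arbitrary maximizer. I would carry out part~(i) in detail; part~(ii) then follows verbatim, replacing $D_\theta(\cdot,P_n),D_\theta(\cdot,P),R_\theta(\cdot,P)$ by their unspecified-location counterparts $D(\cdot,P_n),D(\cdot,P),R(\cdot,P)$, since Theorem~\ref{Theorunifconst}(ii) supplies the corresponding uniform convergence (which already incorporates the consistency of the Tukey median $\theta_{P_n}\to\theta_P$) and, as noted at the end of Section~3, Theorems~\ref{Theorcontinuity}--\ref{TheorQuasic} all have unspecified-location versions. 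So fix $\theta$ with $R_\theta(t_{\theta,P},P)\neq\emptyset$, put $\alpha_*=\max_V D_\theta(V,P)$ (attained by Theorem~\ref{Theorexistmax}) and $\alpha_n=\max_V D_\theta(V,P_n)$ (attained, as $D_\theta(\cdot,P_n)$ takes only the values $0,1/n,\dots,1$), and set $\eta_n=\sup_{V\in\mathcal{P}_{k,{\rm tr}}}|D_\theta(V,P_n)-D_\theta(V,P)|$. By Theorem~\ref{Theorunifconst}(i), $\eta_n\to0$ almost surely, so $|\alpha_n-\alpha_*|\le\eta_n\to0$ almost surely; we henceforth argue on the event $\{\eta_n\to0\}$.

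The first step is to trap all deepest shapes in one compact set. Being the barycenter of the convex region $R_\theta(\alpha_n,P_n)$, the estimator $V_{\theta,P_n}$ lies in that region, so $D_\theta(V_{\theta,P_n},P_n)=\alpha_n$ and hence $D_\theta(V_{\theta,P_n},P)\ge\alpha_n-\eta_n$; the same bound holds for every $V\in R_\theta(\alpha_n,P_n)$. Assuming $\alpha_*>t_{\theta,P}$ (the borderline case $\alpha_*=t_{\theta,P}$ being handled separately), fix $\beta\in(t_{\theta,P},\alpha_*)$; then $\alpha_n-\eta_n>\beta$ for $n$ large, so $R_\theta(\alpha_n,P_n)\subseteq R_\theta(\beta,P)=:\mathcal{C}$ eventually, with $\mathcal{C}$ compact by Theorem~\ref{Theorboundedness}. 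Next I would establish the one-sided convergence $\sup_{V\in R_\theta(\alpha_n,P_n)}d(V,R^*)\to0$, where $R^*:=R_\theta(\alpha_*,P)$: if it failed there would be $\varepsilon>0$, a subsequence, and $V_{n_j}\in R_\theta(\alpha_{n_j},P_{n_j})$ with $d(V_{n_j},R^*)\ge\varepsilon$; by compactness $V_{n_j}\to V_\infty\in\mathcal{C}$ along a further subsequence, and $D_\theta(V_{n_j},P)\ge\alpha_{n_j}-\eta_{n_j}\to\alpha_*$ together with upper semicontinuity (Theorem~\ref{Theorcontinuity}(i)) forces $D_\theta(V_\infty,P)\ge\alpha_*$, i.e.\ $V_\infty\in R^*$, contradicting $d(V_\infty,R^*)\ge\varepsilon$. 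In particular $d(V_{\theta,P_n},R^*)\to0$, so when $R^*$ is a single point --- which by Theorem~\ref{TheorFishconst} holds for every non-degenerate elliptical $P$, and is what one expects for ``generic'' $P$ --- we immediately get $V_{\theta,P_n}\to V_{\theta,P}$ and are done.

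The remaining case, where $R^*$ is not a singleton, is the main obstacle: one must strengthen the one-sided statement above to actual convergence of the \emph{barycenters}. This does not follow from Hausdorff convergence of the deepest regions alone, because $R_\theta(\alpha_n,P_n)$ is a level set of $D_\theta(\cdot,P_n)$ at its \emph{exact} maximum and, a priori, could be a thin sliver concentrated near one side of $R^*$, whose barycenter need not converge to that of $R^*$. The target is therefore to show that $R_\theta(\alpha_n,P_n)$ asymptotically fills out $R^*$ --- concretely, that the normalized indicators $\mathbf{1}_{R_\theta(\alpha_n,P_n)}/\mathrm{vol}(R_\theta(\alpha_n,P_n))$ converge in $L^1$, with respect to Lebesgue measure on the affine hull of $R^*$, to $\mathbf{1}_{R^*}/\mathrm{vol}(R^*)$ --- after which (\ref{barydef}) and dominated convergence give $\mathrm{vec}\,V_{\theta,P_n}\to\mathrm{vec}\,V_{\theta,P}$. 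To prove this inner approximation I would combine the convexity of the deepest regions (Theorem~\ref{TheorQuasic}) with, in the absolutely continuous setting of part~(ii), the lower semicontinuity of $D_\theta(\cdot,P)$ (Theorem~\ref{Theorcontinuity}(iii)), so as to rule out that $D_\theta(\cdot,P_n)$ stays strictly below $\alpha_n$ throughout a neighbourhood of a compact subset of the relative interior of $R^*$, using $\eta_n\to0$ and $\alpha_n\to\alpha_*$. I expect this reconciliation between the discrete, exact-level-set nature of the sample deepest region and the (possibly lower-dimensional) population deepest region to be the delicate heart of the argument; everything preceding it is the routine argmax-localization machinery.
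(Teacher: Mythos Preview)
Your route and the paper's diverge substantially. The paper does not attempt to show that the sample deepest region fills out the population one. Instead, it observes that $D_\theta(\cdot,P)$ is upper semicontinuous and constant on $R_\theta(\alpha_*,P)$, and then constructs an auxiliary upper semicontinuous function $\tilde D_\theta(\cdot,P)$ that coincides with $D_\theta(\cdot,P)$ off $R_\theta(\alpha_*,P)$ but has the barycenter $V_{\theta,P}$ as its \emph{unique} maximizer; it then invokes Theorem~\ref{Theorunifconst} together with standard argmax-consistency results (Theorem~2.12 and Lemma~14.3 of Kosorok, \emph{Introduction to Empirical Processes and Semiparametric Inference}, 2008) to conclude directly that $d(V_{\theta,P_n},V_{\theta,P})\to 0$. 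The non-singleton difficulty is thus dissolved at the outset by a tie-breaking modification of the population objective, rather than by any analysis of how $R_\theta(\alpha_n,P_n)$ sits inside $R^*$. Part~(ii) is dispatched by the paper in one line as ``entirely similar'', matching your plan there.

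This matters exactly where you locate the ``delicate heart''. Your compact localization and the singleton case are fine and standard. But your proposed resolution for general $R^*$---that the sample deepest region must asymptotically fill out $R^*$ so that the normalized indicators converge in $L^1$---cannot be extracted from the ingredients you list. Uniform closeness of $D_\theta(\cdot,P_n)$ to the constant value $\alpha_*$ on $R^*$ says nothing about \emph{where within $R^*$} the sample function attains its exact maximum; convexity of the sample level sets and continuity of the population depth (which is flat on $R^*$, so lower semicontinuity gives no information there) provide no purchase on this. Nothing you have assembled prevents $R_\theta(\alpha_n,P_n)$ from collapsing, along a subsequence, to a low-dimensional face of $R^*$ whose barycenter lies far from $V_{\theta,P}$. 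So the step you correctly flag as the crux is a genuine gap in your plan; the paper's tie-breaking device is precisely the mechanism that circumvents it.
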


The specified-$\theta$ result in Theorem~\ref{Theormaxdepthconst}(i) holds in particular 
if~$P$ is smooth at~$\theta$. The unspecified-$\theta$ result requires a more stringent smoothness assumption, namely absolute continuity of~$P$. This assumption, which is already present in Theorem~\ref{Theorunifconst}(ii), is only needed to control the impact of replacing~$\theta$ by~$\theta_{P_n}$ in~$D_\theta(V,P_n)$ and~$V_{\theta,P_n}$. Figure~\ref{Fig1} also supports Theorem~\ref{Theormaxdepthconst}(i) since, in each sample considered, the sample deepest shape is close to its population counterpart.
%
%
%

\section{Two applications}
\vspace{0mm}
\label{Secapplis}

\subsection{Choosing a shape matrix estimator in principal component analysis}
\vspace{0mm}
\label{SecAppliPCA}

There is a vast literature on scatter or shape estimation. Among the most famous estimators are 
the minimum covariance determinant scatters~$S_\gamma$. 
Recall that, in the empirical case, $S_\gamma$ is the covariance matrix with the smallest determinant among covariance matrices computed using only a proportion $\gamma$ of the observations. The choice of the trimming proportion $1-\gamma$ is crucial, as the loss in efficiency can be very large if the trimming is excessive; see, for example, \cite{CroHae1999} or \cite{PaiVanB2014}. Choosing $\gamma$ is therefore difficult, as it should be taken large, but not so large as to incorporate outliers. In this section, we consider robust principal component analysis based on the shape estimators~$\hat{V}_\gamma=kS_\gamma/{\rm tr}(S_\gamma)$ and show that Tyler shape depth allows the making of an informed choice of~$\gamma$.

For several contamination proportions~$\eta$, we independently generated~$R=500$ bivariate samples of~$n=800$ independent observations, each comprising~$(1-\eta)n$ clean observations and~$\eta n$ outliers. With~$X$ bivariate normal with zero mean and covariance matrix~${\rm diag}(4,1)$
 and~$Y$ bivariate normal with mean~$(0,\delta)^\T$ and identity covariance matrix, the clean observations are equal to~$X$ in distribution, whereas the outliers are distributed, in equal proportions, as~$Y$ or~$-Y$. Two simulations were conducted, one for~$\delta=4$ and one for~$\delta=5$; clearly, the former simulation provides a harder robustness problem than the latter. We consider estimating the first principal direction~$e_1=(1,0)^\T$ of the uncontaminated distribution. For any~$\gamma\in[0.5,1]$, a natural estimator is, up to a sign, the first eigenvector~$\hat{v}_{\gamma}$ of~$\hat{V}_\gamma$. Denoting as~$\hat{v}_{r,\gamma}$ this estimate in replication~$r=1,\ldots,R$, estimation performance can be measured through the mean squared error 
$$
\textrm{MSE}_\gamma
=
\frac{1}{R}
\sum_{r=1}^R
\,
(\Delta\alpha_{r,\gamma})^2
,
$$
where~$\Delta\alpha_{r,\gamma}=\arccos ( |e_1^\T \hat{v}_{r,\gamma}| )$ is the angle between the population first eigendirection~$e_1$ and its estimate~$\hat{v}_{r,\gamma}$. Figure~\ref{Fig2} plots~$\textrm{MSE}_\gamma$ as a function of~$\gamma$; the Monte Carlo exercise was performed for every value of~$\gamma\in\{0.5,0.51,\ldots,0.99,1\}$. The results confirm that, for any contamination proportion~$\eta$, a suitable value of~$\gamma$ should be identified. The optimal value~$\gamma_0=\arg\min_\gamma \textrm{MSE}_\gamma$ basically coincides with~$1-\eta$ in the easy case~$\delta=5$, whereas, in the harder one~$\delta=4$, $\gamma_0$ is slightly smaller than~$1-\eta$ for large contaminations. This is no surprise: when outliers are hard to identify, the estimators~$\hat{V}_\gamma$, with~$\gamma\approx 1-\eta$, are likely to be based on some outliers, which will strongly affect the estimation performance.  

In this framework, Tyler shape depth, as announced, may be very useful to select a suitable value of~$\gamma$. We suggest choosing~$\gamma$ based on visual inspection of the curve~$\mathcal{C}=\{(\gamma, D(\hat{V}_\gamma,P_{\gamma,n})):\gamma\in[0.5,1]\}$, where $P_{\gamma,n}$ denotes the empirical measure associated with the optimal subsample leading to~$\hat{V}_\gamma$. The rationale is the following: for~$\gamma$ small, $D(\hat{V}_\gamma,P_{\gamma,n})$ will remain relatively high as long as no outlier is added to the optimal subsample. As~$\gamma$ increases and outliers are added in the computation of~$\hat{V}_\gamma$, the depth~$D(\hat{V}_\gamma,P_{\gamma,n})$ will sharply decrease, thereby forming a kink in~$\mathcal{C}$. The selected~$\gamma$ for a given dataset, $\hat\gamma$, should therefore be the largest value for which~$\mathcal{C}$ exhibit a stable behaviour. Figure~\ref{Fig2} plots the curve~$\mathcal{C}$ for the values of~$\delta$ and~$\eta$ considered above and clearly illustrates the behaviour of the depth curves just described. When the outliers are easily identifiable, the kinks occur at~$\gamma_0$, which coincides with~$1-\eta$. In the harder case, where outliers and clean data tend to be mixed, the selected value~$\hat\gamma$ is still remarkably close to~$\gamma_0$. 
In conclusion, Tyler shape depth, and the ranking of shape matrices it provides, yield an effective visual tool that allows the selection of a sensible trimming proportion~$1-\gamma$ in a data-driven way when conducting, e.g., a principal component analysis.

\subsection{Outlier detection}
\vspace{0mm}
\label{SecAppliOutlier}

For each trading day between February 1st, 2015 and February 1st, 2017, we collected the Nasdaq Composite and S$\&$P500 stock indices every five minutes and computed their returns, that is, the differences between two logs of consecutive index values. The returns on a given day form a bivariate dataset of usually $78$ observations, though the number of observations varies due to missing values; days with fewer than $70$ bivariate returns were discarded. The resulting dataset comprises $n=38489$ observations on $D=478$ trading days.

Our analysis studies the joint behaviour of the bivariate returns in order to determine which trading days are atypical. An important source of atypicality is associated with the overall scale of the bivariate returns, which alternate between periods of high and low volatility. Such deviations can easily be detected by comparing the trace of any scatter measure on intraday data with that on the whole dataset, so we focus instead on detecting atypical joint volatility, i.e., days on which the ratios of the marginal volatilities or the correlations between the returns deviate greatly from their  global behaviour.  

Let $\hat V_{\rm full}=\hat{V}_{\hat\gamma}$ denote the minimum covariance determinant shape estimator  computed from the full collection of~$n$ returns with maximal shape depth. More precisely, denoting as~$P_{\rm full}$ the empirical distribution of the full collection of returns, let $\hat\gamma=\arg \max_{\gamma\in\Gamma} D(\hat V_{\gamma},P_{\rm full})$, for $\Gamma=\{0.5,0.505,0.51,\dots, 0.995,1\}$. The value obtained is $\hat\gamma=0.825$, with corresponding depth $D(\hat{V}_{\hat\gamma},P_{\rm full})=0.497$. This high depth value ensures that~$\hat V_{\rm full}$ is an excellent proxy for the deepest shape matrix~$\hat{V}=\arg\max_V D(V,P_{\rm full})$, so the computation of~$\hat{V}$ is unnecessary. Returns at the beginning of each trading period are known to be more volatile and should be discarded in shape estimation, so the robustness of~$\hat V_{\rm full}$ is an obvious asset: the value of~$\hat\gamma$ allows us to adaptively discard days on which the volatility deviates from its global pattern. The procedure discarded more than half of the corresponding intra-day returns for~17 days, and, remarkably, $13$ of these days lie within the two atypical periods mentioned in the next paragraph.


For each day~$d=1,\dots,D$, we evaluated the depth~$D(\hat V_{\rm full},P_d)$ of the global shape estimate with respect to the empirical distribution~$P_d$ of the bivariate returns on day~$d$. 
The left panel of Figure~\ref{Fig3} presents the depth values~$D(\hat V_{\rm full},P_d)$. Vertical lines mark major events affecting the shape of the  volatility, while the two greyed rectangles cover two periods during which the markets notoriously gave atypical returns: the first period follows the devaluation of the Yuan on August 11th, 2015 which saw rapid changes in the stock markets, including large devaluations on August 24th, event~(a). The second period covers the beginning of 2016, when a slump in oil prices made stocks relying on oil very volatile compared to others. This resulted in atypical shape behaviour during January 22 -- February 9; this last day, event~(b), had the sharpest loss for the S$\&$P500 index. The other events are (c) the decision of the European Central Bank on March 10th, 2016 to extend quantitative easing thereby slashing interest rates, which had a significant positive impact on both the Nasdaq and S$\&$P500, but more pronounced for the latter, (d) the positive impact on the financial stocks following Fed officials' comments on the possibility of rate hike made on May 27, 2016, and (e) the aftermath of Donald Trump's election on November 9th. Detection of atypical observations was achieved by flagging outliers with a depth so low that it is outside the box-and-whiskers plot. This resulted in 12 flagged days, each either  being one of the events described above or lying in one of the greyed regions. 



We also computed the halfspace shape depth ${\rm HD}
(\hat V_{\rm full},P_d)$ of the global estimate  for each day~$d$ (\citealp{PVB17}). The right panel of Figure~\ref{Fig3}, a plot of $D(\hat V_{\rm full},P_{d})$ versus ${\rm HD}(\hat V_{\rm full},P_d)$, shows a clear positive association.
Halfspace shape depth values seem to have a higher concentration than Tyler's, because the former maximizes a concept of scatter depth in scale and may be able to find scatter estimates better suited to the data.
Indeed, a decrease in volatility in one of the marginals might be balanced by considering a scatter with a smaller scale which would have a large depth value. 
A byproduct of this is the fact that, when evaluating halfspace shape depth, the difficult maximisation step in scale seems to be crucial in correctly computing the depth ranking of the data, which can be affected by small deviations. 
More importantly, while events (a) and (b) receive low depth with respect to both concepts, only Tyler shape depth succeeds in flagging days associated with events (c) to (e) 
as outlying.

\section{Hypothesis testing for shape}
\vspace{0mm}
\label{Sectest}

In the previous section, we presented two specific applications of shape depth. The concept also allows us to tackle more standard inference problems for shape, such as point estimation and hypothesis testing. Here, we consider testing~$\mathcal{H}_{0}:V=V_0$ against $\mathcal{H}_{1}:V\neq V_0$ at level~$\alpha\in(0,1)$, where $V_0\in\mathcal{P}_{k,{\rm tr}}$ is fixed, based on a random sample~$X_1,\ldots,X_n$ from a $k$-variate elliptical distribution with known location~$\theta$ and unknown shape~$V$. 
In view of Theorem~\ref{TheorFishconst}, a natural depth-based test, $\phi_D$ say, rejects the null for small values of~$T_{\theta,n}=D_\theta(V_0,P_n)$, where $P_n$ is the empirical distribution of~$X_1,\ldots,X_n$. Since~$T_{\theta,n}$ is discrete, achieving null size~$\alpha$ in general requires randomization. The resulting test thus rejects the null hypothesis if
$
T_{\theta,n}<t_{\alpha,n}
$,
rejects the null hypothesis with probability~$\gamma_{\alpha,n}$ if 
$
T_{\theta,n}=t_{\alpha,n}
$,
and does not reject the null hypothesis if 
$
T_{\theta,n}>t_{\alpha,n}
$, 
where~$t_{\alpha,n}$ is the null $\alpha$-quantile of~$T_{\theta,n}$ and $\gamma_{\alpha,n}$ is the amount of randomization. Under the assumption that~$P$ does not charge the center of the distribution, $T_{\theta,n}$ is distribution-free under the null hypothesis, which allows estimating~$t_{\alpha,n}$ and~$\gamma_{\alpha,n}$ arbitrarily well through simulations. Prior to applying the test below for~$k=2$ at level~$5\%$ with sample sizes~$n=200$, $500$, these were estimated from~$500,\!000$ mutually independent standard normal samples for each sample size, yielding~$\hat{t}_{0.05,200}=0.40$, $\hat{\gamma}_{0.05,200}=0.61$, $\hat{t}_{0.05,500}=0.43$ and~$\hat{\gamma}_{0.05,500}=0.25$. Distribution-freeness of~$T_{\theta,n}$ under the null hypothesis actually extends to the class of distributions with elliptical directions (\citealp{Ran2000}). 

We performed two simulations in the bivariate case. The first considers the problem of testing the null hypothesis of sphericity~$\mathcal{H}_0: V_0=I_2$ about~$\theta=0$ and compares the finite-sample powers of~$\phi_D$ with those of some competitors. For each value of~$\ell=0,1,\ldots,6$ we generated $M=3,\!000$ independent random samples~$X_1,\ldots,X_n$ of size~$n=500$ from the normal distribution with location~$\theta=0$ and shape
$$
V_{\ell,\xi}
=
I_2+\ell\xi
{1\ 0.5\, \choose \,0.5\ -1\,}
$$
and from the corresponding elliptical Cauchy distribution. The value~$\ell=0$ corresponds to the null hypothesis, whereas $\ell=1,\ldots,6$ provide increasingly severe alternatives. We took~$\xi=0.035$ and $0.045$ for the normal and Cauchy samples in order to obtain roughly the same rejection frequencies in both cases. 

For each sample, we carried out six tests at nominal level~$5\%$: 
(i) the test~$\phi_D$ described above; 
(ii) the Gaussian test from \cite{Joh1972}, or more precisely, its extension to elliptical distributions with finite fourth-order moments from \cite{HalPai2006B}; 
(iii) the sign test from \cite{HalPai2006B}; 
(iv) the Wald test based on the \cite{Tyl1987} scatter matrix;
(v)--(vi) the tests from~\cite{PaiVanB2014} based on the 
shape estimator~$\hat{V}_\gamma$ in $\S$~\ref{Secapplis}, with~$\gamma=0.5$ and~$\gamma=0.8$.
The tests (ii)--(vi) were performed based on their asymptotic null distribution. 
The rejection frequencies in Figure~\ref{Fig4} reveal that~$\phi_D$ performs very similarly to, although it may be slightly dominated by, the sign-based tests in~(iii)--(iv) but performs very well under heavy tails, where it beats all other tests. As expected, the Gaussian test collapses under heavy tails and the minimum covariance determinant tests show low empirical power.

  The second simulation tests~$\mathcal{H}_{0}:V=V_0$, with $V_0={\rm diag}(2,1/2)$ and specified location~$\theta=0$, and compares the tests above in terms of the level robustness (\citealp{Heetal1990}). We considered mixture distributions~$P^{X_{(\eta)}}=(1-\eta)P^{X}+\eta P^{Y}$ with several contamination levels~$\eta$. Here, $X$ is a bivariate, normal or elliptical Cauchy, null random vector. The contamination random vector~$Y$ was chosen as follows: 
(a) 
$Y$ has the same distribution as the vector obtained by rotating~$X$ about the origin by~$45$ degrees;
(b) 
$Y$ has the same elliptical distribution as~$X$ but its shape is $V=I_2$;
(c) 
$Y$ is obtained by multiplying the vector~$Y$ in~(b) by four. 
The uncontaminated distribution~$P^X$ puts more mass along the horizontal axis. In~(a), the contamination typically shows along the main bisector, whereas the contamination in~(b) is uniformly distributed over the unit circle. As for~(c), the contamination combines the directional feature of~(b) with radial outlyingness. 
For each combination of distribution, normal or Cauchy, of contamination pattern, (a)--(c), and of contamination level, $\eta=0,0.025,0.05,0.1,0.2,0.25$ or $0.3$, we generated $3,\!000$ independent random samples~$X_{(\eta)1},\ldots,X_{(\eta)n}$ of size~$n=200$. Figure~\ref{Fig5} plots the resulting rejection frequencies and reveals the very good robustness of the depth-based test~$\phi_D$; recall that, irrespective of~$\eta$, the target rejection frequency is here~$5\%$. In particular, $\phi_D$ always dominates its sign-based competitors~(iii)--(iv). The minimum covariance determinant tests~(v)--(vi) dominate~$\phi_D$ in terms of robustness but  exhibit poor finite-sample power. Radial outliers strongly affect the Gaussian test.

Summing up, the test associated with the proposed shape depth provides a good balance between efficiency and robustness. The improved robustness compared to its sign-based competitors is obtained at a very slight loss of power. Depth-based procedures can thus be defined for standard inference problems on shape, and will tend to perform as well as sign-based procedures. As shown in $\S$~\ref{Secapplis}, however, shape depth provides a whole ranking of shape matrices that allows addressing less standard applications.

\section{Perspectives for future research}
\vspace{0mm}
\label{Secfinal}

The present work offers quite rich research perspectives. The asymptotic distributions of the sample depths~$D_\theta(V,P_n)$ and~$D(V,P_n)$ as well as those of the corresponding deepest shape estimators could be studied. Investigating the robustness properties of these shape estimators would also be of interest, in particular to see whether these estimators have a high breakdown point. Regarding hypothesis testing, it would be desirable to define depth-based tests for other shape problems, such as testing the null hypothesis that two populations share the same shape. 

Another key point is related to computational aspects. Since Tyler shape depth was defined through halfspace depth, it can in principle be evaluated by using the numerous packages that are dedicated to halfspace depth. The definition of Tyler shape depth suggests that evaluation of this depth in dimension~$k$ requires the computation of halfspace depth in dimension~$k^2$. Fortunately, redundancies in the random vector~$W_{\theta,V}$ reduce the dimension from~$k^2$ to~$d_k=k(k+1)/2-1$ as shown by the following result. 

\begin{theorem}
\label{dimD}
Let~$P=P^X$ be a probability measure over~$\R^k$ and fix~$\theta\in\R^k$. 
Let~${\rm vech}(A)$ be the vector stacking the lower-diagonal entries of~$A=(A_{ij})$ on top of each other and~${\rm vech}_0\,A$ be~${\rm vech}(A)$ deprived of its first component. Then,  
$D_\theta(V,P)
\!
=
\!
D(0,P^{\tilde{W}_{\theta,V}}) 
\!
=
\!
 \inf_{u\in\mathcal{S}^{d_k-1}}
\!
 {\rm pr}(
  u^\T \tilde{W}_{\theta,V}
 \geq 0 
)
$, with~$
\tilde{W}_{\theta,V}
\!
=
\!
{\rm vech}_0\{ U_{\theta,V} U^\T_{\theta,V} - (1/k) I_k \}
$.
\end{theorem}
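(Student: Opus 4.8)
The plan is to show that the halfspace depth of the origin with respect to the distribution of $W_{\theta,V}\in\R^{k^2}$ coincides with the halfspace depth of the origin with respect to the distribution of $\tilde W_{\theta,V}\in\R^{d_k}$, by exhibiting an explicit linear relation between the two random vectors and tracking how this relation transforms the defining infima. The starting point is the observation that the $k\times k$ matrix $A_{\theta,V}=U_{\theta,V}U_{\theta,V}^\T-(1/k)I_k$ is \emph{symmetric} and \emph{traceless}, so the $k^2$ entries of $W_{\theta,V}={\rm vec}\,A_{\theta,V}$ satisfy $k(k-1)/2$ symmetry constraints $(A_{\theta,V})_{ij}=(A_{\theta,V})_{ji}$ and the one trace constraint $\sum_i (A_{\theta,V})_{ii}=0$. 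Hence $W_{\theta,V}$ lives almost surely in a linear subspace of $\R^{k^2}$ of dimension exactly $k^2-k(k-1)/2-1=k(k+1)/2-1=d_k$, and $\tilde W_{\theta,V}={\rm vech}_0\,A_{\theta,V}$ is nothing but a choice of linear coordinates on that subspace: there is a fixed injective linear map $L:\R^{d_k}\to\R^{k^2}$ (reading the remaining entries off by symmetry and recovering the dropped diagonal entry from tracelessness) with $W_{\theta,V}=L\tilde W_{\theta,V}$, $P$-almost surely, and this $L$ does not depend on $\theta$, $V$ or $P$.

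First I would make the subspace/coordinate statement precise and record that $L$ has full column rank $d_k$. Next, starting from the definition
$
D(0,P^{W_{\theta,V}})=\inf_{u\in\mathcal S^{k^2-1}}{\rm pr}(u^\T W_{\theta,V}\ge 0),
$
I would substitute $W_{\theta,V}=L\tilde W_{\theta,V}$ to get ${\rm pr}\big((L^\T u)^\T\tilde W_{\theta,V}\ge 0\big)$, so the infimum only depends on $u$ through $v:=L^\T u\in\R^{d_k}$. As $u$ ranges over $\R^{k^2}\setminus\{0\}$, $v=L^\T u$ ranges over all of $\R^{d_k}$ (since $L^\T$ is surjective, $L$ having full column rank), and in particular over all of $\R^{d_k}\setminus\{0\}$; conversely every nonzero $v\in\R^{d_k}$ arises from some $u$ with $u\ne 0$. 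Because ${\rm pr}(v^\T\tilde W_{\theta,V}\ge 0)$ is invariant under positive rescaling of $v$, the infimum over $v\in\R^{d_k}\setminus\{0\}$ equals the infimum over $v\in\mathcal S^{d_k-1}$, which is exactly $D(0,P^{\tilde W_{\theta,V}})$. Combining with the first equality $D_\theta(V,P)=D(0,P^{W_{\theta,V}})$ from Definition~\ref{shapedepthD} yields the claimed chain of equalities.

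The one genuinely delicate point is the handling of the value $u=0$ versus $v=0$: one must be sure that the directions $u\in\mathcal S^{k^2-1}$ lying in the kernel of $L^\T$ (for which $u^\T W_{\theta,V}=0$ a.s., giving probability $1$) do not lower the infimum, and that no nonzero admissible direction on the $\tilde W$ side is missed. This is handled by noting that for $u\in\ker L^\T$ the probability is $1\ge D_\theta(V,P)$ so such $u$ are never where the infimum is attained or approached, and that every $v\in\mathcal S^{d_k-1}$ is $L^\T u/\|L^\T u\|$ for some suitable $u\notin\ker L^\T$. A secondary bookkeeping point is to verify the a.s.\ identity $W_{\theta,V}=L\tilde W_{\theta,V}$ also in the degenerate case $X=\theta$, where $U_{\theta,V}=0$ and both sides equal ${\rm vec}\{-(1/k)I_k\}$ and $L\,{\rm vech}_0\{-(1/k)I_k\}$ respectively, which again match by construction of $L$. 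Everything else is routine linear algebra, so I expect the main obstacle to be merely presenting the kernel/normalization argument cleanly rather than any real difficulty.
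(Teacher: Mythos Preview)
Your proposal is correct and follows essentially the same route as the paper: both exhibit a full-column-rank matrix $L$ (the paper calls it $H$, built as the duplication matrix times the map recovering the first diagonal entry from tracelessness) such that $W_{\theta,V}=L\tilde W_{\theta,V}$, then use surjectivity of $L^\T$ to match the two infima. You are in fact more careful than the paper about the kernel of $L^\T$ and the degenerate case $X=\theta$, both of which the paper handles implicitly by taking the infimum over all of $\R^{k^2}$ rather than over the sphere.
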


It follows that, for~$k=2$ and~$3$, Tyler shape depth dominates  its halfspace counterpart from \cite{PVB17} from a computational point of view. There is, though, probably room for ad hoc algorithms to compute Tyler shape depth more efficiently. It would also be desirable to design iterative algorithms for the computation of deepest shape matrices.



%
\appendix

\section{Appendix}

As in the main manuscript, $\rm pr$ will refer to probability under the probability measure~$P$ at hand. However, it will sometimes be needed to emphasize the underlying probability measure, in which case we will write~${\rm pr}_P$, ${\rm pr}_Q$, ${\rm pr}_{P_n}$, etc. 

Many of the subsequent results require the following lemma.

\begin{lemma}
	\label{LemrewritingD}
Let~$P$ be a probability measure over~$\R^k$ and fix~$\theta\in\R^k$. Write
$
C^M_{\theta,V}
 =
 \big\{ x \in \R^{k}\setminus\{\theta\} : 
 (u_{\theta,V}^x)^\T M u_{\theta,V}^x 
 \geq  {{\rm tr}(M)/k}  
 \big\}
$
and
$\tilde{C}^M_{\theta,V}
 =
 \big\{ x \in \R^{k} : 
 (u_{\theta,V}^x)^\T M u_{\theta,V}^x 
 \geq  {{\rm tr}(M)/k}  
 \big\}
,
 $
where~$u_{\theta,V}^x$ is defined as~$V^{-1/2}(x-\theta)/\|V^{-1/2}(x-\theta)\|$ if~$x\neq \theta$ and as~$0$ otherwise. 
Then, for any~$V\in\mathcal{P}_{k,{\rm tr}}$ and any~$r\in\R$,
$$
D_\theta(V,P)
=
 \inf_{M\in\mathcal{M}^{\rm all}_k}
 {\rm pr}\big( \tilde{C}^M_{\theta,V} \big)
=
 \inf_{M\in\mathcal{M}^{\rm all}_{k,F}}
 {\rm pr}\big( \tilde{C}^M_{\theta,V} \big)
=
 \inf_{M\in\mathcal{M}^{\rm all}_k}
 {\rm pr}\big(C^M_{\theta,V} \big)
=
 \inf_{M\in\mathcal{M}^r_k}
 {\rm pr}\big( C^M_{\theta,V} \big)
 ,
$$ 
where~$\mathcal{M}^{\rm all}_k$ collects the $k\times k$ symmetric matrices with arbitrary trace, $\mathcal{M}^r_k$ is the subset of~$\mathcal{M}^{\rm all}_k$ of matrices with trace~$r$, and where~$\mathcal{M}^{\rm all}_{k,F}$ is the collection of matrices in~$\mathcal{M}^{\rm all}_k$ with Frobenius norm one. 
\end{lemma}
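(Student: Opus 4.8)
# Proof Plan for Lemma~\ref{LemrewritingD}

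The plan is to establish the chain of equalities by moving between the unit-sphere formulation of halfspace depth and the matrix formulation, then successively restricting and relaxing the index set over which the infimum is taken, and finally handling the distinction between $\tilde C^M_{\theta,V}$ and $C^M_{\theta,V}$ (which differ only by the point $\theta$).

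\textbf{Step 1: From vectors on the sphere to symmetric matrices.} Starting from Definition~\ref{shapedepthD}, $D_\theta(V,P)=\inf_{u\in\mathcal{S}^{k^2-1}}{\rm pr}(u^\T W_{\theta,V}\geq 0)$. Every $u\in\R^{k^2}$ can be written uniquely as $u={\rm vec}\,N$ for some $k\times k$ matrix $N$, and since $W_{\theta,V}={\rm vec}\{U_{\theta,V}U_{\theta,V}^\T-(1/k)I_k\}$ lies in the subspace of vectorized symmetric matrices, only the symmetric part of $N$ contributes: $u^\T W_{\theta,V}=\langle N,\,U_{\theta,V}U_{\theta,V}^\T-(1/k)I_k\rangle=\langle M,\,U_{\theta,V}U_{\theta,V}^\T-(1/k)I_k\rangle$ with $M=(N+N^\T)/2$. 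Using the cyclic property of the trace, $\langle M,U_{\theta,V}U_{\theta,V}^\T\rangle=U_{\theta,V}^\T M U_{\theta,V}$, so the event $\{u^\T W_{\theta,V}\geq 0\}$ becomes $\{U_{\theta,V}^\T M U_{\theta,V}\geq {\rm tr}(M)/k\}$. Thus the infimum over $u\in\mathcal{S}^{k^2-1}$ equals the infimum over symmetric $M$ with $\|M\|_F=1$, i.e. over $\mathcal{M}^{\rm all}_{k,F}$, of ${\rm pr}(\tilde C^M_{\theta,V})$ — here I note that $U_{\theta,V}=0$ when $X=\theta$, so the defining inequality at such points reads $0\geq{\rm tr}(M)/k$, which is exactly why one must be careful with the event $\tilde C$ including $x=\theta$. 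This gives the second equality ($\inf_{\mathcal{M}^{\rm all}_k}=\inf_{\mathcal{M}^{\rm all}_{k,F}}$ at once, since the event is unchanged under positive scaling of $M$ and is trivial for $M=0$).

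\textbf{Step 2: Scaling and trace-shift invariance; passing to $C^M$ and fixed trace $r$.} For $\inf_{\mathcal{M}^{\rm all}_k}{\rm pr}(\tilde C^M)=\inf_{\mathcal{M}^{\rm all}_k}{\rm pr}(C^M)$, observe that $\tilde C^M$ and $C^M$ differ by at most the single point $\theta$: indeed $\theta\in\tilde C^M_{\theta,V}$ iff $0\geq{\rm tr}(M)/k$, i.e. iff ${\rm tr}(M)\leq 0$. So ${\rm pr}(\tilde C^M)={\rm pr}(C^M)+{\rm pr}[\{\theta\}]\mathbf 1\{{\rm tr}(M)\leq 0\}$. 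Taking infima, the term ${\rm pr}[\{\theta\}]\mathbf 1\{{\rm tr}(M)\leq 0\}$ can be driven to zero over the subfamily $\{{\rm tr}(M)>0\}$ without changing the infimum of ${\rm pr}(C^M)$ (because for any $M$ with ${\rm tr}(M)=0$ the event $C^M_{\theta,V}$ is unchanged by perturbing $M$ slightly to have small positive trace — more carefully, one replaces $M$ by $M+\varepsilon I_k$: this shifts the threshold by $\varepsilon$ and shrinks $C^M$, but by a right-continuity / monotone-limit argument in $\varepsilon\downarrow 0$ the probability converges back). Thus $\inf_{\mathcal{M}^{\rm all}_k}{\rm pr}(\tilde C^M)=\inf_{\mathcal{M}^{\rm all}_k}{\rm pr}(C^M)$. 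For the last equality $\inf_{\mathcal{M}^{\rm all}_k}{\rm pr}(C^M)=\inf_{\mathcal{M}^r_k}{\rm pr}(C^M)$ with arbitrary fixed $r$: the event $C^M_{\theta,V}$ depends on $M$ only through the pair $(M,{\rm tr}(M))$ and is invariant under $M\mapsto cM$ for $c>0$ and under $M\mapsto M+sI_k$ for $s\in\R$ (the latter shifts both sides of the inequality $U^\T M U\geq{\rm tr}(M)/k$ by $s$ since $U^\T U=1$ on the support of $U_{\theta,V}$ restricted to $x\neq\theta$). Hence, given any symmetric $M$, one may first translate by a multiple of $I_k$ to adjust the trace to any nonzero target and then rescale to make it exactly $r$ (if $r\neq 0$); if $r=0$ one translates to trace zero directly. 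Either way the family $\{C^M:M\in\mathcal{M}^r_k\}$ coincides with $\{C^M:M\in\mathcal{M}^{\rm all}_k,\ M\neq 0\text{ after normalization}\}$, and the degenerate $M$ (proportional to $I_k$, giving the whole space) only increases the probability, so it is harmless.

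\textbf{Main obstacle.} I expect the delicate point to be the passage between $\tilde C^M$ and $C^M$ when ${\rm pr}[\{\theta\}]>0$, combined with the boundary case ${\rm tr}(M)=0$: one must verify that infimizing over $M$ with strictly positive trace (where the atom at $\theta$ does not enter $\tilde C^M$) recovers the same value as infimizing over all $M$, which requires an approximation argument ($M\leadsto M+\varepsilon I_k$) together with the observation that ${\rm pr}(C^{M+\varepsilon I_k}_{\theta,V})\to{\rm pr}(C^M_{\theta,V})$ as $\varepsilon\downarrow 0$ by monotone convergence of the nested events $\{U^\T M U\geq {\rm tr}(M)/k+\varepsilon\}$. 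The rest is bookkeeping: matching up the vectorized-symmetric-matrix inner product with the quadratic form $U^\T M U$, and exploiting the two invariances (positive scaling of $M$; translation of $M$ by multiples of $I_k$) that make the fixed-trace family $\mathcal{M}^r_k$ as expressive as $\mathcal{M}^{\rm all}_k$.
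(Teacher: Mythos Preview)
Your approach is essentially the paper's: pass from $u\in\mathcal{S}^{k^2-1}$ to symmetric matrices via ${\rm vec}$, use positive-scaling invariance of $\tilde C^M$ for the Frobenius-norm restriction, and use the translation invariance $M\mapsto M+sI_k$ of $C^M$ for both the fixed-trace restriction and the removal of the atom at $\theta$.

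However, you over-complicate the passage from $\tilde C^M$ to $C^M$ and even contradict yourself. You correctly state (twice) that $C^M_{\theta,V}$ is invariant under $M\mapsto M+sI_k$, since for $x\neq\theta$ one has $\|u^x_{\theta,V}\|=1$ and both sides of the defining inequality shift by $s$. That invariance is all that is needed: given any $M$, choose $s$ with ${\rm tr}(M+sI_k)>0$; then ${\rm pr}(C^{M+sI_k}_{\theta,V})={\rm pr}(C^M_{\theta,V})$ exactly, while the indicator term ${\rm pr}[\{\theta\}]\mathbb{I}\{{\rm tr}(M)\leq 0\}$ vanishes for $M+sI_k$. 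This is precisely how the paper argues. Your ``more careful'' version, claiming that $M\mapsto M+\varepsilon I_k$ ``shrinks $C^M$'' and then invoking a monotone-limit argument, is both unnecessary and incorrect on its own terms: the set does not shrink, it is literally unchanged. Similarly, for $\mathcal{M}^r_k$ you propose translating and then rescaling, but translation alone already reaches any prescribed trace~$r$; the extra rescaling step is harmless but superfluous. Drop the approximation detour and use the exact translation invariance throughout; the proof then matches the paper's and your ``main obstacle'' disappears.
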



\begin{proof}
It directly follows from the definition of Tyler shape depth that
$$
	D_\theta(V,P) 
=
 \inf_{v\in\R^{k^2}}
{\rm pr}\big[ \big\{ x \in \R^{k} : 
 v^\T 
{\rm vec}\,\{u_{\theta,V}^x(u_{\theta,V}^x)^\T- {(1/k)} I_k\}
 \geq 0  
 \big\} \big]
 .
$$
When~$v$ runs over~$\R^{k^2}$, the matrix~$M$ satisfying $v={\rm vec}(M^\T)$ runs over the collection~$\mathcal{N}_k$ of \mbox{$k\times k$} matrices. Since~$(u_{\theta,V}^x)^\T M u_{\theta,V}^x=(u_{\theta,V}^x)^\T \{(M+M^\T)/2\} u_{\theta,V}^x$ for any~$M\in\mathcal{N}_k$, this yields 
\begin{eqnarray}
D_\theta(V,P) 
&=&
 \inf_{M\in\mathcal{N}_k}
 {\rm pr}\big[ \big\{ x \in \R^{k^2} : 
 {\rm tr}\big[ M \{u_{\theta,V}^x(u_{\theta,V}^x)^\T- {(1/k)} I_k\} \big]
 \geq 0 
 \big\} \big]
 \nonumber
\\[2mm]
&=&
 \inf_{M\in\mathcal{N}_k}
 {\rm pr}\big(
\tilde{C}_{\theta,V}^M 
 \big)
=
 \inf_{M\in\mathcal{M}_k^{\rm all}}
 {\rm pr}\big( 
\tilde{C}_{\theta,V}^M 
 \big)
.
\label{altdepthconst}
\end{eqnarray}
Letting~$\mathbb{I}(A)$ be equal to one if condition~A holds and to zero otherwise, this provides
\begin{equation}
D_\theta(V,P) 
=
 \inf_{M\in\mathcal{M}_k^{\rm all}}
 \Big(
 {\rm pr}\big( 
C_{\theta,V}^M 
 \big)
+
{\rm pr}[\{\theta\}] 
 \mathbb{I}\big\{
 {\rm tr}(M) \leq 0
  \big\}
  \Big)
=
 \inf_{M\in\mathcal{M}_k^{\rm all}}
{\rm pr}\big( 
C_{\theta,V}^M 
 \big)
,
\label{altdepthconst2}
\end{equation}
where we have used the fact that~${\rm pr}\big(C_{\theta,V}^M \big)$ is unchanged when~$M$ is replaced with~$M+\lambda I_k$ for any~$\lambda\in\R$. The same invariance property explains that the infimum over~$\mathcal{M}_k^{\rm all}$ in~(\ref{altdepthconst2}) may be replaced with an infimum over~$\mathcal{M}_k^r$ for any~$r$. Finally, the result for~$\mathcal{M}^{\rm all}_{k,F}$ follows from~(\ref{altdepthconst}) by noting that~$\tilde{C}_{\theta,V}^{\lambda M}=\tilde{C}_{\theta,V}^M$ for any~$\lambda>0$ and that~$M=0$ cannot provide the infimum in~(\ref{altdepthconst}). 
The proof is complete. 
\end{proof}
\vspace{0mm}


\begin{proof}[Proof of Theorem~\ref{Theorcontinuity}]
(i) Fix~$M\in\mathcal{M}^{\rm all}_k$ and consider~$
\tilde{C}^M
=
\tilde{C}^M_{0,I_k}
$,
where~$\tilde{C}^M_{\theta,V}$ was defined in Lemma~\ref{LemrewritingD}. 
Since~$\tilde{C}^M$ is closed, the mapping~$P\mapsto {\rm pr}_P( \tilde{C}^M )$ is upper semicontinuous for weak convergence. Now, Slutzky's lemma entails that, as~$d(V,V_0)\to 0$, the measure defined by~$B\mapsto {\rm pr}( \theta+V^{1/2}B)$ converges weakly to the one defined by~$B\mapsto {\rm pr}(\theta+ V_0^{1/2}B)$. Therefore,~$V \mapsto {\rm pr}( \theta+V^{1/2} \tilde{C}^M)={\rm pr}(\tilde{C}^M_{\theta,V})$ is upper semicontinuous at~$V_0$. From Lemma~\ref{LemrewritingD}, we then obtain that
$$
V
\mapsto
D_\theta(V,P)
=
\inf_{M\in\mathcal{M}^{\rm all}_k} 
{\rm pr}( \tilde{C}^M_{\theta,V} )
,
$$
is upper semicontinuous, as it is the infimum of a collection of upper semicontinuous functions. 
(ii) The result follows from the fact that the depth region~$R_{\theta}(\alpha,P)$ is the inverse image of~$[\alpha,\infty)$ by the upper semicontinuous function~$V\mapsto D_\theta(V,P)$. 
(iii) Fix a sequence~$(V_n)$ in~$\mathcal{P}_{k,{\rm tr}}$ such that~$d(V_n,V_0)\to 0$. In view of Lemma~\ref{LemrewritingD} again, we can, for any~$n$, pick~$M_n\in\mathcal{M}^{\rm all}_{k,F}$ such that 
$
P( \tilde{C}^{M_n}_{\theta,V_n} )
\leq 
D_\theta(V_n,P)+\frac{1}{n}
$.
Compactness of~$\mathcal{M}^{\rm all}_{k,F}$ ensures that we can extract a subsequence~$(M_{n_\ell})$ of~$(M_n)$ that converges to~$M_0\in\mathcal{M}^{\rm all}_{k,F}$. Writing~$\mathbb{I}(B)$ for the indicator function of the set~$B$, the dominated convergence theorem then yields that
$$
{\rm pr}( \tilde{C}^{M_{n_\ell}}_{\theta,V_{n_\ell}} )
-
{\rm pr}(\tilde{C}^{M_{0}}_{\theta,V_{0}} )
=
\int_{\R^k} 
\big\{
\mathbb{I}( \tilde{C}^{M_{n_\ell}}_{\theta,V_{n_\ell}} )
-
\mathbb{I}( \tilde{C}^{M_{0}}_{\theta,V_{0}} )
\big\}
\,dP
\to 0
$$
as~$\ell\to\infty$. The absolute continuity assumption on~$P$ guarantees that~$\mathbb{I}( \tilde{C}^{M_{n_\ell}}_{\theta,V_{n_\ell}} )
-
\mathbb{I}( \tilde{C}^{M_{0}}_{\theta,V_{0}} )
\to 0$ $P$-almost everywhere. Consequently,
$$
\liminf_{n\to\infty} D_\theta(V_n,P)
=
\liminf_{n\to\infty} 
{\rm pr}\big( \tilde{C}^{M}_{\theta,V_{n}} \big)
= 
\liminf_{\ell\to\infty} 
{\rm pr}\big( \tilde{C}^{M_{n_\ell}}_{\theta,V_{n_\ell}} \big)
=
{\rm pr}\big(\tilde{C}^{M_{0}}_{\theta,V_{0}} \big)
\geq
D_\theta(V_0,P)
.
$$
We conclude that, if~$P$ is absolutely continuous with respect to the Lebesgue measure, then~$V\mapsto D_\theta(V,P)$ is also lower semicontinuous, hence continuous.
\end{proof}
\vspace{0mm}


The proof of Theorem~\ref{Theorboundedness} requires the following result.

 \begin{lemma}
\label{lemtp}
Let~$P$ be a probability measure over~$\R^k$ and fix~$\theta\in\R^k$. 
Write~$u_{\theta}^x=(x-\theta)/\|x-\theta\|$ if~$x\neq \theta$ and 0 otherwise.
For any~$c\geq 0$, further let
 $
 t_{\theta,P}(c)=\sup_{v\in \mathcal{S}^{k-1}} {\rm pr}( |v^\T u_{\theta}^X| \leq  c )$, so that~$t_{\theta,P}=t_{\theta,P}(0)=\sup_{v\in \mathcal{S}^{k-1}} {\rm pr}\{ v^\T (X-\theta)=0 \}$. Then,
$t_{\theta,P}(c)\to t_{\theta,P}$  as~$c\to 0$.
\end{lemma}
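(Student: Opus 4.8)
The plan is to dispose of the trivial direction by monotonicity and then to reduce the statement to an interchange of a supremum over the sphere with the limit $c\to0$. Since the sets $\{x:|v^\T u_{\theta}^x|\le c\}$ are nondecreasing in $c$, the map $c\mapsto t_{\theta,P}(c)$ is nondecreasing, so $\lim_{c\to0^+}t_{\theta,P}(c)=\inf_{c>0}t_{\theta,P}(c)$ exists and is at least $t_{\theta,P}(0)=t_{\theta,P}$. Everything therefore comes down to proving $\lim_{c\to0^+}t_{\theta,P}(c)\le t_{\theta,P}$.

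To this end I would fix an arbitrary sequence $c_n\downarrow0$ and, for each $n$, pick a near-maximizer $v_n\in\mathcal{S}^{k-1}$ with ${\rm pr}(|v_n^\T u_{\theta}^X|\le c_n)\ge t_{\theta,P}(c_n)-1/n$. Compactness of $\mathcal{S}^{k-1}$ then lets me pass to a subsequence $(v_{n_j})$ with $v_{n_j}\to v_*$ for some $v_*\in\mathcal{S}^{k-1}$. The crux is the pointwise bound
$$
\limsup_{j\to\infty}\mathbb{I}\big(|v_{n_j}^\T u_{\theta}^x|\le c_{n_j}\big)\le \mathbb{I}\big(v_*^\T u_{\theta}^x=0\big),\qquad x\in\R^k .
$$
Indeed, if $v_*^\T u_{\theta}^x\neq0$ then in particular $x\neq\theta$, the unit vector $u_{\theta}^x$ is fixed, $v_{n_j}^\T u_{\theta}^x\to v_*^\T u_{\theta}^x\neq0$ while $c_{n_j}\to0$, so eventually $|v_{n_j}^\T u_{\theta}^x|>c_{n_j}$; when $v_*^\T u_{\theta}^x=0$ the right-hand side is $1$ and the inequality is automatic. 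The exceptional event $\{X=\theta\}$, on which $u_{\theta}^X=0$, belongs to all sets involved and falls under the case $v_*^\T u_{\theta}^x=0$, so it is handled uniformly. Applying the reverse Fatou lemma (the indicators are dominated by the integrable constant $1$) gives $\limsup_j{\rm pr}(|v_{n_j}^\T u_{\theta}^X|\le c_{n_j})\le{\rm pr}(v_*^\T u_{\theta}^X=0)$, and since $\{v_*^\T u_{\theta}^x=0\}=\{v_*^\T(x-\theta)=0\}$ the right-hand side is at most $\sup_{v\in\mathcal{S}^{k-1}}{\rm pr}\{v^\T(X-\theta)=0\}=t_{\theta,P}$.

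Combining the pieces, along the subsequence one has $t_{\theta,P}(c_{n_j})\le{\rm pr}(|v_{n_j}^\T u_{\theta}^X|\le c_{n_j})+1/n_j$, whence $\limsup_j t_{\theta,P}(c_{n_j})\le t_{\theta,P}$; since the full sequence $t_{\theta,P}(c_n)$ converges by monotonicity in $n$, its limit coincides with this subsequential limit, giving $\lim_{c\to0^+}t_{\theta,P}(c)\le t_{\theta,P}$ and hence equality. I expect the main obstacle to be precisely the interchange of $\sup_v$ with $\lim_{c\to0}$: neither expression is obviously continuous in $v$ in a way uniform in $c$, so the bound cannot be read off directly. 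The device that makes it work is to extract a limiting direction $v_*$ from the near-maximizers via compactness and then push the limit onto the indicator functions, where it is controlled pointwise; the only secondary technicality is the atom of $P$ at $\theta$, which the stated pointwise inequality already absorbs.
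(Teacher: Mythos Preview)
Your proof is correct and follows essentially the same skeleton as the paper's: monotonicity for the easy inequality, near-maximizers $v_n$, compactness of $\mathcal{S}^{k-1}$ to extract a limit direction, and then passing the limit through the probability. The only difference is in that last step: the paper builds a decreasing family of spherical-cap ``tubes'' $\cup_{v\in C_\ell}\{y:|v^\T y|\le c_{n_\ell}\}$ and invokes continuity of measure from above, whereas you go straight to the pointwise indicator bound and apply reverse Fatou; your route is slightly more direct and avoids the auxiliary geometric construction, but the two arguments are equivalent in spirit.
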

\vspace{0mm}

\begin{proof}[Proof of Lemma~\ref{lemtp}]
Since~$t_{\theta,P}(c)$ is increasing in~$c$ over~$[0,\infty)$ and is larger than or equal to~$t_{\theta,P}$ for any positive~$c$, we have that
$
\tilde{t}_{\theta,P}
=
\lim_{c\to 0} 
t_{\theta,P}(c)
$
exists and is such that~$\tilde{t}_{\theta,P}\geq t_{\theta,P}$. Now, fix a decreasing sequence~$(c_n)$ converging to~$0$ and consider an arbitrary sequence~$(v_n)$ such that
$$
{\rm pr}( |v_n^\T u_{\theta}^X|\leq c_n ) 
\geq
t_{\theta,P}(c_n) - (1/n)
.
$$
Since~$\mathcal{S}^{k-1}$ is compact, we can consider a subsequence~$(v_{n_\ell})$ that converges to~$v_0\in\mathcal{S}^{k-1}$; without loss of generality, we can of course assume that this subsequence is such that~$(v_0^\T v_{n_\ell})$ is an increasing sequence.
Let then~$C_\ell=\{ v\in\mathcal{S}^{k-1} : v_0^\T v \geq v_0^\T v_{n_\ell} \}$. Clearly, $C_\ell$ is a decreasing sequence of sets with $\cap_\ell C_\ell=\{v_0\}$, so that 
$$
\lim_{\ell\to\infty} 
{\rm pr}\big[ u_{\theta}^X\in \cup_{v\in C_\ell} \{ y:|v^\T y|\leq c_{n_\ell} \} \big]
=
{\rm pr}\big[ u_{\theta}^X\in  \{ y:|v_0^\T y|\leq 0 \} \big] 
=
{\rm pr}\big( |v_0^\T u_{\theta}^X|=0 \big) 
.
$$  
Now, for any~$\ell$, 
we have 
$
{\rm pr}\big[ u_{\theta}^X\in \cup_{v\in C_\ell} \{y: |v^\T y|\leq c_{n_\ell} \} \big] 
\geq  
{\rm pr}( |v_{n_\ell}^\T u_{\theta}^X|\leq c_{n_\ell} ) 
\geq 
t_{\theta,P}(c_{n_\ell})-(1/n_\ell)
$,
which implies that~$t_{\theta,P} \geq {\rm pr}( |v_0^\T u_{\theta}^X|=0 )  \geq \tilde{t}_{\theta,P}$. 
\end{proof}
\vspace{0mm}


\begin{proof}[Proof of Theorem~\ref{Theorboundedness}]
Fix~$V\in\mathcal{P}_{k,{\rm tr}}$ and denote as~$\lambda_1(V)$ the largest eigenvalue of~$V$. Similarly, denote $\lambda_k(V)$. Possible ties are unimportant below. Letting~$v_1(V)$ and~$v_k(V)$ be arbitrary corresponding unit eigenvectors,  Lemma~\ref{LemrewritingD} provides, with~$M_V=v_1(V) \linebreak 
v_1^\T(V)\in \mathcal{M}^{\rm all}_k$
\begin{eqnarray*}
\hspace{-1mm} 
D_\theta(V,P) 
&\leq &
 {\rm pr}\Big\{ 
 (u_{\theta,V}^X)^\T M_{V} u_{\theta,V}^X 
 \geq  {\rm tr}(M_{V})/k
 ,
 X\neq \theta 
 \Big\}
\\[2mm]
& = &
 {\rm pr}\Big[ 
k \lambda^{-1}_1(V) \{v_1^\T(V)(X-\theta)\}^2 \geq 
\|V^{-1/2}(X-\theta)\|^2
 ,
 X\neq \theta 
 \Big]
\\[2mm]
& \leq &
 {\rm pr}\Big\{ 
\|V^{-1/2}(X-\theta)\|^2 \leq k \|X-\theta\|^2 
 ,
 X\neq \theta 
 \Big\}\\[2mm]
&=& 
 {\rm pr}\big( 
\|V^{-1/2}u_{\theta,I_k}^X\|^2 \leq k 
 ,
 u_{\theta}^X\neq 0
 \big)
,
\end{eqnarray*}
where we used the inequality~$\lambda_1(V)\geq 1$ which follows from the constraint~${\rm tr}(V)=k$, and where~$u_{\theta}^s$ is defined in Lemma~\ref{lemtp}.
Therefore, 
\begin{equation}
\label{tousebound}
D_\theta(V,P) 
 \leq 
 {\rm pr}\big[ 
 \lambda_1(V^{-1}) \{v_1^\T(V) u_{\theta}^X\}^2
\leq k 
 \big]
\leq
 t_{\theta,P}\big[ \{k \lambda_k(V)\}^{1/2} \big]
.
\end{equation}
Now, ad absurdum, take~$\varepsilon>0$ such that~$R_\theta(t_{\theta,P}+\varepsilon,P)$ is unbounded. This implies that there exists a sequence~$(V_n)$ in~$\mathcal{P}_{k,{\rm tr}}$ satisfying~$D_\theta(V_n,P)\geq t_{\theta,P}+\varepsilon$ for any~$n$ and for which~$d(V_n,I_k)\to \infty$. Since~$\lambda_1(V_n)<{\rm tr}(V_n)=k$, we must have that~$\lambda_k(V_n)\to 0$. Lemma~\ref{lemtp} and~(\ref{tousebound}) then imply that $D_\theta(V_n,P)<t_{\theta,P}+\varepsilon$ for~$n$ large enough, a contradiction. Consequently, $R_\theta(\alpha,P)$ is bounded for any~$\alpha>t_{\theta,P}$. 

Now, Lemma~C.1 in \cite{PVB17} readily implies that a bounded subset of~$\mathcal{P}_{k,{\rm tr}}$ is also totally bounded, in the sense that, for any~$\varepsilon>0$, it can be covered by finitely many balls of the form~$B(V,\varepsilon)=\{ \tilde{V}\in\mathcal{P}_{k,{\rm tr}} : d(\tilde{V},V)<\varepsilon\}$. Part~(i) of the result and Theorem~\ref{Theorcontinuity}(ii) thus entail that, for any~$\alpha>t_{\theta,P}$, the region~$R_{\theta}(\alpha,P)$ is closed and totally bounded. The result then follows from the completeness of the metric space~$(\mathcal{P}_{k,{\rm tr}},d)$.
  \end{proof} 
\vspace{0mm} 
 

\begin{proof}[Proof of Theorem~\ref{Theorexistmax}]
Let $\alpha_*=\sup_{V\in\mathcal{P}_{k,{\rm tr}}} D_\theta(V,P)$. By assumption,~$R_\theta(t_{\theta,P},P)$ is non-empty. Thus, $\alpha_*\geq t_{\theta,P}$ and the result holds if~$\alpha_*=t_{\theta,P}$. We may therefore assume that~$\alpha_*>t_{\theta,P}$. For any~$n$, pick then~$V_n$ in~$R_\theta(\alpha_*-1/n,P)$, where~$R_\theta(\alpha,P)$ is defined as~$\mathcal{P}_{k,{\rm tr}}$ for~$\alpha<0$. Fix~$\varepsilon\in(0,\alpha_*-t_{\theta,P})$. For~$n$ large enough, all terms of the sequence~$(V_n)$ belong to the compact set~$R_\theta(\alpha_*-\varepsilon,P)$; see Theorem~\ref{Theorboundedness}. Thus, there exists a subsequence~$(V_{n_k})$ that converges in~$R_\theta(\alpha_*-\varepsilon,P)$, to~$V_*$ say. For any~$\varepsilon'\in(0,\varepsilon)$, all~$(V_{n_k})$ eventually belong to the closed set~$R_\theta(\alpha_*-\varepsilon',P)$, so that~$V_*\in R_\theta(\alpha_*-\varepsilon',P)$. Therefore, $\alpha_*-\varepsilon'\leq D_\theta(V_*,P)\leq \alpha_*$ for any such~$\varepsilon'$, which establishes the result. 
\end{proof}
\vspace{0mm}


The proof of Theorem~\ref{TheorQuasic} requires the following preliminary result.

\begin{lemma}
For any~$y\in\R^k$ and any $k\times k$ symmetric matrix~$M$, the mapping~$V\mapsto {\rm tr}(MV) y^\T V^{-1} y$ is quasi-convex, that is, for any~$V_a,V_b\in\mathcal{P}_{k,{\rm tr}}$ and any~$t\in [0,1]$, ${\rm tr}(MV_t) y^\T V_t^{-1} y \leq \max\{{\rm tr}(MV_a) y^\T V^{-1}_a y,{\rm tr}(MV_b) y^\T V^{-1}_b y\}$, with~$V_t=(1-t)V_a+t V_b$. 
\label{quasiconvexe}	
\end{lemma}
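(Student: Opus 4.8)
The plan is to reduce the two‑matrix statement to a one‑dimensional assertion along the segment and then verify that the resulting function of the scalar parameter has convex sublevel sets. Fix $V_a,V_b\in\mathcal{P}_{k,{\rm tr}}$; if $y=0$ the map is identically zero, so assume $y\neq 0$. With $V_t=(1-t)V_a+tV_b$, set $\phi(t)={\rm tr}(MV_t)\,y^\T V_t^{-1}y$ for $t\in[0,1]$. Since quasi-convexity of $V\mapsto {\rm tr}(MV)\,y^\T V^{-1}y$ on $\mathcal{P}_{k,{\rm tr}}$ is exactly the requirement that $\phi$ be quasi-convex on $[0,1]$ for every such segment, it suffices to show that $\{t\in[0,1]:\phi(t)\le c\}$ is an interval for each $c\in\R$.

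The argument rests on two observations. First, $t\mapsto{\rm tr}(MV_t)=(1-t){\rm tr}(MV_a)+t\,{\rm tr}(MV_b)$ is affine. Second, $V\mapsto(y^\T V^{-1}y)^{-1}$ is concave and positive on $\mathcal{P}_k$: completing the square gives the variational identity $(y^\T V^{-1}y)^{-1}=\min\{z^\T Vz:z\in\R^k,\ z^\T y=1\}$, exhibiting it as the pointwise infimum of the linear functionals $V\mapsto z^\T Vz={\rm tr}(zz^\T V)$ over the affine set $\{z:z^\T y=1\}$. Restricting to the segment, $\psi(t):=(y^\T V_t^{-1}y)^{-1}$ is concave and strictly positive on $[0,1]$, and $\phi(t)={\rm tr}(MV_t)/\psi(t)$.

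Assuming in addition that ${\rm tr}(MV_t)\ge 0$ throughout $[0,1]$, quasi-convexity is now immediate. In that case $\phi\ge 0$, so $\{t:\phi(t)\le c\}=\emptyset$ for $c<0$; and for $c\ge 0$, clearing the positive denominator $\psi(t)$ gives $\{t:\phi(t)\le c\}=\{t:{\rm tr}(MV_t)-c\,\psi(t)\le 0\}$. The map $t\mapsto{\rm tr}(MV_t)-c\,\psi(t)$ is the sum of an affine function and $-c\,\psi$, which is convex because $c\ge 0$ and $\psi$ is concave; hence it is convex and its $0$-sublevel set is an interval. Thus all sublevel sets of $\phi$ are intervals, i.e. $\phi$ is quasi-convex, and the lemma follows.

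The single delicate point is the sign of ${\rm tr}(MV_t)$: if it changes sign along the segment then $\phi$ takes negative values, $-c\,\psi$ fails to be convex for the relevant negative levels $c$, and the sublevel-set argument collapses. This quantity is nonnegative on all of $\mathcal{P}_{k,{\rm tr}}$ exactly when $M$ is positive semidefinite, and that is the case in which the lemma is applied within the proof of Theorem~\ref{TheorQuasic}: there one first reduces the infimum in Lemma~\ref{LemrewritingD} to positive semidefinite matrices using the invariance ${\rm pr}(C^M_{\theta,V})={\rm pr}(C^{M+\lambda I_k}_{\theta,V})$ recorded in that proof, and then, after the congruence $M\mapsto V^{1/2}MV^{1/2}$ applied to each $V$ separately, the convexity of the depth regions $R_\theta(\alpha,P)$ becomes a statement about sublevel sets of exactly the map studied here. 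The only remaining computation to fill in is the variational identity $(y^\T V^{-1}y)^{-1}=\min\{z^\T Vz:z^\T y=1\}$, obtained from the first-order optimality conditions together with the strict convexity of $z\mapsto z^\T Vz$.
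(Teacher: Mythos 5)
Your argument for the case where ${\rm tr}(MV_t)\ge 0$ along the whole segment (in particular for positive semidefinite $M$) is correct, and it is a genuinely different route from the paper's: writing $\phi(t)={\rm tr}(MV_t)\,y^\T V_t^{-1}y$, instead of normalizing $V$ by ${\rm tr}(MV)$ and invoking the weighted harmonic--arithmetic matrix inequality, you exploit the affinity of $t\mapsto{\rm tr}(MV_t)$ and the concavity of $\psi(t)=(y^\T V_t^{-1}y)^{-1}$, which your variational identity $(y^\T V^{-1}y)^{-1}=\min\{z^\T Vz:\ z^\T y=1\}$ does establish, and you conclude via convexity of $t\mapsto{\rm tr}(MV_t)-c\,\psi(t)$ for $c\ge 0$. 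Note moreover that the mixed-sign case ${\rm tr}(MV_a)\le 0\le{\rm tr}(MV_b)$ (the paper's case (ii)) is recoverable by your own method: at the zero $t_0$ of the affine map $t\mapsto{\rm tr}(MV_t)$ one has $\phi(t_0)=0$, so $\phi\le 0\le\max\{\phi(0),\phi(1)\}$ on $[0,t_0]$, while on $[t_0,1]$ your sublevel-set argument applied to the subsegment from $V_{t_0}$ to $V_b$ gives $\phi(t)\le\max\{\phi(t_0),\phi(1)\}=\phi(1)$. Since the lemma quantifies over all symmetric $M$, this case is part of the statement and you should have written it out rather than restricting to $M\succeq 0$ from the outset.

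The genuine gap is the remaining case ${\rm tr}(MV_a)<0$ and ${\rm tr}(MV_b)<0$. Appealing to how the lemma is used in Theorem~\ref{TheorQuasic} does not close it: that replaces the stated lemma by a weaker one, and the reduction of the infimum in \eqref{gskri} to positive semidefinite $M$ (the relevant invariance there is $M\mapsto M+\lambda V^{-1}$, which depends on $V$) is an additional claim you assert rather than prove. More importantly, no argument can close this case, because the statement is false for such $M$: take $k=2$, $M=-I_2$, $y=(1,1)^\T$, $V_a={\rm diag}(1.9,\,0.1)$, $V_b={\rm diag}(0.1,\,1.9)$ and $t=1/2$, so that $V_t=I_2$ and ${\rm tr}(MV)=-2$ for every $V\in\mathcal{P}_{2,{\rm tr}}$; the claimed inequality becomes $-4\le-2(1/1.9+1/0.1)\approx-21.1$, which fails (equivalently, $V\mapsto y^\T V^{-1}y$ is convex but not quasi-concave). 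This also locates the defect in the paper's own case (i): when both traces are negative, $V_a/{\rm tr}(MV_a)$ and $V_b/{\rm tr}(MV_b)$ are negative definite and the cited harmonic--arithmetic inequality reverses. So your instinct to work only with ${\rm tr}(MV)\ge 0$, i.e.\ essentially with $M\succeq 0$, is the right one, but a correct treatment must restate the lemma accordingly (e.g.\ under the hypothesis $\max\{{\rm tr}(MV_a),{\rm tr}(MV_b)\}\ge 0$, which your method then proves in full) and must justify that this restricted version suffices where the lemma is invoked; your proposal gestures at both steps but carries out neither.
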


\begin{proof}
	We treat two cases separately. (i) Assume first that~${\rm tr}(MV_a){\rm tr}(MV_b)>0$. Write
	$$
	\frac{V_t}{{\rm tr}(MV_t)}
	=
	(1-s_t) \frac{V_a}{{\rm tr}(MV_a)} + s_t \frac{V_b}{{\rm tr}(MV_b)}
,
\quad \textrm{with }
s_t
=
\frac{t\, {\rm tr}(MV_b)}{(1-t){\rm tr}(MV_a)+t\, {\rm tr}(MV_b)}
\cdot
	$$ 
Since~$s_t\in[0,1]$, the weighted harmonic-arithmetic matrix inequality then shows that, for any~$y\in\R^k$, 
\begin{eqnarray*}
y^\T \bigg\{ \frac{V_t}{{\rm tr}(MV_t)} \bigg\}^{-1} y
&\leq &
y^\T \bigg[ 	(1-s_t) \bigg\{\frac{V_a}{{\rm tr}(MV_a)}\bigg\}^{-1} + s_t \bigg\{\frac{V_b}{{\rm tr}(MV_b)}\bigg\}^{-1} \bigg] y
\\[2mm]
&\leq &
\max\bigg[ y^\T\bigg\{\frac{V_a}{{\rm tr}(MV_a)}\bigg\}^{-1}y, y^\T\bigg\{\frac{V_b}{{\rm tr}(MV_b)}\bigg\}^{-1}y\bigg]
,
\end{eqnarray*}
as was to be showed; we refer to Lemma~2.1(vii) in \citealp{LawLim13} for the aforementioned inequality.
(ii) Assume then that ${\rm tr}(MV_a){\rm tr}(MV_b)\leq 0$. Without loss of generality, assume that~${\rm tr}(MV_a)\leq 0$ and~${\rm tr}(MV_b)\geq 0$. If~${\rm tr}(MV_a)={\rm tr}(MV_b)=0$, then~${\rm tr}(MV_t)=0$ for any~$t$ and the result trivially holds. Hence, we may assume that~${\rm tr}(MV_a)\neq 0$ or~${\rm tr}(MV_b)\neq 0$, which implies that~${\rm tr}(MV_{t_0})=0$ for a unique~$t_0\in[0,1]$. From continuity, pick then~$\delta\in(0,1-t_0)$ such that, for any~$t\in [t_0,t_0+\delta)$,
\begin{eqnarray*}
	{\rm tr}(MV_t) y^\T V^{-1}_t y 
	&\leq &
	{\rm tr}(MV_b) y^\T V^{-1}_b y
\\[2mm]
&\leq &
\max\big\{{\rm tr}(MV_a) y^\T V^{-1}_a y,{\rm tr}(MV_b) y^\T V^{-1}_b y\big\}
.
\end{eqnarray*}
By applying Part~(i) of the proof with~$V_{t_0+\delta}$ and~$V_b$, we obtain that, for any~$t\in[t_0+\delta,1]$, 
\begin{eqnarray*}
{\rm tr}(MV_t) y^\T V^{-1}_t y
&\leq &
\max\big\{{\rm tr}(MV_{t_0+\delta}) y^\T V^{-1}_{t_0+\delta} y,{\rm tr}(MV_b) y^\T V^{-1}_b y\big\}
\\[2mm]
&\leq &
\max\big\{{\rm tr}(MV_a) y^\T V^{-1}_a y,{\rm tr}(MV_b) y^\T V^{-1}_b y\big\}
.
\end{eqnarray*} 
Since~${\rm tr}(MV_t) y^\T V^{-1}_t y\leq 0
\leq \max\big\{{\rm tr}(MV_a) y^\T V^{-1}_a y,{\rm tr}(MV_b) y^\T V^{-1}_b y\big\}
$ for any~$t\in[0,t_0]$, the result follows.
\end{proof}
\vspace{0mm}
 

\begin{proof}[Proof of Theorem~\ref{TheorQuasic}]
(i) Write~$V_t=(1-t)V_a+t V_b$, where~$V_a,V_b\in\mathcal{P}_{k,{\rm tr}}$ and~$t\in [0,1]$ are fixed. First note that, letting~$d^2_\theta(V)=(X-\theta)^\T V^{-1} (X-\theta)$, Lemma~\ref{LemrewritingD} yields
\begin{eqnarray}
D_\theta(V,P)
&=&
 \inf_{M\in\mathcal{M}_k^{\rm all}}
 {\rm pr}\big\{ 
 (X-\theta)^\T V^{-1/2} M V^{-1/2} (X-\theta) 
 \geq  (1/k) {\rm tr}(M) d^2_\theta(V)
,X\neq \theta
 \big\}
\nonumber 
\\[2mm]
&=&
 \inf_{M\in\mathcal{M}_k^{\rm all}}
 {\rm pr}\big\{ 
 (X-\theta)^\T M (X-\theta) 
 \geq  (1/k) {\rm tr}(MV) d^2_\theta(V)
,X\neq \theta
 \big\}
.
\label{gskri}
\end{eqnarray} 
Writing again~$V_t=(1-t)V_a+t V_b$, Lemma~\ref{quasiconvexe} thus yields that, for any~$M\in \mathcal{M}^{\rm all}_k$,
\begin{eqnarray*}
\lefteqn{
	\hspace{2mm}  
	 {\rm pr}\big\{
 (X-\theta)^\T M (X-\theta) 
 \geq  (1/k) {\rm tr}(MV_t) d^2_\theta(V_t)
,X\neq \theta
 \big\}
}
\\[2mm] 
& &
\hspace{13mm} 
 \geq 
 {\rm pr}\big[ 
 (X-\theta)^\T M (X-\theta) 
 \geq  (1/k)  \max\{{\rm tr}(MV_a)d^2_\theta(V_a),{\rm tr}(MV_b)d^2_\theta(V_b)\}
,X\neq \theta
 \big]
\\[2mm]
& &
\hspace{13mm} 
 = 
\min\Big[
 {\rm pr}\big\{ 
 (X-\theta)^\T M (X-\theta) 
 \geq  (1/k) {\rm tr}(MV_a) d^2_\theta(V_a)
,X\neq \theta
 \big\}
,
\\[2mm]
& &
\hspace{33mm} 
 {\rm pr}\big\{ 
 (X-\theta)^\T M (X-\theta) 
 \geq  (1/k) {\rm tr}(MV_b) d^2_\theta(V_b)
,X\neq \theta
 \big\}
\Big]
 \\[2mm]
& &
\hspace{13mm} 
 \geq 
\min\{D_\theta(V_a,P),D_\theta(V_b,P)\}
.
\end{eqnarray*}
The result then follows from~(\ref{gskri}).
(ii) If~$V_a,V_b\in R_{\theta}(\alpha,P)$, then Part~(i) of the result entails that~$D_\theta(V_t,P) \geq \min\{D_\theta(V_a,P),D_\theta(V_b,P)\}\geq \alpha$, so that~$V_t\in R_{\theta}(\alpha,P)$. 
\end{proof}
\vspace{0mm}




The proof of Theorem~\ref{TheorFishconst} requires both following lemmas. 


\begin{lemma}
\label{LemConsist1}
Let~$P$ be elliptical over~$\R^k$ with location~$0$ and shape~$I_k$. 
Then, 
$
D_0(I_k,P)
=
(1-{\rm pr}[\{0\}])
{\rm pr}(U_1^2>1/k)
,
$
where~$U=(U_1,\ldots,U_k)^\T$ is uniformly distributed over the unit sphere $\mathcal{S}^{k-1}$.
\end{lemma}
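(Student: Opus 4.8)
The plan is to reduce the computation of~$D_0(I_k,P)$ to a halfspace-type infimum over symmetric matrices, evaluated at the uniform law on the sphere, and then to identify the minimising matrix. Since~$P$ is elliptical with location~$0$ and shape~$I_k$, I would first write~$X\stackrel{d}{=}RU$ with~$U$ uniform over~$\mathcal{S}^{k-1}$ independent of the nonnegative scalar~$R$, and note~${\rm pr}[\{0\}]={\rm pr}(R=0)=:p_0$. On~$\{R>0\}$ the sign~$u^X_{0,I_k}$ equals~$U$, so conditioning on~$\{R>0\}$ gives, for every symmetric~$M$ and in the notation of Lemma~\ref{LemrewritingD}, ${\rm pr}(C^M_{0,I_k})=(1-p_0)\,{\rm pr}(U^\T M U\geq {\rm tr}(M)/k)$. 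Applying Lemma~\ref{LemrewritingD} with~$r=0$ then yields $D_0(I_k,P)=(1-p_0)\,q$, where $q=\inf_M {\rm pr}(U^\T M U\geq 0)$, the infimum being over symmetric~$M$ with~${\rm tr}(M)=0$ (and this probability is unchanged under~$M\mapsto cM$ for~$c>0$). Everything thus reduces to showing~$q={\rm pr}(U_1^2>1/k)$; I would assume~$k\geq 2$ throughout, the case~$k=1$ being degenerate.

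The bound~$q\leq{\rm pr}(U_1^2>1/k)$ is immediate upon testing~$M=e_1e_1^\T-I_k/k$, which gives~${\rm pr}(U^\T M U\geq 0)={\rm pr}(U_1^2\geq 1/k)$; and~$\{u\in\mathcal{S}^{k-1}:u_1^2=1/k\}$ is a finite union of~$(k-2)$-dimensional spheres, hence $\sigma$-null for~$k\geq 2$, so this equals~${\rm pr}(U_1^2>1/k)$. For the reverse inequality, the heart of the matter, I would fix a nonzero traceless symmetric~$M$ and, by rotational invariance of~$U$, assume~$M={\rm diag}(\mu_1,\dots,\mu_k)$ with~$\mu_1\geq\cdots\geq\mu_k$ and~$\sum_i\mu_i=0$, so~$\mu_1>0>\mu_k$. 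Writing~$U_i=Z_i/\|Z\|$ with~$Z=(Z_1,\dots,Z_k)^\T$ standard $k$-variate normal, $\{U^\T M U\geq 0\}=\{\sum_i\mu_iZ_i^2\geq 0\}$, and it suffices to prove that~$\mu^*=(k-1,-1,\dots,-1)$ minimises~$\mu\mapsto{\rm pr}(\sum_i\mu_iZ_i^2\geq 0)$ over such~$\mu$.

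I would establish this by a symmetrisation argument in two moves: (a) replacing two nonpositive weights~$\mu_i,\mu_j$ by their common average~$(\mu_i+\mu_j)/2$ does not increase~${\rm pr}(\sum_\ell\mu_\ell Z_\ell^2\geq 0)$; and (b) since~${\rm pr}(\sum_\ell\mu_\ell Z_\ell^2=0)=0$ for traceless~$\mu\neq 0$, one has~${\rm pr}(\sum_\ell\mu_\ell Z_\ell^2\geq 0)=1-{\rm pr}(\sum_\ell(-\mu_\ell)Z_\ell^2\geq 0)$, so applying~(a) to~$-\mu$ shows that equalising two nonnegative weights of~$\mu$ does not decrease the probability, hence that moving all positive weight of~$\mu$ onto a single coordinate does not increase it. Starting from any such~$\mu$, I would first apply~(b) to reach a vector~$(a,-b_2,\dots,-b_k)$ with~$a>0$ and~$b_j\geq 0$ summing to~$a$, and then apply~(a) to equalise~$-b_2,\dots,-b_k$; the resulting vector is a positive multiple of~$(k-1,-1,\dots,-1)$ and the probability has not increased along the way. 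Since for~$\mu^*$ itself $\{\sum_i\mu^*_iZ_i^2\geq 0\}=\{(k-1)Z_1^2\geq\sum_{i\geq 2}Z_i^2\}=\{kZ_1^2\geq\sum_iZ_i^2\}=\{U_1^2\geq 1/k\}$, whose probability is~${\rm pr}(U_1^2>1/k)$ by the previous paragraph, this gives~$q={\rm pr}(U_1^2>1/k)$ and therefore~$D_0(I_k,P)=(1-p_0)\,{\rm pr}(U_1^2>1/k)$.

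The main obstacle is clearly move~(a): that merging two nonpositive weights cannot increase~${\rm pr}(\sum_\ell\mu_\ell Z_\ell^2\geq 0)$. This will \emph{not} follow from a pointwise comparison of the laws of the two affected terms, since the distribution functions of~$aZ_i^2+bZ_j^2$ and of~$\tfrac{a+b}{2}(Z_i^2+Z_j^2)$ cross; I expect it to require conditioning on the remaining coordinates together with a peakedness/monotonicity property of a two-term weighted~$\chi^2$, or an explicit computation with the corresponding density. A fallback I would keep in mind is to first secure attainment of the infimum by compactness, using that the Frobenius-normalised set~$\mathcal{M}^{\rm all}_{k,F}$ of Lemma~\ref{LemrewritingD} is compact and~$M\mapsto{\rm pr}(U^\T M U\geq 0)$ is upper semicontinuous on it, and then analyse the minimiser through perturbations; this, however, seems no less delicate than~(a).
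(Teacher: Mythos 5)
Your reduction is exactly the paper's: via Lemma~\ref{LemrewritingD}, conditioning on $\{X\neq 0\}=\{R>0\}$ and diagonalising $M$ by rotational invariance of $U$, both you and the paper arrive at $D_0(I_k,P)=(1-{\rm pr}[\{0\}])\inf_{\lambda}{\rm pr}\{\sum_{\ell}\lambda_\ell(U_\ell^2-1/k)\geq 0\}$, and the upper bound from $M=e_1e_1^\T-I_k/k$ is immediate. The difference is what happens next: the paper does \emph{not} prove the remaining identity $\inf_\lambda {\rm pr}\{\sum_\ell\lambda_\ell(U_\ell^2-1/k)\geq 0\}={\rm pr}(U_1^2>1/k)$ at all --- it invokes Theorem~2 of \cite{PVB17b} --- whereas you attempt a self-contained proof, and that attempt has a genuine gap. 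Your step~(a), that averaging two nonpositive weights cannot increase ${\rm pr}(\sum_\ell\mu_\ell Z_\ell^2\geq 0)$, is the entire crux of the lower bound, and you leave it unproven (you explicitly say you ``expect'' it to require a peakedness or conditioning argument). Without it, nothing after the upper bound is established, so the lemma is not proved.

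There is also a secondary logical gap in the bookkeeping even granting~(a). From~(b) you get that \emph{averaging} two nonnegative weights does not decrease the probability; you then conclude that \emph{concentrating} all positive weight on one coordinate does not increase it. But the second statement is not the contrapositive of the first: pairwise full averaging does not connect an arbitrary positive-weight configuration to the concentrated one (e.g.\ $(3,1)$ is not reachable from $(4,0)$ by averaging pairs), so you would need either partial averagings (T-transforms) together with the Hardy--Littlewood--P\'olya characterisation of majorisation, with~(a) strengthened accordingly, or an iterative-averaging limit argument combined with continuity of $\mu\mapsto{\rm pr}(\sum_\ell\mu_\ell Z_\ell^2\geq 0)$ at traceless $\mu\neq 0$; the same limiting issue arises when you ``equalise'' the $k-1$ negative weights in finitely many pairwise averages. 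The economical fix is to do what the paper does and quote Theorem~2 of \cite{PVB17b} for the value of the infimum; if you want a self-contained argument, you must actually prove~(a) (in its T-transform form) and run the majorisation argument carefully.
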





\begin{lemma}
\label{LemConsist3}
Let~$P$ be elliptical over~$\R^k$ with location~$0$ and shape~$I_k$. 
   Then, for any $V\in\mathcal{P}_{k,{\rm tr}}\setminus \{I_k\}$, 
$
D_0(V,P)
<
(1-{\rm pr}[\{0\}])
{\rm pr}(U_1^2>1/k)
$,
where 
$U=(U_1,\ldots,U_k)^\T$ is uniformly distributed over~$\mathcal{S}^{k-1}$.
\end{lemma}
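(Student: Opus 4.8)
The plan is to compare $D_0(V,P)$ with $D_0(I_k,P)$, the latter being \emph{exactly} the right-hand side of the claim by Lemma~\ref{LemConsist1}; so it suffices to show $D_0(V,P)<D_0(I_k,P)$ for every $V\in\mathcal{P}_{k,{\rm tr}}\setminus\{I_k\}$ (we may assume ${\rm pr}[\{0\}]<1$, the degenerate case $P=\delta_0$ being trivial in that both sides then vanish). Since $P$ is spherical, $P^{OX}=P$ for every orthogonal $O$, so Theorem~\ref{Theoraffineinvariance} (applied at $\theta=0$ with $A=O$) gives $D_0(OVO^\T,P)=D_0(V,P)$. I may therefore assume $V={\rm diag}(\lambda_1,\dots,\lambda_k)$ with $\lambda_1\geq\cdots\geq\lambda_k>0$ and $\sum_i\lambda_i=k$; since $V\neq I_k$, the eigenvalues are not all equal, whence $\lambda_1>1>\lambda_k$.

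Next I would bound $D_0(V,P)$ from above by testing the single matrix $M={\rm diag}(1,0,\dots,0)$, for which ${\rm tr}(M)=1$, in the representation of Lemma~\ref{LemrewritingD}, giving $D_0(V,P)\leq{\rm pr}(C^M_{0,V})$. Writing $X=RU$ with $R\geq0$ and $U=(U_1,\dots,U_k)^\T$ uniform over $\mathcal{S}^{k-1}$ and independent of $R$, one has $(u^X_{0,V})_1^2=\lambda_1^{-1}U_1^2\big/\sum_i\lambda_i^{-1}U_i^2$ on $\{R>0\}$, so that
\[
{\rm pr}(C^M_{0,V})=(1-{\rm pr}[\{0\}])\,{\rm pr}\!\left(\frac{\lambda_1^{-1}U_1^2}{\sum_i\lambda_i^{-1}U_i^2}\geq\frac1k\right).
\]
Because $\lambda_i^{-1}\geq\lambda_1^{-1}$ for all $i$ and $\sum_{i\geq2}U_i^2=1-U_1^2$, we have $\sum_{i\geq2}\lambda_i^{-1}U_i^2\geq\lambda_1^{-1}(1-U_1^2)$, and this is strict almost surely since $\lambda_k^{-1}>\lambda_1^{-1}$ and $U_k\neq0$ a.s.; rearranging the event $\lambda_1^{-1}U_1^2\geq\frac1k\sum_i\lambda_i^{-1}U_i^2$ then shows it is contained, up to a null set, in $\{U_1^2>1/k\}$. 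Hence $D_0(V,P)\leq(1-{\rm pr}[\{0\}])\,{\rm pr}(U_1^2>1/k)=D_0(I_k,P)$.

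To turn this into a strict inequality I would produce a nonempty open subset of $\mathcal{S}^{k-1}$ on which $U_1^2>1/k$ while $\lambda_1^{-1}U_1^2\big/\sum_i\lambda_i^{-1}U_i^2<1/k$: at the configuration $U_1^2=1/k$, $U_k^2=(k-1)/k$, $U_i^2=0$ otherwise, the ratio equals $\lambda_1^{-1}/(\lambda_1^{-1}+(k-1)\lambda_k^{-1})$, which is $<1/k$ precisely because $\lambda_1>\lambda_k$; increasing $U_1^2$ slightly past $1/k$ and invoking continuity yields such a point, hence a whole neighbourhood. Since the uniform law on $\mathcal{S}^{k-1}$ charges nonempty open sets and $R$ is independent of $U$ with ${\rm pr}(R>0)=1-{\rm pr}[\{0\}]>0$, the set $E=\{R>0\}\cap\{U_1^2>1/k\}\cap\{(u^X_{0,V})_1^2<1/k\}$ has positive probability; as $C^M_{0,V}$ and $E$ are disjoint and both contained, up to a null set, in $\{R>0\}\cap\{U_1^2>1/k\}$, we get ${\rm pr}(C^M_{0,V})<{\rm pr}(\{R>0\}\cap\{U_1^2>1/k\})=(1-{\rm pr}[\{0\}])\,{\rm pr}(U_1^2>1/k)$, and the conclusion follows.

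The main obstacle is exactly this strictness step. The upper bound $D_0(V,P)\leq D_0(I_k,P)$ drops out immediately once the right test matrix is chosen, but to beat it one must locate a concrete configuration of the sign direction on which that test matrix wastes probability mass and check that the governing inequality is strict there with room to spare; the rest — the orthogonal reduction to diagonal $V$, the rewriting $X=RU$, and the identification of $D_0(I_k,P)$ via Lemma~\ref{LemConsist1} — is routine.
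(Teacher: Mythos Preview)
Your proof is correct and follows essentially the same route as the paper's: reduce to diagonal $V$ by orthogonal invariance, then bound $D_0(V,P)$ from above using the single test direction that picks out the $(1,1)$ entry of $U_{0,V}U_{0,V}^\T-(1/k)I_k$ (your $M={\rm diag}(1,0,\dots,0)$ is exactly the paper's $e_1\in\R^{k^2}$), and observe that the resulting event is contained in $\{U_1^2\geq 1/k\}$ with strict containment unless all eigenvalues coincide. The only difference is that you spell out the strictness step in detail by exhibiting an explicit sphere configuration where the gap is positive, whereas the paper simply asserts that equality in the key inequality forces $\lambda_\ell=\lambda_1$ for all~$\ell$; your version is more careful on this point but not genuinely different in idea.
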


 

\begin{proof}[Proof of Lemma~\ref{LemConsist1}]
In the spherical setup considered, we have that, for any~$M\in \mathcal{M}_k^{\rm all}$, 
$$
{\rm pr}\big\{ 
(X^\T M X)/\|X\|^2
 \geq  {\rm tr}(M)/k 
 , \, X\neq 0
 \big\}
=
{\rm pr}\big\{ 
U^\T M U
 \geq  {\rm tr}(M)/k 
\big\}
{\rm pr}( X\neq 0 )
,
$$
where $U=(U_1,\ldots,U_k)^\T$ is uniform over~$\mathcal{S}^{k-1}$.  
 Lemma~\ref{LemrewritingD} then entails that
$$
D_0(I_k,P)
=
(1-{\rm pr}[\{0\}])
 \inf_{M\in\mathcal{M}_k^{\rm all}}
{\rm pr}\big\{ 
U^\T M U 
 \geq  {\rm tr}(M) /k
 \big\}
.
$$
Decomposing~$M$ into~$O\Lambda O^\T$, where~$O$ is a $k\times k$ orthogonal matrix and where~$\Lambda={\rm diag}(\lambda_1,\ldots,\lambda_k)$ is a diagonal matrix, this yields 
\begin{eqnarray*}
	\lefteqn{
D_0(I_k,P)
=
(1-{\rm pr}[\{0\}])
 \inf_{\lambda\in\R^k}
{\rm pr}\bigg( 
\sum_{\ell=1}^k \lambda_\ell U_\ell^2 
 \geq  k^{-1} \sum_{\ell=1}^k \lambda_\ell 
 \bigg)
}
\\[2mm]
& & 
\hspace{15mm} 
=
(1-{\rm pr}[\{0\}])
 \inf_{\lambda\in\R^k}
{\rm pr}\bigg\{ 
\sum_{\ell=1}^k \lambda_\ell \big(U_\ell^2 - k^{-1}\big)
 \geq 0
 \bigg\}
=
(1-{\rm pr}[\{0\}])
 \inf_{\lambda\in\R^k}
p(\lambda) 
.
\end{eqnarray*}
By using successively the facts that~$p(0)=1$ and~$p(\lambda)=p(\lambda/\|\lambda\|)$ for any~$\lambda\in\R^k\setminus\{0\}$, we obtain
\begin{equation}
	\label{lastinf}
	D_0(I_k,P)
=
(1-{\rm pr}[\{0\}])
 \inf_{\lambda\in\R^k\setminus\{0\}}
p(\lambda) 
=
(1-{\rm pr}[\{0\}])
 \inf_{\lambda\in\mathcal{S}^{k-1}}
p(\lambda) 
.
\end{equation}
The result then follows from Theorem~2 from \cite{PVB17b}, that states that the last infimum in~(\ref{lastinf}) is equal to~${\rm pr}(U_1^2>1/k)$.   
\end{proof}
\vspace{0mm}




\begin{proof}[Proof of Lemma~\ref{LemConsist3}]
Fix~$V\in \mathcal{P}_{k,{\rm tr}}$ and let~$X$ be a random $k$-vector with~$P=P^X$. 
Write~$V=O{\Lambda}O^\T$, where~$O$ is a $k\times k$ orthogonal matrix and~${\Lambda}={\rm diag}(\lambda_1,\ldots, \lambda_k)$ is a diagonal matrix with 
$\lambda_1\geq \lambda_2\geq \ldots\geq \lambda_k$. 
The affine invariance property from Theorem~\ref{Theoraffineinvariance} entails that
$$
D_{0}(V,P^X)
=
D_{0}\big(O^\T V O,P^{O^\T X}\big)
=
D_{0}\big(\Lambda,P^X\big)
.
$$
Denoting by~$e_1$ the first vector of the canonical basis of~$\R^{k^2}$, 
we then have
\begin{eqnarray}
\lefteqn{
\hspace{-13mm} 
D_0(V,P^X)
=
D_{0}\big(\Lambda,P^X\big)
\leq
{\rm pr}\big[e_1^\T {\rm vec}\big\{U_{0,\Lambda} U_{0,\Lambda}^\T- (1/k) I_k \big\} \geq 0 \big]
}
\nonumber
\\[2mm]
& &
\hspace{-7mm} 
= 
{\rm pr}\big[\{(U_{0,\Lambda})_1\}^2 \geq 1/k \big]
=
{\rm pr}\big[\{(U_{0,\Lambda})_1\}^2 \geq 1/k, X\neq 0 \big]
\nonumber
\\[2mm]
& & 
\hspace{-7mm} 
=
{\rm pr}\big\{ \lambda_1^{-1} X_1^2 \geq (1/k)
{\textstyle{\sum_{\ell=1}^k}} \lambda_\ell^{-1} X_\ell^2
, X\neq 0
 \big\}
 \leq  
{\rm pr}\big\{ X_1^2 \geq (1/k)
{\textstyle{\sum_{\ell=1}^k}} X_\ell^2
, X\neq 0
 \big\}
\\[2mm]
& & 
\hspace{-7mm} 
=
{\rm pr}\big( X_1^2/\|X\|^2 \geq 1/k
, X\neq 0
 \big)
=
{\rm pr}(X\neq 0)
{\rm pr}\big( 
U_1^2 \geq 1/k
 \big)
 ,
\label{ineqconst}
\end{eqnarray}
where $U=(U_1,\ldots,U_k)^\T$ is uniform over~$\mathcal{S}^{k-1}$. To have~$D_0(V,P^X)={\rm pr}(X\neq 0){\rm pr}\big(U_1^2 \geq 1/k \big)
$, the inequality in~(\ref{ineqconst}) needs to be an equality, which requires that~$\lambda_\ell=\lambda_1$ for all~$\ell$, hence that~$V=I_k$. 
\end{proof}



We can now prove Theorem~\ref{TheorFishconst}.

\begin{proof}[Proof of Theorem~\ref{TheorFishconst}]
Lemmas~A\ref{LemConsist1}-A\ref{LemConsist3} establish the result in the spherical case associated with~$\theta_0=0$ and~$V_0=I_k$. For general values of~$\theta_0$ and~$V_0$, note that $Y=V_0^{-1/2}(X-\theta_0)$ is elliptical with location~$0$, shape~$I_k$, and satisfies~${\rm pr}(Y=0)={\rm pr}(X=\theta_0)$. Writing
\begin{equation}
\label{asin55}	
W_0=\frac{k V_0^{-1/2}VV_0^{-1/2}}{{\rm tr}(V_0^{-1/2}VV_0^{-1/2})},
\end{equation}
affine invariance then entails that 
$$
D_{\theta_0}(V,P^X)
=
D_0(
W_0
,
P^Y
)
\leq D_0(I_k,P^Y)
=
D_0(
W_0
,
P^Y
)
=
D_{\theta_0}(V_0,P^X)
,
$$
with equality if and only if 
$W_0=I_k$, that is, if and only if $V=V_0$. 
\end{proof}
\vspace{0mm}


\begin{proof}[Proof of Theorem~\ref{Theoraffineinvariance}]
In the proof of Theorem~\ref{TheorQuasic}, we showed that
\begin{eqnarray*}
	\lefteqn{
D_\theta(V,P)
=
 \inf_{M\in\mathcal{M}_k^{\rm all}}
 {\rm pr}\big\{ 
 (X-\theta)^\T V^{-1/2} M V^{-1/2} (X-\theta) 
}
\\[0mm]
& & 
\hspace{53mm} 
 \geq  (1/k) {\rm tr}(M) 
 (X-\theta)^\T V^{-1} (X-\theta)
,X\neq \theta
 \big\}
 .
\end{eqnarray*}
Using the fact that 
$
V_A^{1/2}
=
k^{1/2} A V^{1/2} O/\{{\rm tr}(AV\! A^\T)\}^{1/2}
$
for some $k\times k$ orthogonal matrix~$O$, this readily yields
\begin{eqnarray*}
	\lefteqn{
\hspace{-3mm} 
D_{A\theta+b}(V_A,P)
=
 \inf_{M\in\mathcal{M}_k^{\rm all}}
{\rm pr}\big\{ 
 (X-\theta)^\T  
V^{-1/2} O
  M 
  O^\T V^{-1/2}  (X-\theta) 
}
\\[0mm]
& & 
\hspace{36mm} 
 \geq  (1/k) {\rm tr}(OMO^\T) 
  (X-\theta)^\T V^{-1} (X-\theta)
,X\neq \theta
 \big\}
=
D_{\theta}(V,P)
,
\end{eqnarray*}
as was to be shown.
The affine-equivariance property of the depth regions readily follows.  
\end{proof}
\vspace{0mm}


The proof of Theorem~\ref{TheorDistribfreeness} requires the following lemma, whose proof is straightforward, hence is omitted.

\begin{lemma}
\label{lemineq}
For any~$v_1,v_2$ such that~$v_1^2+v_2^2<1$, we have
$$
\frac{(1-v_1^2)^{1/2}-|v_2|}{(1-v_1^2)^{1/2}+|v_2|}
\leq
\frac{(1-v_1^2)^{1/2}+|v_2|}{(1-v_1^2)^{1/2}-|v_2|}
\leq
\frac{1+(v_1^2+v_2^2)^{1/2}}{1-(v_1^2+v_2^2)^{1/2}}
\cdot
$$
\end{lemma}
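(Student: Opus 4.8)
The plan is to reduce the whole statement to an elementary one-variable comparison via the substitution $a=(1-v_1^2)^{1/2}$ and $b=|v_2|$. With this notation the hypothesis $v_1^2+v_2^2<1$ becomes exactly $0\leq b<a$, and moreover $a\in(0,1]$ since $v_1^2\geq 0$. The quantity $c=(v_1^2+v_2^2)^{1/2}$ then satisfies $c^2=1-a^2+b^2$ and $c\in[0,1)$, and the two claimed inequalities read $\frac{a-b}{a+b}\leq\frac{a+b}{a-b}$ and $\frac{a+b}{a-b}\leq\frac{1+c}{1-c}$. Before anything else I would record that $a+b>0$, $a-b>0$ and $1-c>0$: this is precisely where the strict inequality $v_1^2+v_2^2<1$ enters, and it licenses the cross-multiplications used below.

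For the first inequality, cross-multiplying reduces the claim to $(a-b)^2\leq(a+b)^2$, i.e.\ to $0\leq 4ab$, which is immediate; equality occurs exactly when $b=0$.

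For the second inequality, cross-multiplying reduces the claim to $(a+b)(1-c)\leq(a-b)(1+c)$; expanding both sides and simplifying, this collapses to $b\leq ac$. Since both sides are nonnegative, it is equivalent to $b^2\leq a^2c^2=a^2(1-a^2+b^2)$, i.e.\ to $(1-a^2)(a^2-b^2)\geq 0$; this holds because $a\leq 1$ and $0\leq b<a$, so $1-a^2=v_1^2\geq 0$ and $a^2-b^2=1-v_1^2-v_2^2>0$.

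Every step is a manipulation of nonnegative quantities, so there is no genuine obstacle; the only point deserving attention is the strict positivity of the three denominators $a+b$, $a-b$ and $1-c$, noted at the outset, since without it the cross-multiplications would not be equivalences. Once that is in place the verification is purely algebraic, which is why the statement is labelled straightforward.
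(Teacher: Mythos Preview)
Your argument is correct: the substitution $a=(1-v_1^2)^{1/2}$, $b=|v_2|$, $c=(v_1^2+v_2^2)^{1/2}$ reduces both inequalities to elementary algebra, and the strict inequality $v_1^2+v_2^2<1$ is exactly what makes all three denominators strictly positive so that the cross-multiplications are legitimate equivalences. The paper itself omits the proof as ``straightforward'', so there is no alternative approach to compare against; your write-up simply supplies the routine verification the authors chose to leave out.
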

\vspace{3mm}

%

\vspace{-3mm}

\begin{proof}[Proof of Theorem~\ref{TheorDistribfreeness}]
(i) 
If~$P=P^X$ is elliptical with location~$\theta_0$ and shape~$V_0$, then $V_0^{-1/2}(X-\theta_0)$ is equal in distribution to~$RU$, where~$U$ is uniformly distributed over the unit sphere~$\mathcal{S}^{k-1}$ and is independent of the nonnegative random variable~$R$. Theorem~\ref{Theoraffineinvariance} 
then yields
\begin{equation}
\label{fsgt}	
D_{\theta_0}(V,P^X)
=
D_0(
W_0
,
P^{RU}
)
,
\end{equation}
where~$W_0$ is as in~(\ref{asin55}). Now, for any~$\tilde{V}\in\mathcal{P}_{k,{\rm tr}}$, Lemma~\ref{LemrewritingD} entails that
\begin{eqnarray}
D_0(\tilde{V},P^{RU})
&=&
 \inf_{M\in\mathcal{M}_k^0}
{\rm pr}\big( 
U^\T \tilde{V}^{-1/2} M \tilde{V}^{-1/2} U
 \geq 
 0
 , R> 0
 \big)
\label{inftocomputepre}	
\\[2mm]
&=&
{\rm pr}(R> 0)
\inf_{M\in\mathcal{M}_k^0}
{\rm pr}\big( 
U^\T \tilde{V}^{-1/2} M \tilde{V}^{-1/2} U
 \geq 
 0
 \big)
=
{\rm pr}(R> 0)
D_0(\tilde{V},P^U)
.
\nonumber
\end{eqnarray}
Combining with~(\ref{fsgt}), we obtain
$$
D_{\theta_0}(V,P^X)
=
(1-{\rm pr}_{P^X}[\{\theta_0\}])
D_0(
W_0
,
P^{U}
)
,
$$
which establishes Part~(i) of the result. 
(ii) Assume that~$P=P^X$ is bivariate standard normal
and fix~\mbox{$V\in\mathcal{P}_{2,{\rm tr}}$}.  We aim at evaluating
\begin{equation}
	\label{inftocomputepre25}	
D_{0}(V,P^X)
=
 \inf_{M\in\mathcal{M}_k^0}
{\rm pr}\big( 
X^\T V^{-1/2} M V^{-1/2} X
 \geq 
 0
 \big)
;
\end{equation}
see~(\ref{inftocomputepre}). To do so, it will be convenient to parametrise~$V$ and the matrix~$M$ as
$$
V
=
\bigg(
\begin{array}{cc}
1+v_1 & v_2 \\[1mm]
v_2 & 1-v_1	
\end{array}
\bigg)
\quad
\textrm{ and }
\quad
M
=
m_1
\bigg(
\begin{array}{cc}
1 & m_2 \\[1mm]
m_2 & -1	
\end{array}
\bigg)
,
$$
with~$v_1^2+v_2^2<1$ and~$m_1\neq 0$. Indeed,~$m_1=0$ makes the probability in~(\ref{inftocomputepre25}) equal to one, which cannot be the infimum. 
Decomposing~$V^{-1/2}MV^{-1/2}$ into~$O\Lambda O^\T$, where~$O$ is a $2\times 2$ orthogonal matrix and where~$\Lambda={\rm diag}\{\lambda_1(V^{-1}M),\lambda_2(V^{-1}M)\}$, with~$\lambda_1(V^{-1}M)\geq \lambda_2(V^{-1}M)$, involves the eigenvalues of~$V^{-1}M$ or, equivalently, of~$V^{-1/2}MV^{-1/2}$, we have
\begin{equation}
	\label{inftocompute}
D_0(V,P)
=
 \inf_{(m_1,m_2)\in\R_0\times \R}
{\rm pr}\big\{ 
\lambda_1(V^{-1}M) X_1^2 +\lambda_2(V^{-1}M) X_2^2  \geq  0
 \big\}
,
\end{equation}
where~$X=(X_1,X_2)^\T$ is still bivariate standard normal. 
%
%
%
%
%
%
%
Since~$\lambda_1(-V^{-1}M)=-\lambda_2(V^{-1}M)$ for any~$M\in\mathcal{M}_k^0$, we have
\begin{eqnarray}
\lefteqn{
D_0(V,P)
=
\min
\bigg[
 \inf_{(m_1,m_2)\in\R^+_0\times \R}
{\rm pr}\big\{ 
\lambda_1(V^{-1}M) X_1^2 +\lambda_2(V^{-1}M) X_2^2  \geq  0
 \big\}
,
}
	\label{inftocompute2}
\\
& & 
\hspace{38mm} 
 \inf_{(m_1,m_2)\in\R^+_0\times \R}
{\rm pr}\big\{ 
\lambda_1(V^{-1}M) X_1^2 +\lambda_2(V^{-1}M) X_2^2  \leq  0
 \big\}
\bigg]
,
\nonumber
\end{eqnarray}
which allows us to restrict to positive values of~$m_1$. We will show below that~$\lambda_2(V^{-1}M)<0<\lambda_1(V^{-1}M)$ for any~$M\in\mathcal{M}_k^0$. 
A direct computation shows that, for~$m_1>0$, 
$$
\lambda_1(V^{-1}M)
=
\frac{m_1}{\det V}
\,
\big[
-(v_1+m_2v_2)+\{(v_1+m_2v_2)^2+(1+m_2^2)\det V\}^{1/2}
\,\big]
>
0
$$
and
$$
\lambda_2(V^{-1}M)
=
\frac{m_1}{\det V}
\,
\big[
-(v_1+m_2v_2)-\{(v_1+m_2v_2)^2+(1+m_2^2)\det V\}^{1/2}
\,\big]
<
0
.
$$
Since~$f(m_2)=-\lambda_2(V^{-1}M)/\lambda_1(V^{-1}M)$ does not depend on~$m_1$, (\ref{inftocompute2}) leads to
\begin{eqnarray}
D_0(V,P)
& = &
\min
\bigg[
{\rm pr}\bigg\{ 
 X_1^2/X_2^2   \geq   \sup_{m_2\in\R} f(m_2)
 \bigg\}
,
{\rm pr}\bigg\{ 
 X_1^2/X_2^2   \leq   \inf_{m_2\in\R} f(m_2)
 \bigg\}
\bigg]
\nonumber
\\[2mm]
& = &
{\rm pr}\bigg[ 
 X_1^2/X_2^2   \geq  
\max\bigg\{
 \sup_{m_2\in\R} f(m_2)
 \ 
 ,
 \ 
 1\,/ \inf_{m_2\in\R} f(m_2)
 \bigg\}
 \bigg]
.
\label{keyy}
\end{eqnarray}
It is easy to check that~$f$ is differentiable over~$\R$ with a derivative of the form~$c_{v_1,v_2}(m_2) \linebreak (v_2-v_1 m_2)$, where~$c_{v_1,v_2}(m_2)>0$ for any~$m_2$, and that
$$
f(\pm\infty)
=
\lim_{m_2\to \pm\infty} f(m_2)
=
\frac{(1-v_1^2)^{1/2}\pm v_2}{(1-v_1^2)^{1/2} \mp v_2}
\cdot
$$
We treat the cases~$v_1=0$ and~$v_1\neq 0$ separately.
\vspace{3mm}
 
 (a) Assume that~$v_1= 0$. If~$v_2=0$, then~$V=I_2$ and Theorem~\ref{TheorFishconst} establishes the result. If~$v_2\neq 0$, then~$f$ has no critical point and 
$$
 \sup_{m_2\in\R}
f(m_2)
=
\max\big\{
f(-\infty)
,
f(\infty)
\big\}
 =
\frac{1+|v_2|}{1-|v_2|}
$$
and
$$ 
\inf_{m_2\in\R}
f(m_2)
=
\min\big\{
f(-\infty)
,
f(\infty)
\big\}
=
\frac{1-|v_2|}{1+|v_2|}
,
$$
so that~(\ref{keyy}) yields
\begin{eqnarray*}
\lefteqn{
D_0(V,P)
=
{\rm pr}\Bigg( 
 \frac{X_1^2}{X_2^2}   \geq  
\frac{1+|v_2|}{1-|v_2|}
 \Bigg)
}
\\[2mm]
& &
\hspace{23mm} 
=
{\rm pr}\Bigg\{  
 \frac{X_1^2}{X_2^2}   \geq  
\frac{1+(1-\det V)^{1/2}}{1-(1-\det V)^{1/2}}
 \Bigg\}
=
{\rm pr}\bigg\{
Y_2
\geq
\frac{1}{2}
+
\frac{1}{2}
\big(
1- \det V 
\big)^{1/2} 
\bigg\}
 ,
\end{eqnarray*}
where we have used the fact that if~$Z$ has a $F(1,1)$ Fisher-Snedecor distribution, then~$Z/(1+Z)$ has a ${\rm Beta}(1/2,1/2)$ distribution. 
\vspace{3mm}
 
(b) Assume now that~$v_1\neq 0$. Then the only critical point of~$f$ is~$m_2^{\rm crit}=v_2/v_1$, so that, irrespective of the fact that this critical point is a local minimum/maximum of~$f$, 
\begin{eqnarray*}
 \sup_{m_2\in\R}
f(m_2)
&=&
\max\big\{
f(-\infty)
,
f(\infty)
,
f(m_2^{\rm crit})
\big\}
\\[2mm]
& =&
\max\Bigg\{
\frac{(1-v_1^2)^{1/2}+|v_2|}{(1-v_1^2)^{1/2}-|v_2|}
,
\frac{{\rm sign}(v_1)+(v_1^2+v_2^2)^{1/2}}{{\rm sign}(v_1)-(v_1^2+v_2^2)^{1/2}}
 \Bigg\}
\end{eqnarray*}
and
\begin{eqnarray*}
 \inf_{m_2\in\R}
f(m_2)
& =&
\min\big\{
f(-\infty)
,
f(\infty)
,
f(m_2^{\rm crit})
\big\}
\\[2mm]
& =&
\min\Bigg\{
\frac{(1-v_1^2)^{1/2}-|v_2|}{(1-v_1^2)^{1/2}+|v_2|}
,
\frac{{\rm sign}(v_1)+(v_1^2+v_2^2)^{1/2}}{{\rm sign}(v_1)-(v_1^2+v_2^2)^{1/2}}
 \Bigg\}
.
\end{eqnarray*}
Lemma~\ref{lemineq} yields
$$
 \sup_{m_2\in\R}
f(m_2)
=
\frac{(1-v_1^2)^{1/2}+|v_2|}{(1-v_1^2)^{1/2}-|v_2|}
\,
\mathbb{I}(v_1<0)
+
\frac{1+(v_1^2+v_2^2)^{1/2}}{1-(v_1^2+v_2^2)^{1/2}}
\,
\mathbb{I}(v_1>0)
$$
and
$$
 \inf_{m_2\in\R}
f(m_2)
=
\frac{-1+(v_1^2+v_2^2)^{1/2}}{-1-(v_1^2+v_2^2)^{1/2}}
\,
\mathbb{I}(v_1<0)
+
\frac{(1-v_1^2)^{1/2}-|v_2|}{(1-v_1^2)^{1/2}+|v_2|}
\,
\mathbb{I}(v_1>0)
,
$$
hence also
$$
\max\bigg\{
 \sup_{m_2\in\R} f(m_2)
 \ 
 ,
 \ 
 1\,/ \inf_{m_2\in\R} f(m_2)
 \bigg\}
=
\frac{1+(v_1^2+v_2^2)^{1/2}}{1-(v_1^2+v_2^2)^{1/2}}
=
\frac{1+(1-\det V)^{1/2}}{1-(1-\det V)^{1/2}}
\cdot
$$
Therefore, (\ref{keyy}) finally provides
$$
D_0(V,P)
=
{\rm pr}\Bigg\{  
 \frac{X_1^2}{X_2^2}   \geq  
\frac{1+(1-\det V)^{1/2}}{1-(1-\det V)^{1/2}}
 \Bigg\}
=
{\rm pr}\bigg\{
Y_2
\geq
\frac{1}{2}
+
\frac{1}{2}
\big(
1- \det V 
\big)^{1/2} 
\bigg\}
.
$$
This proves the result for the case where~$P$ is bivariate standard normal. The general result then follows from Part~(i) of the theorem. 
\end{proof}
\vspace{3mm}


%
%
%
%
%
%

\begin{proof}[Proof of Theorem~\ref{Theorunifconst}]
(i) Let~$P$ and~$Q$ be two probability measures over~$\R^k$
and fix~$V\in\mathcal{P}_{k,{\rm tr}}$. Fix~$\varepsilon>0$ and assume, without loss of generality, that~$D_\theta(V,P)\leq D_\theta(V,Q)$. Lemma~\ref{LemrewritingD} entails that there exists~$M_0\in\mathcal{M}^0_k$ such that
$
{\rm pr}_P\big( C^{M_0}_{\theta,V} \big)
\leq 
D_\theta(V,P) + \varepsilon
,      
$
where we still use the notation~$C^M_{\theta,V}
 =
 \big\{ x \in \R^{k}\setminus\{\theta\} : 
 (u_{\theta,V}^x)^\T M u_{\theta,V}^x 
 \geq  {\rm tr}(M)/k   
 \big\}$. 
Consequently, using Lemma~\ref{LemrewritingD} again,
\begin{eqnarray*}
\lefteqn{
\hspace{-0mm} 
|D_\theta(V,Q) - D_\theta(V,P)|
=
D_\theta(V,Q) - D_\theta(V,P)
}
\\[2mm]
& & 
\hspace{25mm} 
\leq 
{\rm pr}_Q\big( C^{M_0}_{\theta,V} \big)
-
{\rm pr}_P\big( C^{M_0}_{\theta,V} \big)
+
\varepsilon
\leq
\sup_{C\in \mathcal{C}_\theta} | {\rm pr}_Q(C)-{\rm pr}_P(C) | 
+ 
\varepsilon
,
\end{eqnarray*}
with~$\mathcal{C}_\theta=\{C^M_{\theta,V} : M\in\mathcal{M}^0_k, V\in\mathcal{P}_{k,{\rm tr}}\}$. Since this holds for any~$\varepsilon>0$ and for any~$V\in\mathcal{P}_{k,{\rm tr}}$, we have  
$$
\sup_{V\in\mathcal{P}_{k,{\rm tr}}} 
|D_\theta(V,Q) - D_\theta(V,P)|
\leq
\sup_{C\in \mathcal{C}_\theta} | {\rm pr}_Q(C)-{\rm pr}_P(C) | 
.
$$
It thus only remains to show that $\mathcal{C}_\theta$ is a Vapnik-Chervonenkis class. To do so, note that~$C^M_{\theta,V}
 =
 \big\{ x \in \R^{k}\setminus\{\theta\} : (x-\theta)^\T V^{-1/2} M V^{-1/2} (x-\theta)  \geq  0    \big\}
 $, so that~$\mathcal{C}_\theta\subset \{D_{\theta,A}\cap (\R^{k}\setminus\{\theta\}) : A\in \mathcal{M}_k^{\rm all} \}$, with
 $
D_{\theta,A} 
=
 \big\{ x \in \R^{k}: (x-\theta)^\T A (x-\theta)  \geq  0 \big\}
$.  Theorem~4.6 from \cite{Dud2014} implies that~$\{D_{\theta,A}: A\in \mathcal{M}_k^{\rm all} \}$ is a Vapnik-Chervonenkis class $\mathcal{D}_\theta$. It then follows from Lemma~2.6.17(ii) in \cite{WelVan1996} that~$\{D_{\theta,A}\cap (\R^{k}\setminus\{\theta\}) : A\in \mathcal{M}_k^{\rm all} \}$, hence also~$\mathcal{C}_\theta$, is a Vapnik-Chervonenkis class. 
(ii) The proof is long and technical, but follows along the same lines as the proof of Theorem~2.2 in \cite{PVB17}, hence is omitted for the sake of brevity. 
\end{proof}
   \vspace{0mm}

\begin{proof}[Proof of Theorem~\ref{Theormaxdepthconst}]
(i) Recall from~(\ref{barydef}) that~$V_{\theta,P}$ is defined as the barycentre of $R_\theta(\alpha_*,P)$, with~$\alpha_*=\max_V D_\theta(V,P)$. The mapping~$V\mapsto D_\theta(V,P)$ is upper semicontinuous (Theorem~\ref{Theorcontinuity}) and constant over~$R_\theta(\alpha_*,P)$. Clearly, it is easy to define a mapping~$V\mapsto \tilde{D}_\theta(V,P)$ that is upper semicontinuous, agrees with~$V\mapsto D_\theta(V,P)$ in the complement of~$R_\theta(\alpha_*,P)$, and for which~$V_{\theta,P}$ is the unique maximizer. By using Theorem~\ref{Theorunifconst}, it follows from Theorem~2.12 and Lemma~14.3 in \cite{Kos2008} that $d(V_{\theta,P_n},V_{\theta,P}) \to 0$ almost surely as~$n\to\infty$. Part~(i) of the result then follows from the fact that, in neighbourhoods of the form~$\{V: d(V,V_{\theta,P})<\varepsilon\}$, there exists a constant~$C=C_\varepsilon$ such that~$d_F(V,V_{\theta,P})<C d(V,V_{\theta,P})$, where~$d_F$ is the Frobenius distance.
(ii) The proof is entirely similar, hence is omitted. 
\end{proof} 
   \vspace{0mm}

\begin{proof}[Proof of Theorem~\ref{dimD}]
Let $L_{\theta,V}=U_{\theta,V} U^\T_{\theta,V} - (1/k) I_k$. Since
$(L_{\theta,V})_{11}=-\sum_{\ell=2}^k (L_{\theta,V})_{\ell\ell}$, 
there exists a $(d_k+1)\times d_k$ full-rank matrix~$H_0$ such that
$
{\rm vech}( L_{\theta,V})
=
H_0\, {\rm vech}_0( L_{\theta,V})
.
$ 
Therefore, there exists a $k^2\times d_k$ full-rank matrix~$H$ such that
$
W_{\theta,V}
=
{\rm vec}( L_{\theta,V})
=
H
\tilde{W}_{\theta,V}
$.
One can, for example, take~$H=DH_0$, where~$D$ is the usual duplication matrix.  
It follows that
\begin{eqnarray*}
\lefteqn{
D_\theta(V,P)
=
D(0,P^{W_{\theta,V}})
=
\inf_{u\in\R^{k^2}} {\rm pr}\big(u^\T W_{\theta,V} \geq 0\big)
}
\\[2mm]
& & 
\hspace{13mm} 
=
\inf_{u\in\R^{k^2}} {\rm pr}\big\{(H^\T u)^\T \,\tilde{W}_{\theta,V} \geq 0\big\}
=
\inf_{v\in\R^{d_k}} {\rm pr}\big(v^\T \tilde{W}_{\theta,V} \geq 0\big)
=
D(0,P^{\tilde{W}_{\theta,V}})
,
\end{eqnarray*}
where we used the fact that~$H^\T$ has full column rank.
\end{proof}


\bibliographystyle{rss}
\bibliography{Paper.bib}


\begin{figure}[h!]
\begin{center}
\includegraphics[width=\textwidth]{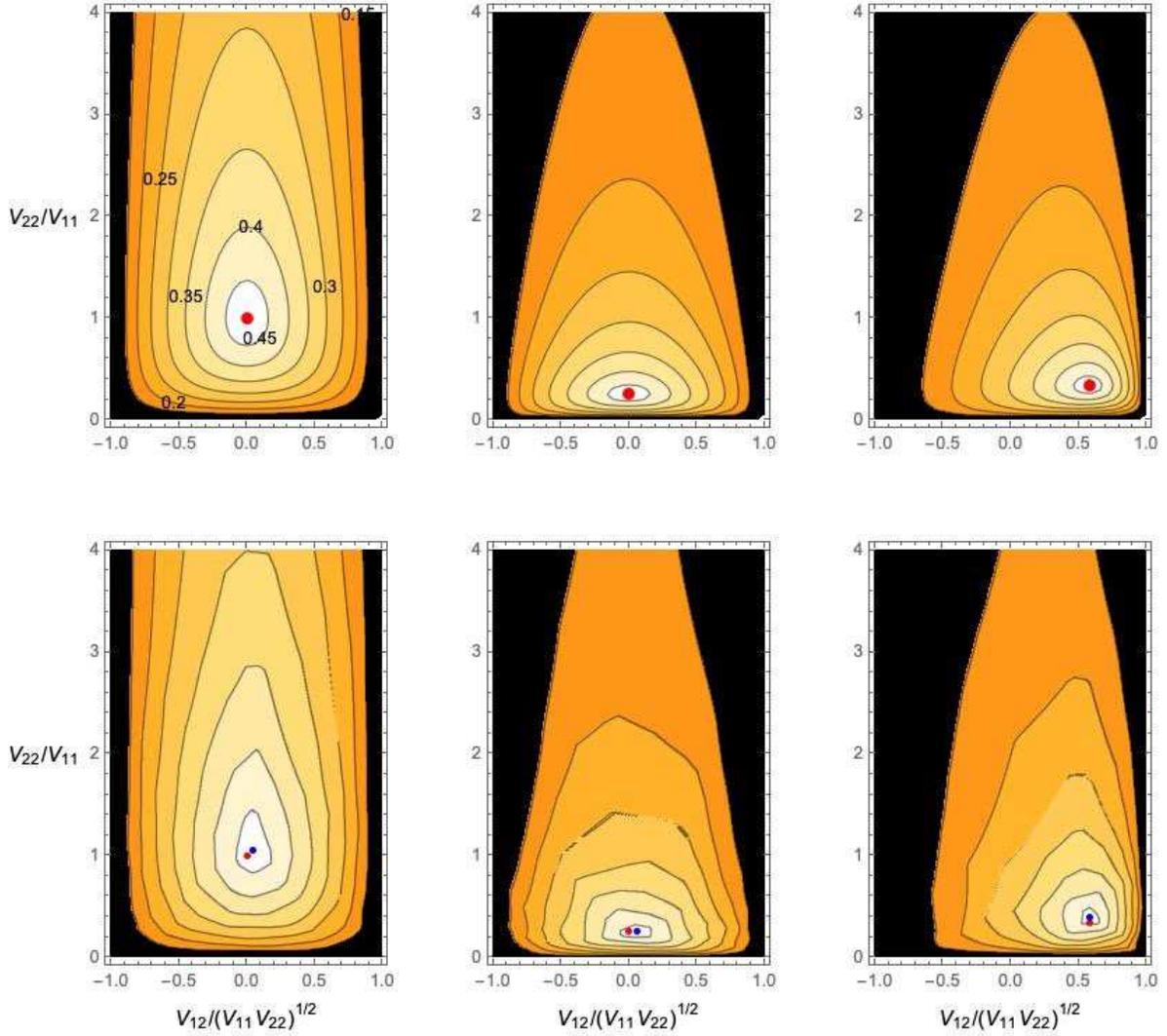}
\vspace{-6mm}
\end{center}
\caption{
(First row:) Contour plots of~$D_\theta(V,P)$ in terms of~$V_{12}/(V_{11}V_{22})^{1/2}$ and~$V_{22}/V_{11}$, where~$P$ refers to an arbitrary bivariate elliptical probability measure with location~$\theta$ and with shape~$V_A={\rm diag}(1,1)$ (left), $V_B= {\rm diag}(1.6,0.4)$ (center), or $V_C$ with diagonal vector $(1.5,0.5)^T$ and off-diagonal elements $0.5$ (right), 
\vspace{.3mm}
 that is so that~${\rm pr}[\{\theta\}]=0$. (Second row:) The corresponding contour plots of~$D_0(V,P_n)$, where~$P_n$ is the empirical probability measure associated with a random sample of size~$n=800$ from the centered bivariate normal with shape~$V_A$ (left), $V_B$ (center), and $V_C$ (right). The true shapes~$V_{0,P}$ and sample deepest shapes~$V_{0,P_n}$ are marked in red and in blue, respectively.} 
\label{Fig1}  
\end{figure}


\begin{figure}[h]
\includegraphics[width=\textwidth]{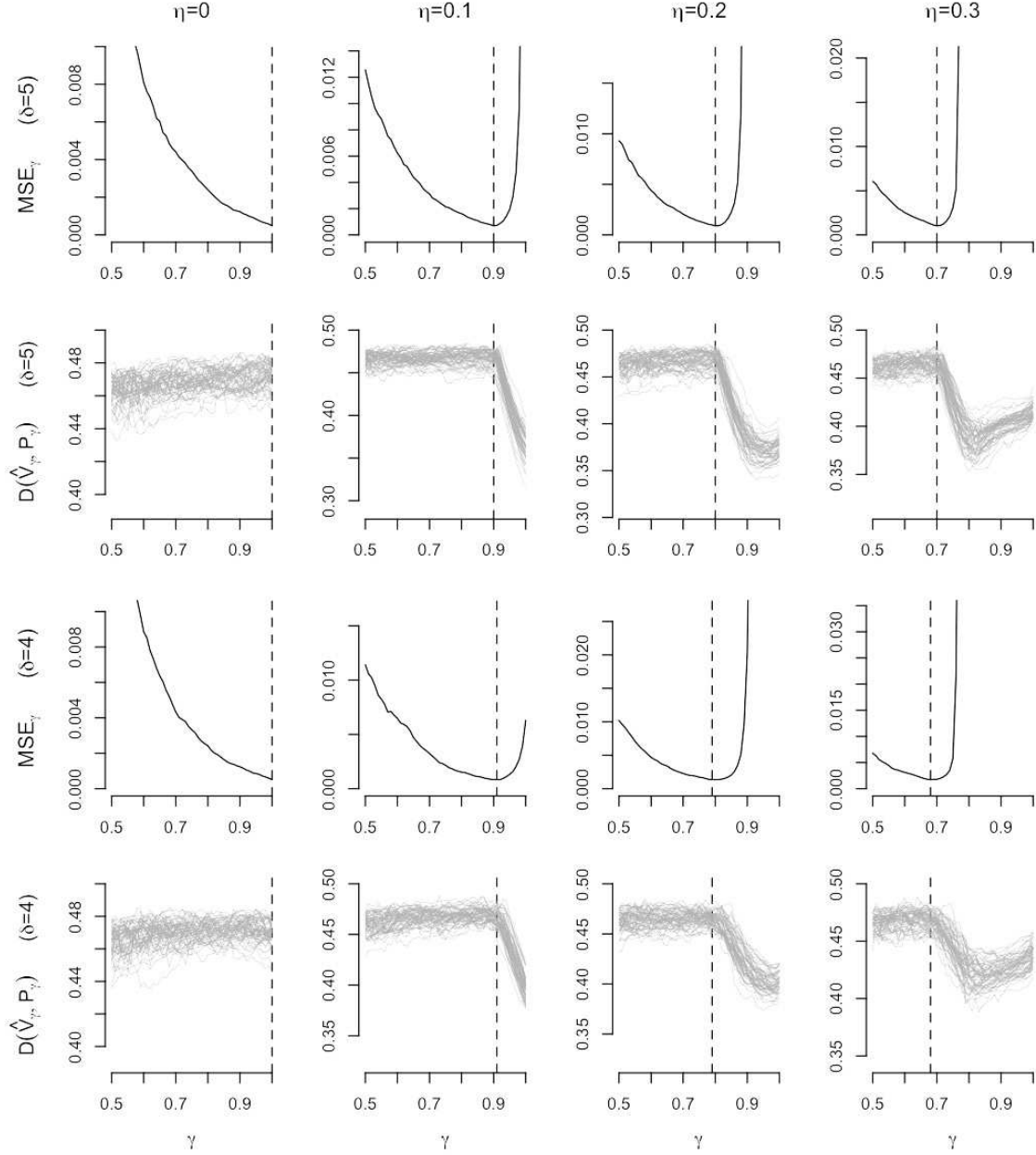}
\vspace{-2mm}
\caption{
(First row:) Plot of the mapping~$\gamma\mapsto \textrm{MSE}_\gamma$ in the easy case~$\delta=5$
\vspace{-.7mm}
  and for contamination proportions~$\eta\in\{0,0.1,0.2,0.3\}$. 
(Second row). Plot of $50$ random curves~$\mathcal{C}=\{(\gamma, D(\hat{V}_\gamma,P_{\gamma,n})):\gamma\in[0.5,1]\}$, still for~$\delta=5$ and for the same contamination proportions.  
(Third and fourth rows:) The corresponding plots for the harder case associated with~$\delta=4$; see $\S$~\ref{Secapplis} for details. Each panel shows a vertical line at~$\gamma_0=\arg\min_\gamma \textrm{MSE}_\gamma$.
} 
\label{Fig2} 
\end{figure}


\begin{figure}[h]
\includegraphics[width=\textwidth]{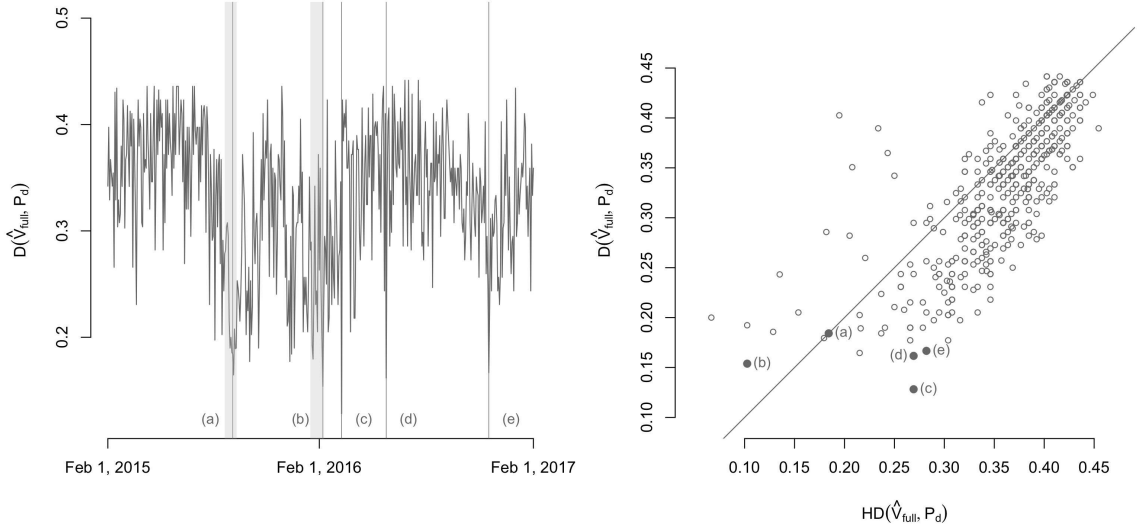}
\caption{
(Left:) Plot of $D(\hat V_{\rm full},P_d)$ as a function of $d$. Events (a) to (e) are described in $\S$~\ref{SecAppliOutlier}. 
(Right:) Plot of $D(\hat V_{\rm full},P_{d})$ vs $H\!D(\hat V_{\rm full},P_d)$ for each trading day~$d$. Events from the left panel are highlighted using the same colour.
} 
\label{Fig3}  
\end{figure}


\begin{figure}[h!]
\begin{center} 
\includegraphics[width=\textwidth]{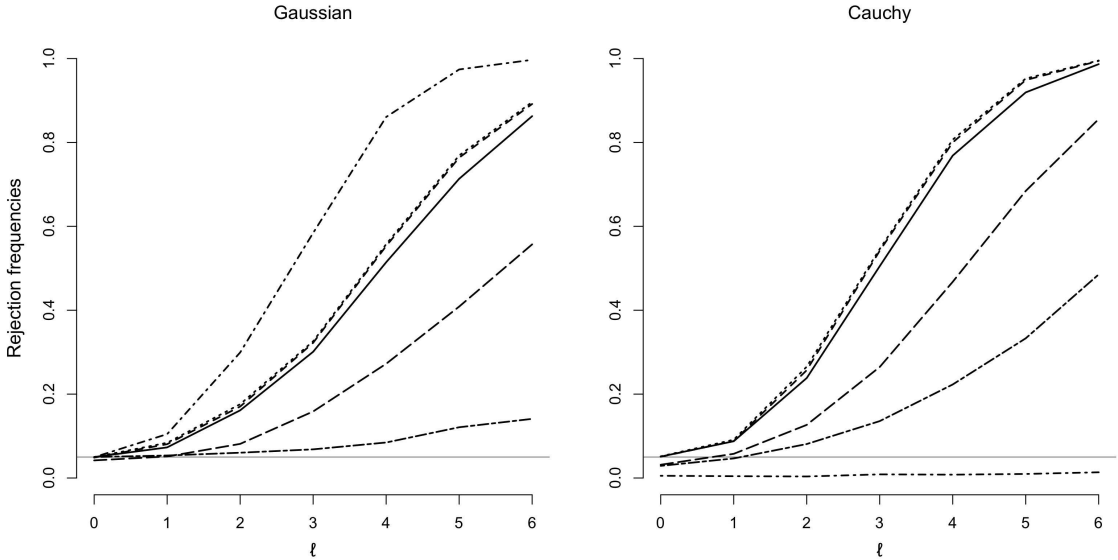}
\vspace{-6mm} 
\end{center}
\caption{
Rejection frequencies, under bivariate normal (left) and elliptical Cauchy (right) densities, of six tests of sphericity: the Gaussian test (dot-dashed curve), the sign test (dotted curve), the test based on Tyler's scatter matrix (dashed curve), the depth-based test (solid curve), and two minimum covariance determinant-based tests based on different trimming proportions (long dashed and short-long dashed curves for trimming proportion~$0.2$ and~$0.5$, respectively). Results are based on $3,\!000$ replications and the sample size is $n=500$. See $\S$~\ref{Sectest} for details.
} 
\label{Fig4}   
\end{figure}


\begin{figure}[h]
\includegraphics[width=\textwidth]{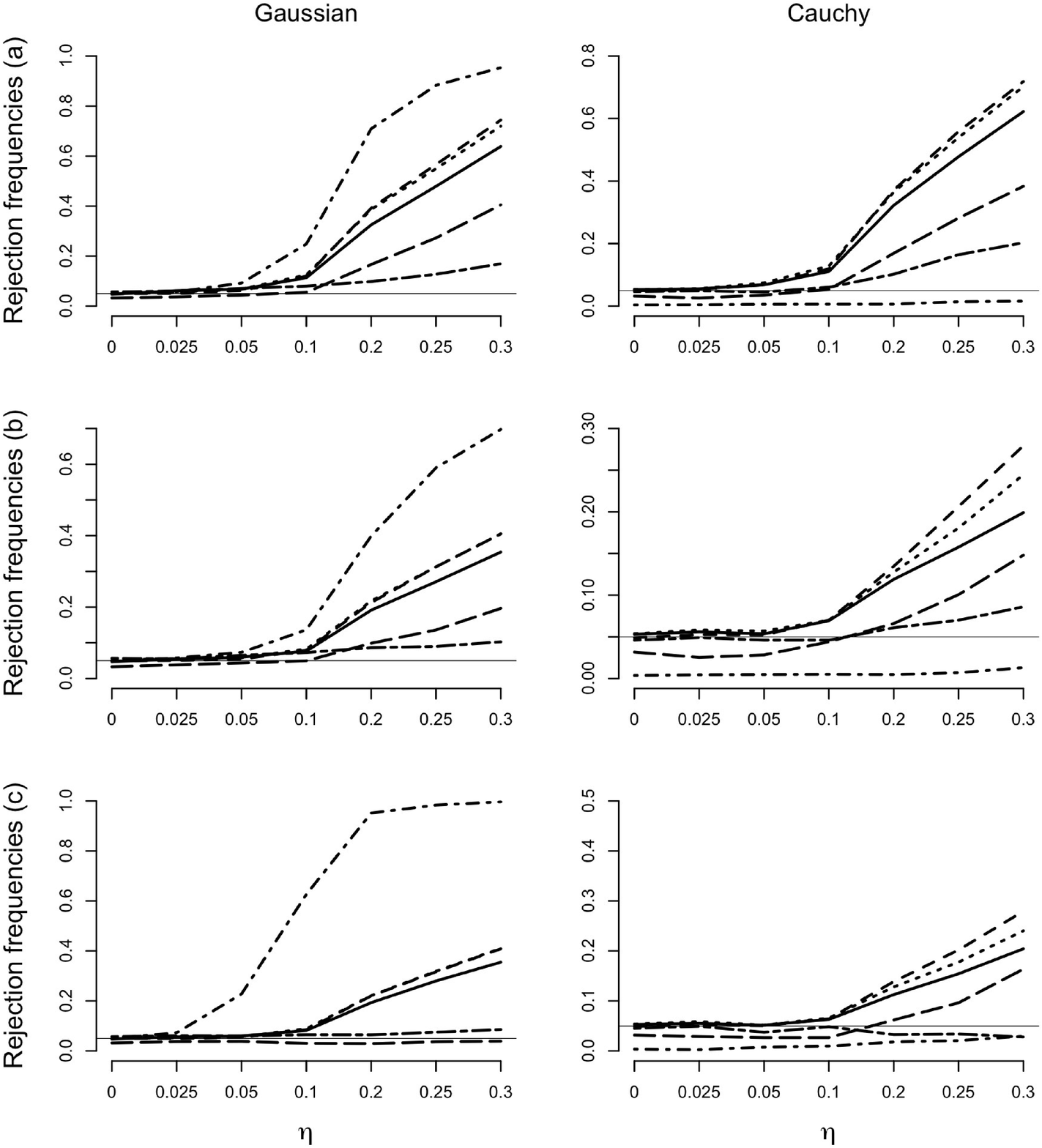}
\caption{
Null rejection frequencies, as a function of the contamination level~$\eta$, of the same six tests (using the same line types) as in Figure~\ref{Fig4}, under bivariate normal (left) and elliptical Cauchy (right) densities. The labels~(a)--(c) refer to the three contaminations patterns considered; see $\S$~\ref{Sectest} for details. Results are based on $3,\!000$ replications and the sample size is $n=200$. 
} 
\label{Fig5}  
\end{figure}


\end{document}